\newtheorem{thm}{Theorem}[section]
\newtheorem{lem}{Lemma}[section]
\newtheorem{cor}{Corollary}[section]
\newtheorem{assumption}{Assumption}[section]
\newtheorem{remark}{Remark}[section]
\numberwithin{equation}{section}
\begin{document}


\title{\textbf{\LARGE{Asymptotic Properties of Bayes Risk of a General Class of Shrinkage Priors in Multiple Hypothesis Testing Under Sparsity}}}
\author[1]{Prasenjit Ghosh}
\author[2]{Xueying Tang}
\author[2]{Malay Ghosh}
\author[1]{Arijit Chakrabarti}
\affil[1]{Applied Statistics Unit, Indian Statistical Institute, Kolkata, India}
\affil[2]{Department of Statistics, University of Florida, Gainesville, Florida, USA}

\renewcommand\Authands{ and }

\date{}

\maketitle


\footnotetext[1]{
 Indian Statistical Institute, Kolkata, India
 \href{mailto:prasenjit$\_$r@isical.ac.in}{prasenjit$\_$r@isical.ac.in}
}
\footnotetext[2]{
 University of Florida, Gainesville, Florida, USA
 \href{mailto:xytang@stat.ufl.edu}{xytang@stat.ufl.edu}
}
\footnotetext[2]{
 University of Florida, Gainesville, Florida, USA
 \href{mailto:ghoshm@stat.ufl.edu}{ghoshm@stat.ufl.edu}
}
\footnotetext[1]{
 Indian Statistical Institute, Kolkata, India
 \href{mailto:arc@isical.ac.in}{arc@isical.ac.in}
}

\begin{abstract} Consider the problem of simultaneous testing for the means of independent normal observations. In this paper, we study some asymptotic optimality properties of certain multiple testing rules induced by a general class of one-group shrinkage priors in a Bayesian decision theoretic framework, where the overall loss is taken as the number of misclassified hypotheses. We assume a two-groups normal mixture model for the data and consider the asymptotic framework adopted in \cite{BCFG2011} who introduced the notion of asymptotic Bayes optimality under sparsity in the context of multiple testing. The general class of one-group priors under study is rich enough to include, among others, the families of three parameter beta, generalized double Pareto priors, and in particular the horseshoe, the normal-exponential-gamma and the Strawderman-Berger priors. We establish that within our chosen asymptotic framework, the multiple testing rules under study asymptotically attain the risk of the Bayes Oracle up to a multiplicative factor, with the constant in the risk close to the constant in the Oracle risk. This is similar to a result obtained in \cite{DG2013} for the multiple testing rule based on the horseshoe estimator introduced in \cite{CPS2009, CPS2010}. We further show that under very mild assumption on the underlying sparsity parameter, the induced decision rules based on an empirical Bayes estimate of the corresponding global shrinkage parameter proposed by \cite{PKV2014}, attain the optimal Bayes risk up to the same multiplicative factor asymptotically. We provide a unifying argument applicable for the general class of priors under study. In the process, we settle a conjecture regarding optimality property of the generalized double Pareto priors made in \cite{DG2013}. Our work also shows that the result in \cite{DG2013} can be improved further.
\end{abstract}

\section[Introduction]{Introduction}
Multiple hypothesis testing has become a topic of growing importance in statistics, particularly for the analysis of high-dimensional data. Its application extends over various scientific fields such as genomics, bio-informatics, medicine, economics, finance, just to name a few. For example, in microarray experiments, thousands of tests are performed simultaneously to identify the differentially expressed genes, that is genes whose expression levels are associated with some biological trait of interest. Microarray experiment is just one out of many examples where one needs to analyze sparse high-dimensional data, the main objective being detection of a few signals amidst a large body of noises. Multiple hypothesis testing is one convenient and fruitful approach towards this end. The biggest impetus to research in multiple hypothesis testing came from the classic paper of \cite{BH1995}. Since then the topic has received considerable attention from both frequentists and Bayesians.\vspace{1.5mm}

In this paper, we consider simultaneous testing for means of independent normal observations. Suppose we have $m$ independent observations $X_1 , \cdots , X_m ,$ such that $X_i {\sim} N(\mu_i,\sigma^2),$ for $i=1,\ldots,m$. The unknown parameters $\mu_1, \cdots, \mu_m $ represent the effects under investigation, while $\sigma^2$ is the variance of the random noise. We wish to test $H_{0i}: \mu_i =0 $ against $H_{1i}: \mu_i \neq 0 $, for $i=1,\ldots,m.$ Our focus is on situations when $m$ is large and the fraction of non-zero $\mu_i$'s is small. For each $i$, $\mu_i$ is assumed to be a random variable whose distribution is determined by the latent binary random variable $\nu_i$, where $\nu_i=0$ denotes the event that $H_{0i}$ is true while $\nu_i=1$ corresponds to the event that $H_{0i}$ is false. Here $\nu_i$'s are assumed to be i.i.d $\mbox{Bernoulli}(p)$ random variables, for some $p$ in $(0,1)$. Under $H_{0i}$, $\mu_i = 0$ i.e. $\mu_i \sim \delta_{\{0\}}$, the distribution having mass 1 at 0, while under $H_{1i}$, $\mu_i \neq 0$ and it is assumed to follow a $N(0,\psi^2)$ distribution with $\psi^2 > 0$. Thus
 \begin{equation}\label{TWO_GROUP_MU}
  \mu_i \stackrel{i. i. d.}{\sim} (1-p)\delta_{\{0\}} + p N(0,\psi^2), \mbox{ $i=1,\ldots,m.$}
   \end{equation} 
The marginal distributions of the $X_i$'s are then given by the following two-groups model:
 \begin{equation}\label{TWO_GROUP_X}
  X_i \stackrel{i. i. d.}{\sim} (1-p)N(0,\sigma^2) + p N(0,\sigma^2+\psi^2), \mbox{ $i=1,\ldots,m.$}
   \end{equation} 
Our testing problem is now equivalent to testing simultaneously
\begin{equation}\label{TO_TEST}
 H_{0i}: \nu_i=0  \mbox{ versus } H_{1i}: \nu_i=1 \mbox{ for } i=1,\ldots,m.
\end{equation}
It is assumed that $p$, $\psi^2$ and $\sigma^2$ depend on the number of hypotheses $m$. The parameter $p$ is the theoretical proportion of non-nulls in the population. In sparse situations, where most of the $\mu_i$'s are zero or very small in magnitude, it is natural to assume that $p$ is small and converges to $0$ as the number of hypotheses $m$ tends to infinity. The variance component $\psi^2$ is typically assumed to be large to identify the true signals. Such a model is very natural where one has few potentially large signals among a large pool of noise terms and has been very popular in the literature. See, for example, \citet{MB1988}, for an early use of modeling of this kind in Bayesian variable selection where a uniform prior is used for the absolutely continuous part in place of the normal prior as in (\ref{TWO_GROUP_MU}) above. The two-groups model has the advantage of capturing information across different tests through learning about the common hyperparameters based on information from all the data points. Fully Bayesian approaches towards multiple testing based on the two-groups model by placing hyperpriors on the underlying model parameters are available in the literature, see, for example, \cite{SB2006} and \cite{BGT2008}. Empirical Bayes approaches using the two-groups formulation have been considered, for example, in \cite{Efron2004, Efron2008}, \cite{Storey2007} and \cite{BGT2008}, just to name a few. Under model (\ref{TWO_GROUP_X}) and the usual additive loss function, \cite{BCFG2011} provided conditions under which the optimal Bayes risk (that is, the risk corresponding to the Bayes rule) can be attained asymptotically under sparsity by a multiple testing procedure as the number of tests grows to infinity. They referred to this property as Asymptotic Bayes Optimality under Sparsity (ABOS). In particular, they showed that the procedures of \cite{BH1995} and Bonferroni attain the ABOS property under mild conditions. The optimal Bayes rule is also referred to as a Bayes Oracle in \cite{BGT2008} and \cite{BCFG2011} and will be discussed in detail further in Section 3.\vspace{1.5mm}

In contrast to the above two-groups formulation, there are proposals to model the unknown parameters in sparse situations through hierarchical one-group ``shrinkage'' priors. Such priors can be expressed as scale-mixtures of normals and their use require substantially less computational effort than the two-groups model, especially, in high-dimensional problems as well as in complex parametric frameworks. These priors capture sparsity by assigning large probabilities to means close to zero while at the same time they give non-trivial probabilities to large means. This is achieved by employing two levels of parameters to express the prior variances of the $\mu_i$'s, namely, the ``local shrinkage parameters'', which control the degree of shrinkage at the individual levels, and a ``global shrinkage parameter'', common for all the $\mu_i$'s to cause an overall shrinking effect. If the mixing density corresponding to the local shrinkage parameters is appropriately heavy tailed, the large observations are left almost unshrunk which is often referred to as the ``tail robustness'' property. Choice of the global shrinkage parameter varies in different specifications and will be discussed in greater detail in Section 2.\vspace{1.5mm}

Some early examples of one-group shrinkage priors are the $t$-prior (\cite{Tipping2001}), the Laplace prior in the context of Bayesian Lasso (\cite{PC2008} and \cite{Hans2009}) and the family of normal-exponential-gamma priors (\cite{GB2005}). More recently, \cite{CPS2009, CPS2010} introduced a hierarchical Bayesian one-group prior called the horseshoe prior. Various new one-group shrinkage priors have been proposed in the literature and studied since then. \cite{ADC2011} introduced the class of ``three parameter beta normal'' mixture priors while the class of ``generalized double Pareto'' priors was introduced in \cite{ADL2012}. The family of three parameter beta normal mixture priors generalizes some well known shrinkage priors such as the horseshoe, Strawderman-Berger and normal-exponential-gamma priors. See also, \cite{PS2011, PS2012}, \cite{Scott2011} and \cite{GB2010, GB2012, GB2013}, in this context. Many of these one-group priors, including the horseshoe, employ local shrinkage parameters with priors having the aforesaid tail robustness property.\vspace{1.5mm}

The horseshoe prior has acquired an important place in the literature on ``shrinkage'' priors and it has been used in estimation as well as in multiple testing and variable selection problems. \citet{CPS2010} proposed a new multiple testing procedure for the normal means problem based on the horseshoe prior. They observed through numerical findings that under sparsity of the true normal means, the procedure based on the horseshoe prior performs closely to the Bayes rule when the true data comes from a two-groups model and the loss of a testing procedure is taken as the number of misclassified hypotheses. \citet{DG2013} theoretically established this optimality by showing that the ratio of the Bayes risk for this procedure to that of the Bayes Oracle under the two-groups model (\ref{TWO_GROUP_X}) is within a constant factor asymptotically. Moreover, it was numerically shown in their paper, that priors having exponential or lighter tails, such as the Laplace or the normal prior, fail to achieve such optimality property.\vspace{1.5mm}

As commented in \cite{CPS2009}, a carefully chosen two-groups model can be considered a ``gold standard'' for sparse problems. Therefore, it may be used as a benchmark against which the ``shrinkage'' priors can be judged. Motivated by this and inspired by the results in \cite{CPS2010} and \cite{DG2013}, we want to study in this paper asymptotic optimality properties of multiple testing procedures induced by a very general class of ``shrinkage'' priors which are heavy tailed and yet handle sparsity well. This class contains the ``three parameter beta normal'' mixture priors as well as the ``generalized double Pareto'' priors. We consider multiple testing rules based on these priors and apply them on data generated from a two-groups model. We establish that these rules achieve the same Bayesian optimality property as shown in \citet{DG2013} for the testing rule based on the horseshoe prior, assuming that the global shrinkage parameter is appropriately chosen based on the theoretical proportion of true alternatives. In case this proportion is unknown, we consider an empirical Bayes version of this test procedure, where the global shrinkage parameter is estimated using the data as in \cite{PKV2014}. We show that the resulting empirical Bayes testing procedure also attains the optimal Bayes risk asymptotically up to the same multiplicative factor. We also study the performance of such rules on simulated data and our theoretical results are corroborated by the simulations.\vspace{1.5mm}

The highlight of this paper is a unified treatment of the question of Bayesian optimality in multiple testing under sparsity based on a very general class of one-group priors, taking the same loss function as in \cite{DG2013}. In the process, we not only generalize their results for a very broad class of tail robust shrinkage priors, but also strengthen their optimality result by deriving a sharper asymptotic upper bound to the corresponding Bayes risk. We have a new unifying argument that enables us to establish asymptotic bounds to the risk for this whole class of priors. \cite{DG2013} conjectured that for the present multiple testing problem, the generalized double Pareto prior should enjoy similar optimality property like the horseshoe prior. We settle this conjecture by showing that the generalized double Pareto is indeed a member of this general class of tail robust priors under consideration. Further, our general technique of proof shows that some of the arguments in \cite{DG2013} can be simplified.\vspace{1.5mm}

The organization of this paper is as follows. In Section 2, we describe the general class of one-group priors under study and define the multiple testing procedure based on them. In Section 3, we present our main theoretical results after describing the optimal Bayes rule under the two-groups model and the asymptotic framework under which these theoretical results are derived. The main results of Section 3 crucially depend on some key inequalities involving the posterior distribution of the underlying shrinkage coefficients, that help us in deriving important asymptotic bounds to the type I and type II error probabilities. These inequalities and the bounds on both types of error probabilities are presented in Section 4. Section 5 contains the simulation results followed by a discussion in Section 6. Proofs of the theoretical results are given in the Appendix.

\subsection{Notations and Definition}
Given any two sequences of positive real numbers $\{a_m\}$ and $\{b_m\}$, with $b_m \neq 0$, we write $a_m \sim b_m $ to denote $\lim_{m \rightarrow \infty} a_m/b_m=1$. For any two sequences of real numbers $\{a_m\}$ and $\{b_m\}$, with $b_m \neq 0$, we write $a_m=O(b_m)$ if $|\frac{a_m}{b_m}| \leq M $ for all $m$, for some positive real number $M$ independent of $m$, and $a_m=o(b_m)$ to denote $\lim_{m\rightarrow\infty}a_m/b_m=0$. Thus $a_m=o(1)$ if $\lim_{m\rightarrow\infty}a_m=0$. Moreover, given any two positive real valued functions $f(x)$ and $g(x)$, both having a common domain of definition $(A,\infty)$, $A \geq 0$, we write $f(x) \sim g(x)$ as $x \rightarrow \infty$ to denote $\lim_{x \rightarrow \infty} f(x)/g(x)=1$.\vspace{1.5mm}

By a random variable $Z$ we mean a $N(0,1)$ random variable having cumulative distribution function and probability density function $\Phi(\cdot)$ and $\phi(\cdot)$, respectively.

\paragraph{Definition}
A positive measurable function $L$ defined over some $(A,\infty)$, $A \geq 0$, is said to be slowly varying or is said to vary slowly (in Karamata's sense) if for every fixed $ \alpha > 0$, $L(\alpha x)\sim L(x)$ as $x \rightarrow \infty$.


\section{The one-group priors and the corresponding induced multiple testing procedures} 
As mentioned in the introduction, our aim in this paper is to study, through theoretical investigations and simulations, asymptotic risk properties of the multiple testing rules induced by a very broad class of one-group shrinkage priors, when applied to data that come from the two-groups model in (\ref{TWO_GROUP_X}). The class of one-group priors we study is inspired by a class of priors suggested in \citet{PS2011} which can be represented through the following hierarchical formulation:
  \begin{eqnarray}
  \mu_i|(\lambda_i^2, \tau^2, \sigma^2) \mbox{ } &\sim& N(0,\lambda_i^2\tau^2\sigma^2), \mbox{ independently for } i=1,\ldots,m, \nonumber\\
  \lambda_i^2 \mbox{ } &\sim& \pi(\lambda_i^2), \mbox{ independently for } i=1,\ldots,m, \mbox{ and}, \nonumber\\
  (\tau^2,\sigma^2) \mbox{ } &\sim& \pi(\tau^2,\sigma^2) \nonumber. 
 \end{eqnarray}
Our specific choices of $\pi(\lambda_i^2)$ and $\pi(\tau^2,\sigma^2)$ are described and explained below. Note that the above hierarchy is in slight variation from that of \citet{PS2011} in that we bring the $\sigma$ earlier in the sequence, while in their formulation the $\sigma$ comes later through the conditional prior of $\tau$ given $\sigma$. But both formulations produce the same marginal prior distribution for the $\mu_i$'s.\vspace{1.5mm} 

The above one-group formulation is often referred to as a global-local scale mixtures of normals. The parameter $\tau$ is called a ``global'' shrinkage parameter, while the parameters $\lambda_i^2$'s are called the ``local'' shrinkage parameters. The corresponding posterior mean of $\mu_i$ is given by,
   \begin{equation} \label{POST_MEAN_OG}
   E(\mu_i|X_i,\tau,\sigma)=(1-E(\kappa_i|X_i,\tau,\sigma))X_i,
   \end{equation}
where $\kappa_i=1/(1+\lambda_i^2\tau^2)$ is called the $i$-th shrinkage coefficient. It is observed in \cite{CPS2010} and \cite{PS2011} that under the two-groups model (\ref{TWO_GROUP_X}), for large $\psi^2$, the posterior mean of $\mu_i$ can be approximated as
  \begin{equation}\label{POST_MEAN}
   E(\mu_i|X_i,p,\psi,\sigma) \approx \omega_i(X_i) X_i
  \end{equation}
where $\omega_i(X_i)$ denotes the posterior probability that $H_{1i}$ is true. It may be noted further that when $p \approx 0$, most of the $\omega_i$'s are expected to be very close to zero unless $X_i$ is sufficiently large, in which case the corresponding $\omega_i$ is expected to be close to 1, provided $\psi^2$ is large enough. This ensures that the noise observations are mostly shrunk towards zero, while the large $X_i$'s are left mostly unshrunk. Here the parameter $p$ is responsible for achieving an overall shrinkage, while the large $\psi^2$ is helpful in discovering the true signals.\vspace{1.5mm}

Using the above observations, for the one-group model, \cite{PS2011} argued that in sparse problems, the global shrinkage parameter $\tau$ (whose role is analogous to $p$ in the two-groups prior) should be small and its prior should have substantial mass near zero, whereas the prior for the local shrinkage parameters $\lambda_i^2$ should have thick tails. This ensures that the resulting prior for the $\mu$'s is highly peaked near zero but also heavy tailed enough to accommodate large signals. In this sense, the one-group priors can be thought of as approximately similar to a two-groups prior with an appropriately heavy-tailed absolutely continuous part.\vspace{1.5mm}

Motivated by the preceding discussion and the work of \cite{PS2011}, we take $\pi(\lambda_i^2)$ to be of the form 
  \begin{equation}\label{LAMBDA_PRIOR}
  \pi(\lambda_i^2)=K(\lambda_i^2)^{-a-1}L(\lambda_i^2),
    \end{equation}
in our hierarchical formulation. Here $K > 0$ is the constant of proportionality, $a$ is a positive real number and $L$ is a positive measurable, non-constant, slowly varying function over $(0,\infty)$. It follows from Theorem 1 of \cite{PS2011} that the above general class of one-group priors achieves the desired ``tail robustness'' property in the sense that for any given $\tau$ and $\sigma$, $E(\mu_i|X_i,\tau, \sigma) \approx X_i,$ for large $X_i$'s. Since $\pi(\lambda_i^2)$ is assumed to be proper, the possibility of $L(\cdot)$ being a constant function is ruled out.\vspace{1.5mm}

It will be proved in Section 2.2 and Section 2.3 that a very broad class of one-group priors, such as, the generalized double Pareto and the three parameter beta normal mixtures, actually fall inside the general class of shrinkage priors under consideration. It is worth pointing out here that in some one-group formulations, like the original form of the generalized double Pareto in \cite{ADL2012}, the global shrinkage parameter is not explicitly mentioned or equivalently it is kept fixed at 1. In some other cases, like the three parameter beta normal mixtures, a shared global shrinkage parameter is explicitly given. \cite{ADC2011} opined that it is reasonable to put a prior on the global shrinkage parameter, but this parameter may also be kept fixed at a certain value which reflects the prior knowledge about sparsity {\it if} such information is available. In case such prior knowledge is unavailable, one can consider either of the two approaches, namely, (i) a full Bayes approach by placing further hyperprior over $\tau$ and (ii) an empirical Bayes approach by learning about $\tau$ through the data. In the line of recommendation of \cite{PS2011}, for a full Bayes treatment of the present multiple testing problem, we consider the following joint prior distribution of $(\tau,\sigma)$,
 \begin{equation}\label{TAU_SIGMA_PRIOR}
   \tau \sim  C^{+}(0,1) \mbox{ and } \pi(\sigma) \mbox{ } \propto \frac{1}{\sigma}. 
 \end{equation}
which will be used later in our simulation study. It should be noted here that \cite{GEL2006} strongly recommended the use of a half-Cauchy (or more generally, a folded non-central-$t$) distribution as a prior for the global variance component $\tau$ in a hierarchical Bayesian formulation. Though, in his original recommendation, he suggested using a half-Cauchy prior $C^{+}(0,\sigma)$ for $\tau$ scaled by the error variance $\sigma^2$, we take $C^{+}(0,1)$ since the error variance term $\sigma^2$ appear earlier in our hierarchical formulation. We also consider an empirical Bayes approach to be discussed in detail shortly.\vspace{1.5mm}

We describe below the multiple testing rules considered in this paper. We first consider two rules (defined in (\ref{INDUCED_DECISION}) and (\ref{INDUCED_DECISION_EB}) below) for which asymptotic optimality results have been derived theoretically. For this we assume $\sigma^2$ to be known and equal to 1. For the first rule, we treat $\tau$ as a tuning parameter to be chosen freely depending on the value of $p$, while the second one is based on an empirical Bayes estimate of $\tau$. Note that a comparison between the expressions in (\ref{POST_MEAN_OG}) and (\ref{POST_MEAN}) for the posterior mean of $\mu_i$, together with the previous discussion, suggest that the posterior shrinkage weights $E(1-\kappa_i|X_i,\tau)$ based on tail robust shrinkage priors, should behave like the posterior inclusion probability $\omega_i(X_i)$ in the two-groups model. Using this observation, \cite{CPS2010} proposed a natural classification rule based on the posterior shrinkage weights under a symmetric 0-1 loss for the horseshoe prior. Borrowing the same idea, we consider the following multiple testing procedure based on our chosen class of tail-robust one-group shrinkage priors, given by:
  \begin{equation}\label{INDUCED_DECISION}
  \mbox{ reject } H_{0i} \mbox{ if } 1-E(\kappa_i|X_i,\tau) > 0.5, \mbox{ $i=1,\ldots,m$}.
 \end{equation}  

As mentioned in the introduction, \cite{DG2013} considered the multiple testing rule defined in (\ref{INDUCED_DECISION}) based on the horseshoe prior. They showed that it asymptotically attains the optimal Bayes risk up to a multiplicative factor. It will be seen later that the Oracle optimality property of the decision rule in (\ref{INDUCED_DECISION}) based on our general class of one-group priors, critically depends on appropriate choice of $\tau$ depending on $p$. This plays a significant role in the limiting value of the type II error measure and in controlling the rate of the overall contribution from type I error in the risk function. This is similar to the observations made in \cite{DG2013} for the above multiple testing rule based on the horseshoe prior.\vspace{1.5mm}

In a recent article, \cite{PKV2014} considered the problem of estimating an $m$-dimensional multivariate normal mean vector which is sparse in the nearly black sense, that is, the number of non-zero entries is of a smaller order than $m$ as $m\rightarrow\infty$. They modeled the mean vector through the horseshoe prior and estimated it by the corresponding posterior mean, namely, the horseshoe estimator. They showed that for suitably chosen $\tau$ depending on the proportion of non-zero elements of the mean vector, the horseshoe estimator asymptotically attains the corresponding minimax $l_2$ risk, possibly up to a multiplicative constant, and the corresponding posterior distribution contracts at this optimal rate. But in practice $p$ is usually unknown. A natural approach in such situations is to learn about $\tau$ from the data and then plug this choice into the corresponding posterior mean. When $p$ is unknown, \cite{PKV2014} proposed a natural estimator of $\tau$ and showed that the horseshoe estimator based on this estimate, attains the corresponding minimax $l_2$ risk up to some multiplicative factor. Inspired by this, we consider the following estimator of $\tau$ due to \cite{PKV2014} in case $p$ is unknown:
\begin{equation}\label{TAU_HAT_EMPB}
 \widehat{\tau}= \max\bigg\{\frac{1}{m}, \frac{1}{c_2m}\sum_{j=1}^{m}1\{|X_j|> \sqrt{c_1\log m}\}\bigg\}
\end{equation}
where $c_1 \geq 2$ and $c_2 \geq 1$ are some predetermined finite positive constants. Note that the above estimator of $\tau$ is truncated below by $\frac{1}{m}$ and hence it is not susceptible to collapsing to zero, which is a major concern for the use of such empirical Bayes approaches as mentioned in \cite{CPS2009}, \cite{SB2010}, \cite{BGT2008} and \cite{DG2013}. We refer to \cite{PKV2014} for a detailed discussion on this point. Let $E(1-\kappa_i|X_i, \widehat{\tau})$ denote the posterior shrinkage weight $E(1-\kappa_i|X_i,\tau)$ evaluated at $\tau=\widehat{\tau}$. We consider the following empirical Bayes procedure based on $E(1-\kappa_i|X_i, \widehat{\tau})$, $i=1,\ldots,m$, given by,
\begin{equation}\label{INDUCED_DECISION_EB}
\mbox{ reject } H_{0i} \mbox{ if } 1-E(\kappa_i|X_i,\widehat{\tau}) > 0.5, \mbox{ $i=1,\ldots,m$}.
\end{equation}

For the simulations we consider two cases, firstly, a full Bayes treatment with $(\tau,\sigma)$ given the joint prior distribution as in (\ref{TAU_SIGMA_PRIOR}), and the corresponding rule is defined as
\begin{equation}\label{INDUCED_DECISION_FB}
  \mbox{ reject } H_{0i} \mbox{ if } 1-E(\kappa_i|X_1,\cdots,X_m) > 0.5, \mbox{ $i=1,\ldots,m$}.
\end{equation}
where $1-E(\kappa_i|X_1,\cdots,X_m)$ denotes the $i$-th posterior shrinkage weight after integrating $E(1-\kappa_i|X_i,\tau,\sigma)$ with respect to the joint posterior density of $(\tau,\sigma)$. We also consider the empirical Bayes decisions as in (\ref{INDUCED_DECISION_EB}) for the simulation study where we fix $\sigma^2=1$. We apply the above decision rules in (\ref{INDUCED_DECISION_FB}) or (\ref{INDUCED_DECISION_EB}) induced by these priors in the multiple testing problem (\ref{TO_TEST}), where the true data are generated from the two-groups mixture model (\ref{TWO_GROUP_X}) described before. We show in this paper, through theoretical analysis and simulations that the aforesaid decision rules enjoy similar optimality property as shown for the horseshoe prior in \cite{DG2013}.

\subsection{Some well known one-group shrinkage priors}
In this section, we demonstrate that some popular shrinkage priors actually fall within the general class of one-group priors considered in this paper. This follows from observing that the mixing density $\pi(\lambda_i^2)$ corresponding to the local shrinkage parameter $\lambda_i^2$ can be expressed in the form (\ref{LAMBDA_PRIOR}) where $L(\cdot)$ is a slowly varying function over $(0,\infty)$. This in turn can be shown by proving that the corresponding $L(t)$ converges to a finite positive limit as $t$ goes to infinity. We also show that for each of these priors the corresponding $L(\cdot)$ is uniformly bounded by some finite positive constant. The boundedness property of $L(\cdot)$ is important, as it makes the proofs of the theoretical results of this paper much simpler. This will become clear in Section 4 of this paper.

\subsection{Three Parameter Beta Normal Mixtures}
 Let us consider the following global-local scale mixture formulation of one-group priors:
 \begin{eqnarray}
 \mu_i \mid \lambda_i^2, \tau^2,\sigma^2 \mbox{ } &\sim& \mbox{ } N(0,\lambda_i^2\tau^2\sigma^2)\mbox{ independently for }i=1,\ldots,m\nonumber \\
 \lambda_i^2 \mbox{ } &\sim& \mbox{ } \pi(\lambda_i^2) \mbox{ independently for }i=1,\ldots,m\nonumber\\
 (\tau^2,\sigma^2) &\sim& \pi(\tau^2,\sigma^2)\nonumber
 \end{eqnarray}
 with
 \begin{equation}
 \pi(\lambda_i^2)= \frac{\Gamma(\alpha+\beta)}{\Gamma(\alpha)\Gamma(\beta)}(\lambda_i^2)^{\alpha-1}\big(1+\lambda_i^2\big)^{-(\alpha+\beta)} \label{TPB}
 \end{equation}
 for $\alpha > 0,$ $\beta>0$. The mixing density given in (\ref{TPB}), in fact, corresponds to an inverted-beta density (or, beta density of the second kind) with parameters $\alpha$ and $\beta$. The prior density corresponding to the shrinkage coefficients $\kappa_i=\frac{1}{1+\lambda_i^2\tau^2}$ is then given by,
 \begin{eqnarray}
  \pi(\kappa_i)
  &=& \frac{\Gamma(\alpha+\beta)}{\Gamma(\alpha)\Gamma(\beta)} (\tau^2)^{\beta}\kappa_i^{\beta-1}(1-\kappa_i)^{\alpha-1} \bigg \{1-\big(1-\tau^2\big) \kappa_i \bigg\}^{-(\alpha+\beta)}\nonumber
 \end{eqnarray}
 which corresponds to an $TPB(\alpha,\beta,\tau^2)$ density. Therefore, the above hierarchical one-group formulation can alternatively be represented as
 \begin{eqnarray}
 \mu_i \mid \kappa_i,\sigma^2 \mbox{ } &\sim& \mbox{ } N\big(0,(\kappa_i^{-1} - 1 )\sigma^2 \big)\mbox{ independently for}i=1,\ldots,m\nonumber\\
 \kappa_i \mbox{ } &\sim& \mbox{ } TPB(\alpha,\beta,\tau^2) \mbox{ independently for }i=1,\ldots,m\nonumber
 \end{eqnarray}
 This gives the three parameter beta normal mixture priors introduced by \cite{ADC2011} and is denoted by $TPBN(\alpha,\beta,\tau^2\sigma^2)$. The TPBN family of priors is rich enough to generalize some well known shrinkage priors, such as the horseshoe prior with $\alpha=\frac{1}{2}$, $\beta =\frac{1}{2},$ the Strawderman-Berger prior with $\alpha=1$, $\beta=\frac{1}{2}$ and $\tau^2=1$ and the normal-exponential-gamma priors with $\alpha=1$, $\beta > 0$.\vspace{1.5mm}
  
 Note that the prior in (\ref{TPB}) can also be written as,
 \begin{eqnarray}
  \pi(\lambda_i^2) &=& K(\lambda_i^2)^{-\beta-1} L(\lambda_i^2)\nonumber
  \end{eqnarray}
 where $L(\lambda_i^2)=\big(1+\frac{1}{\lambda_i^2}\big)^{-(\alpha+\beta)}$ and $ K= \frac{\Gamma(\alpha+\beta)}{\Gamma(\alpha)\Gamma(\beta)}$. Clearly, $\lim_{\lambda^2 \rightarrow \infty} L(\lambda^2)= 1$, thereby implying that the TPBN family of priors falls within our general class of global-scale mixture normals. Also, note that $\sup_{t\in(0,\infty)} L(t)=1,$ which shows that the associated function $L(\cdot)$ is bounded as mentioned earlier.
 \subsection{Generalized Double Pareto Priors}
 Let us consider the hierarchical one-group global-local scale mixture formulation as described at the beginning of Section 2, where $\pi(\lambda_i^2)$ is defined as,
  \begin{eqnarray} 
 \lambda_i^2 | \gamma_i \mbox{ } &\sim& \mbox{Exponential}(\frac{\gamma_i^{2}}{2}) \mbox{ independently for } i=1,\ldots,m\nonumber \\
 \gamma_i | \alpha,\beta \mbox{ } &\sim& \mbox{Gamma}(\alpha,\beta) \mbox{ independently for } i=1,\ldots,m.\nonumber
 \end{eqnarray}
 for some fixed $\alpha > 0$ and $\beta > 0$. It follows that $\mu_i \mid (\tau, \sigma) $ has the density
  \begin{equation} \label{GDP_DENSITY}
  \pi(\mu_i|\tau,\sigma)=\frac{1}{2\tau\sigma\beta/\alpha}\bigg(1+\frac{|\mu_i|}{\alpha \cdot \tau\sigma\beta/\alpha} \bigg)^{-(1+\alpha)}
 \end{equation}
 The density in (\ref{GDP_DENSITY}) above, corresponds to a generalized double Pareto density with shape parameter $\alpha$ and scale parameter $\xi=\tau\sigma\beta/\alpha > 0$ and is denoted by $GDP(\alpha,\xi)$. Equivalently, it may also be interpreted as the density of a $GDP(\alpha,\beta/\alpha)$ random variable multiplied by $\tau\sigma.$ When $\alpha=1$ and $\beta=1,$ a $GDP(\alpha,\beta/\alpha)$ distribution is known as the standard double Pareto distribution. We refer to this hierarchical global-local scale mixture formulation with $\pi(\lambda_i^2)$ defined as above, as the generalized double Pareto prior introduced by \cite{ADL2012}. For simulations in Section 5, in our hierarchical global-local scale mixture formulation, when we talk about the standard double Pareto prior, we mean that $\lambda_i^2 \sim GDP(1,1)$, and we mix further with respect to the joint density of $(\tau, \sigma)$ for a full Bayes treatment or use an empirical Bayes estimate of $\tau$ taking $\sigma^2$ to be fixed, as mentioned before.\vspace{1.5mm}
 
 Now we demonstrate that the generalized double Pareto prior falls within our chosen class of tail robust shrinkage priors. Towards this end, we first observe that the mixing density $\pi(\lambda_i^2) $ corresponding to the generalized double Pareto prior can be written as,
  \begin{equation}\label{MIXING_DENSITY_GDP}
  \pi(\lambda_i^2)=\frac{\beta^{\alpha}}{2\Gamma(\alpha)}\int_{0}^{\infty}e^{-(\frac{\gamma_i^2\lambda_i^2}{2}+\beta\gamma_i)}\gamma_i^{\alpha+1}d\gamma_i.
 \end{equation}
Note that using Fubini's Theorem one has $\int_{0}^{\infty} \pi(\lambda_i^2)d\lambda_i^2=1$, so that the density given in (\ref{MIXING_DENSITY_GDP}) is proper. Now, using the change of variable $u= \lambda_i^2\gamma_i^2/2$ in the integral on the right hand side of (\ref{MIXING_DENSITY_GDP}), we obtain,
 \begin{eqnarray}
  \pi(\lambda_i^2)
  &=& \frac{\beta^{\alpha}(\lambda_i^2)^{-\frac{\alpha}{2}-1}}{2^{1-\frac{\alpha}{2}}\Gamma(\alpha)} \int_{0}^{\infty} e^{-\beta\sqrt{\frac{2u}{\lambda_i^2}}}e^{-u}u^{(\frac{\alpha}{2}+1)-1}du\nonumber\\
  &=& K(\lambda_i^2)^{-\frac{\alpha}{2}-1} L(\lambda_i^2), \mbox{ say,}\nonumber
  \end{eqnarray}
 where $L(\lambda_i^2)=2^{\frac{\alpha}{2} -1} \int_{0}^{\infty} e^{-\beta\sqrt{\frac{2u}{\lambda_i^2}}}e^{-u}u^{(\frac{\alpha}{2}+1)-1}du$ and $K = \frac{\beta^{\alpha}}{\Gamma(\alpha)}$.\vspace{1.5mm}

Now applying Lebesgue's Dominated Convergence Theorem, we obtain,
  \begin{equation}
   \lim_{\lambda_i^2 \rightarrow \infty} L(\lambda_i^2)= 2^{\frac{\alpha}{2} -1}\int_{0}^{\infty}e^{-u}u^{(\frac{\alpha}{2}+1)-1}du =2^{\frac{\alpha}{2} -1} \Gamma(\frac{\alpha}{2}+1)>0,\nonumber
  \end{equation}
 which means that $L(\cdot)$ defined above slowly varies over $(0,\infty).$ This also shows that the mixing density given in (\ref{MIXING_DENSITY_GDP}) can be expressed in the form given by equation (\ref{LAMBDA_PRIOR}) with $L(\cdot)$ as above and $a=\alpha/2$. Thus, the generalized double Pareto prior falls within our general class of tail-robust shrinkage priors. Moreover, using the monotone convergence theorem, it follows that $\sup_{t\in(0,\infty)} L(t) =2^{\frac{\alpha}{2} -1} \Gamma(\frac{\alpha}{2}+1)$, which means the function $L(\cdot)$ defined above, is bounded as well.\vspace{1.5mm}

%
\section{Asymptotic framework and the main results}
In this section we present our major theoretical results about asymptotic optimality of the multiple testing rules (\ref{INDUCED_DECISION}) and (\ref{INDUCED_DECISION_EB}) under study. In Section 3.1, first we describe the decision theoretic setting and the optimal Bayes rule under this setting. We then describe the asymptotic framework under which our theoretical results are derived. Section 3.2 presents the main theoretical results of this paper involving asymptotic bounds to the Bayes risk of the induced decisions (\ref{INDUCED_DECISION}) and (\ref{INDUCED_DECISION_EB}) under study. The Oracle optimality properties of these decision rules up to $O(1)$ then follow immediately.

\subsection{Optimal Bayes Rule and the Asymptotic Framework}
 Suppose $X_1,\cdots,X_m$ are independently distributed according to the two-groups model (\ref{TWO_GROUP_X}), with $\sigma^2=1$. We are interested in the multiple testing problem (\ref{TO_TEST}). We assume a symmetric 0-1 loss for each individual test and the total loss of a multiple testing procedure is assumed to be the sum of the individual losses incurred in each test. Letting $t_{1i}$ and $t_{2i}$ denote the probabilities of type I and type II errors respectively of the $i$-th test, the Bayes risk of a multiple testing procedure under the two-groups model (\ref{TWO_GROUP_X}) is given by   
 \begin{equation}\label{BAYES_RISK_GEN}
  R=\sum \limits_{i=1}^{m} \big\{(1-p)t_{1i}+pt_{2i}\big\}.
 \end{equation}
It was shown in \cite{BGT2008} and \cite{BCFG2011} that the multiple testing rule which minimizes the Bayes Risk in (\ref{BAYES_RISK_GEN}) is the test which, for each $i=1,\ldots,m,$ rejects $H_{0i}$ if
 \begin{equation}
  \frac{f(x_i|\nu_i=1)}{f(x_i|\nu_i=0)} > \frac{1-p}{p}, \mbox{ i.e. }X_i^2 > c^2.\nonumber
 \end{equation}
 where $f(x_i|\nu_i=1)$ denotes the marginal density of $X_i$ under $H_{1i}$ while $f(x_i|\nu_i=0)$ denotes that under $H_{0i}$ and $c^2\equiv c^2_{\psi,f}=\frac{1+\psi^2}{\psi^2}(\log(1+\psi^2)+ 2\log(f))$, with $f= \frac{1-p}{p}$.
The above rule is called Bayes Oracle since it makes use of the unknown parameters $\psi$ and $p$, and hence is not attainable in finite samples. By introducing two new parameters $u=\psi^2 $ and $v=uf^2, $ the above threshold becomes
 \begin{equation}
  c^2\equiv c^2_{u,v}=(1+\frac{1}{u})(\log v + \log (1+\frac{1}{u})).\nonumber
 \end{equation}
 \cite{BCFG2011} considered the following asymptotic scheme:
 \begin{assumption}\label{ASSUMPTION_ASYMP}
     The sequence of vectors $(\psi_m,p_m)$ satisfies the following conditions:
  \begin{enumerate}
    \item $p_m \rightarrow 0 \mbox{ as }m \rightarrow \infty$.
    
    \item $u_m=\psi_m^2 \rightarrow \infty \mbox{ as }m \rightarrow \infty$.
    
    \item $v_m=u_mf^2=\psi_m^2{(\frac{1-p_m}{p_m})}^2 \rightarrow \infty \mbox{ as }m \rightarrow \infty$.
    
    \item $\frac{\log{v_m}}{u_m} \rightarrow C \in (0,\infty) \mbox{ as }m \rightarrow \infty$.
  \end{enumerate}
 \end{assumption}
 
 Under Assumption \ref{ASSUMPTION_ASYMP}, \cite{BCFG2011} obtained the following asymptotic expressions of type I and type II error probabilities of the Bayes Oracle, given by,
   \begin{eqnarray}
    t_1^{BO} &=& e^{-C/2} \sqrt{\frac{2}{\pi v \log v}}(1+o(1)), \mbox{ and }\label{T1_OPT}\\
    t_2^{BO} &=& (2\Phi(\sqrt{C})-1)(1+o(1)),\label{T2_OPT}
   \end{eqnarray}
 and the corresponding optimal Bayes risk is given by,
   \begin{equation}\label{OPT_BAYES_RISK}
    R_{Opt}^{BO}=m((1-p)t_1^{BO}+pt_2^{BO})=mp(2\Phi(\sqrt{C})-1)(1+o(1)).
   \end{equation}
In (\ref{T1_OPT})-(\ref{OPT_BAYES_RISK}) above, the $o(1)$ terms tend to zero as $m\rightarrow\infty$.\vspace{1.5mm}
 
We want to study asymptotic optimality properties of the multiple testing rules (\ref{INDUCED_DECISION}) and (\ref{INDUCED_DECISION_EB}), induced by our general class of one-group tail robust shrinkage priors when applied to data generated from the two-groups model (\ref{TWO_GROUP_X}), where the hyperparameters $(\psi_m, p_m)$ of the two-groups model satisfy Assumption \ref{ASSUMPTION_ASYMP}. For simplicity of notation, henceforth we drop the subscript $m$ from $p_m$, $\tau^2_m$ and $\psi^2_m$. For the sake of completeness, we describe below the one-group prior specification for our theoretical analysis:
 \begin{equation}\label{FULL_OG_MODEL}
 \begin{array}{ll}
 \mu_i|(\lambda_i^2, \tau^2) &\stackrel{ind}{\sim} N(0,\lambda_i^2\tau^2), \mbox{ for } i=1,\ldots,m,\\
  \lambda_i^2 &\stackrel{ind}{\sim} \pi(\lambda_i^2) = K (\lambda_i^2)^{-a-1} L(\lambda_i^2), \mbox{ for } i=1,\ldots,m,
 \end{array} \bigg\rbrace 
 \end{equation}
  where $a> 0$, $K >0$ and $L$ is a non-constant slowly varying function over $(0, \infty)$. Under (\ref{FULL_OG_MODEL}), the shrinkage coefficients $\kappa_i=1/(1+\lambda_i^2\tau^2)$'s are independently distributed given $(X_1,\cdots,X_m,\tau^2)$, with the posterior of $\kappa_i$ only depending on $(X_i, \tau^2)$ and is given by
  \begin{equation}
   \pi(\kappa_i|X_i,\tau) \propto \kappa_i^{a+\frac{1}{2}-1}(1-\kappa_i)^{-a-1}L\big(\frac{1}{\tau^2}\big(\frac{1}{\kappa_i}-1\big)\big) e^{-\frac{\kappa_i X_{i}^{2}}{2}} , \mbox{ $\kappa_i \in (0,1)$}.\nonumber
  \end{equation}
 \subsection{Main Theoretical Results}
 In this section, we present in Theorem \ref{THM_BAYES_RISK_UBLB} and Theorem \ref{THM_BAYES_RISK_EB_UB} the main theoretical findings of this paper. Theorem \ref{THM_BAYES_RISK_UBLB} gives asymptotic upper and lower bounds to the Bayes risk of the multiple testing procedure (\ref{INDUCED_DECISION}) under study, when the global shrinkage parameter $\tau$ is treated as a tuning parameter, while Theorem \ref{THM_BAYES_RISK_EB_UB} gives asymptotic upper bounds to the Bayes risk of the empirical Bayes procedure defined in (\ref{INDUCED_DECISION_EB}). Proofs of Theorem \ref{THM_BAYES_RISK_UBLB} and Theorem \ref{THM_BAYES_RISK_EB_UB} are based on some asymptotic bounds for the corresponding type I and type II error probabilities of the individual decisions in (\ref{INDUCED_DECISION}) and (\ref{INDUCED_DECISION_EB}), which, in turn, depend on a set of concentration and moment inequalities. We present these inequalities and the asymptotic bounds on both kinds of error probabilities in Section 4 of this paper. Proofs of Theorem \ref{THM_BAYES_RISK_UBLB} and Theorem \ref{THM_BAYES_RISK_EB_UB} are given in the Appendix.
 
\begin{thm}\label{THM_BAYES_RISK_UBLB}
 Suppose $X_1,\cdots,X_m$, are i.i.d. observations having the two-groups normal mixture distribution in (\ref{TWO_GROUP_X}) with $\sigma^2=1$, and we wish to test the $m$ hypotheses $H_{0i}:\nu_i=0$ vs $H_{1i}:\nu_i=1$, for $i=1,\ldots,m$, simultaneously, using the decision rule (\ref{INDUCED_DECISION}) induced by the one-group priors (\ref{FULL_OG_MODEL}). Suppose Assumption \ref{ASSUMPTION_ASYMP} is satisfied by the sequence of parameters $(\psi^2,p)$. Further assume that $\tau \rightarrow 0$ as $m\rightarrow\infty$ such that $\lim_{m\rightarrow\infty} \tau/p \in(0,\infty)$, and $\pi(\lambda_i^2)$ is such that
  \begin{enumerate}[(I)]
   \item $\frac{1}{2} < a < 1$
   \item  $a= \frac{1}{2}$ and $L(t)/\sqrt{\log(t)} \rightarrow 0$ as $t \rightarrow \infty$.
   \end{enumerate} 
 Then, as $m \rightarrow \infty$, the Bayes risk of the multiple testing rules in (\ref{INDUCED_DECISION}), denoted $R_{OG}$, satisfies
  \begin{equation}\label{UB_LB_BAYES_RISK_OG}
   mp\big[2\Phi\big(\sqrt{2a}\sqrt{C}\big)-1\big]\big(1+o(1)\big)
   \leq R_{OG}
   \leq mp\big[2\Phi\bigg(\sqrt{\frac{2aC}{\eta(1-\delta)}}\bigg)-1\big]\big(1+o(1)\big)
  \end{equation}
 for every fixed $\eta \in (0,\frac{1}{2})$ and $\delta \in (0,1)$. The $o(1)$ terms above are not necessarily the same, tend to zero as $m\rightarrow\infty$ and depend on the choice of $\eta \in (0,\frac{1}{2})$ and $\delta \in (0,1)$.
  \end{thm}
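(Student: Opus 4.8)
The plan is to reduce the Bayes risk to the two per-coordinate error probabilities and then to pin down the value at which the rule (\ref{INDUCED_DECISION}) switches. Since $X_1,\dots,X_m$ are i.i.d.\ under (\ref{TWO_GROUP_X}) and (\ref{INDUCED_DECISION}) acts on each coordinate only through $E(\kappa_i\mid X_i,\tau)$, the type~I probabilities share a common value $t_1^{OG}$ and the type~II probabilities a common value $t_2^{OG}$, so $R_{OG}=m\big[(1-p)t_1^{OG}+p\,t_2^{OG}\big]$. From the posterior $\pi(\kappa_i\mid X_i,\tau)\propto\kappa_i^{a-1/2}(1-\kappa_i)^{-a-1}L\!\big(\tfrac{1}{\tau^2}(\tfrac{1}{\kappa_i}-1)\big)e^{-\kappa_i X_i^2/2}$ one sees that $E(\kappa_i\mid X_i,\tau)$ depends on $X_i$ only through $y=X_i^2$ and that $\partial_y E(\kappa_i\mid X_i^2=y,\tau)=-\tfrac12\mathrm{Var}(\kappa_i\mid X_i^2=y,\tau)\le0$; since it is $\approx1$ at $y=0$ (for $\tau$ small) and tends to $0$ as $y\to\infty$, the rule (\ref{INDUCED_DECISION}) is exactly ``reject $H_{0i}$ iff $X_i^2>t_m$'' for a unique threshold $t_m=t_m(\tau,a,L)$ solving $E(\kappa_i\mid X_i^2=t_m,\tau)=\tfrac12$. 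Consequently $t_1^{OG}=2\Phi(-\sqrt{t_m})$ and $t_2^{OG}=2\Phi\!\big(\sqrt{t_m/(1+\psi^2)}\big)-1$, so the whole theorem reduces to two-sided control of $t_m$.

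To bound $t_m$ I would analyse $E(\kappa_i\mid X_i^2=y,\tau)$ by splitting the integrals defining it over a neighbourhood of $\kappa_i=0$ and a neighbourhood of $\kappa_i=1$. On the first region $L\!\big(\tfrac{1}{\tau^2}(\tfrac{1}{\kappa_i}-1)\big)$ is comparable to $L(1/\tau^2)$ up to slowly varying corrections (Karamata's theory, Potter's bounds) and what remains is handled by Laplace-type estimates for $\int\kappa^{a\pm1/2}e^{-\kappa y/2}d\kappa$; on the second region the factor $(1-\kappa_i)^{-a-1}$, which looks non-integrable, is tamed by properness of $\pi(\lambda_i^2)$ together with $L\!\big(\tfrac{1}{\tau^2}(\tfrac{1}{\kappa_i}-1)\big)\to L(0^+)$, and it is here that $a<1$ keeps the mass near $\kappa_i=1$ bounded so that $t_m$ stays at the right logarithmic scale. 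This yields a lower bound $t_m\ge(2aC-\varepsilon)(1+\psi^2)$ for every $\varepsilon>0$ (eventually in $m$) and, crucially in the boundary case $a=\tfrac12$, the sharper estimate that $e^{-t_m/2}$ is of order $\tau\,L(1/\tau^2)$ up to lower-order factors; and a matching upper bound $t_m\le\tfrac{2aC}{\eta(1-\delta)}(1+\psi^2)$ for each fixed $\eta\in(0,\tfrac12)$, $\delta\in(0,1)$, obtained the same way but with the two competing pieces bounded more crudely --- the constants $\eta,\delta$ being exactly the slack this introduces. Throughout I would convert Assumption~\ref{ASSUMPTION_ASYMP} together with $\tau/p\to c\in(0,\infty)$ into the relations $2aC(1+\psi^2)\sim2a\log v\sim4a\log(1/p)\sim4a\log(1/\tau)$, so that $e^{-t_m/2}$ is of order $\tau^{2a}$ (up to a slowly varying factor) when $a>\tfrac12$ and of order $\tau L(1/\tau^2)$ when $a=\tfrac12$, while $t_m/(1+\psi^2)$ is squeezed between $2aC$ and $\tfrac{2aC}{\eta(1-\delta)}$.

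Given these, the two inequalities in (\ref{UB_LB_BAYES_RISK_OG}) follow by bookkeeping. For the lower bound, drop the nonnegative term $m(1-p)t_1^{OG}$: since $\liminf_m t_m/(1+\psi^2)\ge2aC$, we get $\liminf_m t_2^{OG}\ge2\Phi(\sqrt{2aC})-1$, hence $R_{OG}\ge mp\,t_2^{OG}\ge mp\big[2\Phi(\sqrt{2aC})-1\big](1+o(1))$. For the upper bound, the type~I term is negligible: $t_1^{OG}=2\Phi(-\sqrt{t_m})\sim\sqrt{\tfrac{2}{\pi t_m}}\,e^{-t_m/2}$, which is of order $\tau^{2a}/\sqrt{\log(1/\tau)}=o(\tau)$ when $a>\tfrac12$ (as $2a>1$, the polynomial gain beating any slowly varying factor), and of order $\tau\,L(1/\tau^2)/\sqrt{\log(1/\tau^2)}=o(\tau)$ when $a=\tfrac12$ precisely by the hypothesis $L(t)/\sqrt{\log t}\to0$ with $t=1/\tau^2$; either way $m(1-p)t_1^{OG}=o(mp)$. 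Combining with $t_m\le\tfrac{2aC}{\eta(1-\delta)}(1+\psi^2)$, which gives $t_2^{OG}\le2\Phi\!\big(\sqrt{2aC/(\eta(1-\delta))}\big)-1+o(1)$, yields $R_{OG}=mp\,t_2^{OG}(1+o(1))\le mp\big[2\Phi\!\big(\sqrt{2aC/(\eta(1-\delta))}\big)-1\big](1+o(1))$ for every fixed $\eta\in(0,\tfrac12)$, $\delta\in(0,1)$, as asserted.

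The main obstacle is the content of the second paragraph: producing the two-sided bounds on $E(\kappa_i\mid X_i^2=y,\tau)$ sharp enough, and uniformly over the whole class (\ref{FULL_OG_MODEL}). The difficulty is real --- at $y\approx t_m$ the posterior of $\kappa_i$ is not concentrated but roughly bimodal, a bump near $0$ driven by the likelihood factor $e^{-\kappa_i X_i^2/2}$ competing with a bump near $1$ from the near-null regime, in which the singularity $(1-\kappa_i)^{-a-1}$ is controlled only by the vanishing of $L\!\big(\tfrac{1}{\tau^2}(\tfrac{1}{\kappa_i}-1)\big)$ --- so $E(\kappa_i\mid X_i,\tau)=\tfrac12$ is a balance between two masses rather than a localization statement, and it is precisely the relative sizes of these masses that must be estimated. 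Carrying this out for an arbitrary non-constant slowly varying $L$ via its Karamata representation, and thereby isolating exactly why $a\in[\tfrac12,1)$ is needed, with the side condition $L(t)/\sqrt{\log t}\to0$ appearing precisely at $a=\tfrac12$, is where essentially all the work lies; it is the substance of the concentration and moment inequalities of Section~4, after which the proof of Theorem~\ref{THM_BAYES_RISK_UBLB} is the assembly above.
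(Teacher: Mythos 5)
Your assembly is correct and is, in substance, the paper's own proof of this theorem: write $R_{OG}=m\big[(1-p)t_1+p\,t_2\big]$ with common per-test error probabilities, kill the type~I contribution by showing $(1-p)t_1/p\rightarrow 0$ -- which is exactly where conditions (I) and (II) enter, via $t_1\lesssim \tau^{2a}L(1/\tau^2)/\sqrt{\log(1/\tau^2)}$ together with $2a>1$, or $a=\tfrac12$ and $L(t)/\sqrt{\log t}\rightarrow 0$ -- and squeeze $t_2$ between $2\Phi(\sqrt{2aC})-1$ and $2\Phi\big(\sqrt{2aC/(\eta(1-\delta))}\big)-1$ using $\log(1/\tau^2)/\psi^2\rightarrow C$ (your chain $\log v\sim 2\log(1/p)\sim 2\log(1/\tau)$ is precisely how Assumption \ref{ASSUMPTION_ASYMP} and $\tau/p\rightarrow c$ are used). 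The one genuine difference is packaging: you first prove the rule is an exact threshold rule $X_i^2>t_m$ via $\partial_y E(\kappa_i\mid X_i^2=y,\tau)=-\tfrac12\mathrm{Var}(\kappa_i\mid y,\tau)\le 0$ and then control $t_m$ two-sidedly, whereas the paper never needs monotonicity or uniqueness of a crossing point -- it derives one-sided event inclusions directly from the moment inequality (Theorem \ref{THM_MOMENT_INEQ}) and the concentration inequality (Theorem \ref{THM_CONCEN_INEQ_2}), yielding Theorems \ref{THM_T1_UB}--\ref{THM_T2_LB}, and then assembles exactly as you do; your reformulation is a clean equivalent that buys a transparent reduction to ``bound $t_m$'' at the cost of an extra (easy) monotonicity step. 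The caveat is that your second paragraph is only a plan, and it is precisely the content of the paper's Section 4, proved there by the change of variable $t=\tau^{-2}(\kappa^{-1}-1)$ and Karamata theory (Lemmas \ref{LEM_A_1}--\ref{LEM_A_4}); if you carry it out, two details of your sketch need correcting: the restriction $a<1$ is not about taming the mass near $\kappa_i=1$ -- that region corresponds to small $t$ and is handled by properness of $\pi(\lambda_i^2)$ alone -- but is needed so that $\int^{x}t^{-a}L(t)\,dt\sim x^{1-a}L(x)/(1-a)$ (Karamata with exponent $-a>-1$) gives the numerator of $E(1-\kappa_i\mid X_i,\tau)$ the order $\tau^{2a}L(1/\tau^2)$; and you should not invoke ``$L(0^+)$'', since $L$ is only assumed slowly varying at infinity and need not converge at the origin.
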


 As a consequence of Theorem \ref{THM_BAYES_RISK_UBLB}, for a very large class of priors covered by (I) or (II) of the said theorem, the ratio of the Bayes risk of the induced decisions in (\ref{INDUCED_DECISION}) to that of the Bayes Oracle (see (\ref{OPT_BAYES_RISK}) in Section 3.1) is asymptotically bounded by,
  \begin{equation}\label{RATIO_RISK}
   \frac{2\Phi(\sqrt{2a}\sqrt{C})-1}{2\Phi(\sqrt{C})-1}\big(1+o(1)\big) \leq \frac{R_{OG}}{R_{Opt}^{BO}} \leq \frac{2\Phi(\sqrt{2a/(\eta(1-\delta))}\sqrt{C})-1}{2\Phi(\sqrt{C})-1}\big(1+o(1)\big) \mbox{ as } m \rightarrow \infty,
  \end{equation}
  for every fixed $\eta \in (0, \frac{1}{2})$ and every fixed $\delta \in (0,1)$. That is,
   \begin{equation}
    R_{OG}=O(R_{Opt}^{BO}) \mbox{ as } m \rightarrow \infty.\nonumber
   \end{equation}
 For small values of $C$ and appropriately chosen $\eta \in (0,\frac{1}{2})$ and $\delta \in (0,1),$ the ratios in (\ref{RATIO_RISK}) given above, can be made close to 1. Therefore, we see that, in sparse situations, when the global shrinkage parameter $\tau$ is asymptotically of the same order as that of the proportion of true alternatives $p,$ the decision rules (\ref{INDUCED_DECISION}), imposed by a very broad class of tail robust one-group priors satisfying (I) or (II) of Theorem \ref{THM_BAYES_RISK_UBLB}, asymptotically attain the optimal Bayes risk up to a multiplicative constant, the constant being close to 1. It may be seen that the condition (II) of Theorem \ref{THM_BAYES_RISK_UBLB} is satisfied if, in the prior on the local shrinkage parameter in (\ref{FULL_OG_MODEL}), one has $a=\frac{1}{2}$ and $L(\cdot)$ is, say, uniformly bounded or $\lim_{t \rightarrow \infty} L(t) \in (0, \infty)$. It has already been shown in Section 2 that the horseshoe prior, the Strawderman-Berger prior and members from the families of normal-exponential-gamma priors and generalized double Pareto priors with appropriate choice of $(\alpha, \beta)$, satisfy these conditions.\vspace{1.5mm}

 The theoretical results of the forthcoming sections of this paper suggest that, for the above Oracle optimality property to be true, the optimal choice of $\tau$ is such that it is asymptotically of the same order of $p$, that is, $\frac{\tau}{p}$ has a finite, positive limit as the number of tests $m$ grows to infinity. It will be shown further that there are other choices of $\tau$ depending on $p$, for which the desired Oracle optimality up to $O(1)$ may no longer be true. These will be discussed later in a greater detail in Section 4.2 of this paper.\vspace{1.5mm}
 
 The next theorem gives an asymptotic upper bound for the Bayes risk of the empirical Bayes procedure defined in (\ref{INDUCED_DECISION_EB}) under the asymptotic framework of \cite{BCFG2011} together with the assumption that $p\equiv p_m \propto m^{-\epsilon}$ for $0 < \epsilon < 1$. As a consequence, the Oracle optimality property of the empirical Bayes procedure (\ref{INDUCED_DECISION_EB}) follows immediately. Note that, the condition $p\propto m^{-\epsilon}$, where $0 < \epsilon < 1$, is very mild in nature and covers most of the cases of theoretical and practical interest.
\begin{thm}\label{THM_BAYES_RISK_EB_UB}
 Suppose $X_1,\cdots,X_m$, are i.i.d. observations having the two-groups mixture distribution described in (\ref{TWO_GROUP_X}) with $\sigma^2=1$, and we wish to test the $m$ hypotheses $H_{0i}:\nu_i=0$ vs $H_{1i}:\nu_i=1$, $i=1,\ldots,m$, simultaneously, using the decision rule (\ref{INDUCED_DECISION_EB}) induced by the one-group priors (\ref{FULL_OG_MODEL}). Suppose Assumption \ref{ASSUMPTION_ASYMP} is satisfied by $(\psi^2, p)$ with $p \propto m^{-\epsilon}$, for some $0 < \epsilon < 1$. Further assume that in the prior $\pi(\lambda_i^2)$ for the local shrinkage parameter $\lambda_i^2$ in (\ref{FULL_OG_MODEL}) satisfies:
  \begin{enumerate}[(I)]
   \item $\frac{1}{2} < a < 1$, or,
   \item  $a= \frac{1}{2}$ and $L(t)/\sqrt{\log(t)} \rightarrow 0$ as $t \rightarrow \infty$.
   \end{enumerate}
 Then, the Bayes risk of the multiple testing rules in (\ref{INDUCED_DECISION_EB}), denoted $R^{EB}_{OG}$, is bounded above by,
  \begin{equation}\label{UB_EMP_BAYES_RISK}
   R^{EB}_{OG}
   \leq mp\big[2\Phi\bigg(\sqrt{\frac{2aC}{\eta(1-\delta)}}\bigg)-1\big]\big(1+o(1)\big) \mbox{ as }m\rightarrow\infty,
 \end{equation}
 for every fixed $\eta \in (0,\frac{1}{2})$ and $\delta \in (0,1)$, where the $o(1)$ term above tends to zero as $m\rightarrow\infty$ and depends on the choice of $\eta \in (0,\frac{1}{2})$ and $\delta \in (0,1)$.
  \end{thm}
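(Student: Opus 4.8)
The plan is to piggyback on Theorem 3.1 by controlling the discrepancy between using the data-driven $\widehat{\tau}$ and using a deterministic $\tau$ of the same order as $p$. First I would split the sample space according to the event $A_m = \{c\, p \le \widehat{\tau} \le C'\, p\}$ for suitable constants $0 < c < C' < \infty$ (with the floor $1/m$ in the definition of $\widehat{\tau}$ being harmless since $p \propto m^{-\epsilon}$ with $\epsilon<1$ forces $p \gg 1/m$). On $A_m$, I would use monotonicity of the posterior shrinkage weight $E(1-\kappa_i\mid X_i,\tau)$ in $\tau$: a larger $\tau$ makes rejection easier. Hence the type I error of the $\widehat{\tau}$-rule on $A_m$ is dominated by that of the deterministic rule with $\tau = C' p$, and the type II error by that with $\tau = c\, p$. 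Both of these are exactly the deterministic quantities controlled in Section 4 (and feeding Theorem 3.1), so on $A_m$ the contribution to $R^{EB}_{OG}$ is at most $mp[2\Phi(\sqrt{2aC/(\eta(1-\delta))})-1](1+o(1))$, matching the claimed bound.

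The second step is to show the complementary event $A_m^c$ contributes negligibly to the Bayes risk. Here I would use a concentration argument for $\widehat{\tau}$. Writing $S_m = \sum_{j=1}^m 1\{|X_j| > \sqrt{c_1 \log m}\}$, its mean under the two-groups model is $m(1-p)\,2\bar\Phi(\sqrt{c_1\log m}) + m p\cdot 2\bar\Phi(\sqrt{c_1\log m}/\sqrt{1+\psi^2})$; the null part is $O(m^{1-c_1/2}\,(\log m)^{-1/2}) = O(m\,p)\cdot o(1)$ when $c_1 \ge 2$ and $p \propto m^{-\epsilon}$ with $\epsilon < 1$, while the non-null part is of order $mp$ (up to constants depending on $C$), because $\sqrt{c_1\log m}/\psi$ stays bounded under Assumption 3.1(4). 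So $E(S_m) \asymp mp$, and a Bernstein/Chebyshev bound on the sum of independent indicators gives $P(A_m^c) \to 0$; in fact it can be made to decay fast enough (polynomially in $m$) that $m\cdot P(A_m^c) = o(mp)$. Since the per-hypothesis loss is bounded by $1$, the total risk contribution from $A_m^c$ is at most $m\,P(A_m^c) = o(mp)$, which is absorbed into the $(1+o(1))$ factor. Combining the two steps yields (\ref{UB_EMP_BAYES_RISK}).

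The main obstacle will be the first step — establishing the stochastic monotonicity of $E(1-\kappa_i\mid X_i,\tau)$ in $\tau$ carefully enough to transfer the deterministic-$\tau$ error bounds to the random-$\tau$ rule uniformly over $A_m$, and then checking that the constants $c, C'$ can be chosen so that both endpoint deterministic rules still fall under the hypotheses (I) or (II) of Theorem 3.1 with $\lim \tau/p \in (0,\infty)$. Concretely, from the posterior density $\pi(\kappa_i\mid X_i,\tau) \propto \kappa_i^{a-1/2}(1-\kappa_i)^{-a-1} L\big(\tau^{-2}(\kappa_i^{-1}-1)\big) e^{-\kappa_i X_i^2/2}$, one needs that increasing $\tau^2$ shifts this law stochastically toward smaller $\kappa_i$ (larger shrinkage weight $1-\kappa_i$); for the slowly varying factor this requires a monotone-likelihood-ratio type argument, or alternatively one can bound $L$ above and below by the constants from the boundedness property noted in Section 2 and reduce to the pure-power case, where the monotonicity is transparent. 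The remaining estimates — the error-probability bounds at the two deterministic endpoints and the tail bound on $\widehat{\tau}$ — are routine given Section 4 and standard Gaussian tail asymptotics, so I would defer their details to the Appendix.
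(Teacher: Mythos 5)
Your proposal is correct in substance and rests on exactly the two ingredients the paper uses: monotonicity of $E(1-\kappa_i\mid x,\tau)$ in $\tau$ (which the paper simply cites from van der Pas et al.\ and uses without proof, so this is not an extra gap on your side, though your fallback of sandwiching $L$ between constants yields only a comparison of the shrinkage weights up to a multiplicative constant, not true monotonicity -- it can still be patched because a constant shift of the $1/2$ threshold does not change the leading $2a\log(1/\tau^2)$ term in the deterministic bounds), and concentration of the count $S_m=\sum_j 1\{|X_j|>\sqrt{c_1\log m}\}$ around its mean $m\alpha_m\sim 2\beta mp$. The organization differs from the paper's: you use a single two-sided event $A_m=\{cp\le\widehat{\tau}\le C'p\}$ and bound the whole risk on $A_m^c$ by $m\,\Pr(A_m^c)$, whereas the paper splits one-sidedly and separately for each error type ($\widehat{\tau}\le 2\alpha_m$ versus $\widehat{\tau}>2\alpha_m$ for type I, $\widehat{\tau}>\gamma\alpha_m$ versus $\widehat{\tau}\le\gamma\alpha_m$ for type II, Theorems \ref{THM_T1_EB_UB}--\ref{THM_T2_EB_UB}). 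The consequence of your coarser decomposition is that you need $\Pr(A_m^c)=o(p)$ on \emph{both} tails, since the type I contributions carry weight $(1-p)\approx 1$ per hypothesis; Chebyshev gives only $O(1/(mp))=O(m^{\epsilon-1})$, which fails to be $o(p)$ once $\epsilon\ge 1/2$, so your parenthetical ``Bernstein/Chebyshev'' must in fact be a Bernstein/Chernoff/Hoeffding-type exponential bound (the paper needs the exponential Hoeffding bound only on the type I side and gets away with Chebyshev on the type II side, which is a small economy your route gives up). Two further details you should make explicit: $\widehat{\tau}$ and $X_i$ are dependent, so in bounding $\Pr(A_m^c\mid \nu_i)$ you should decouple as the paper does (drop the $i$-th indicator, or intersect with $\{|X_i|\le\sqrt{c_1\log m}\}$; removing one indicator is harmless since $m\alpha_m\to\infty$); and the endpoints $\tau=cp$, $\tau=C'p$ indeed satisfy $\lim\tau/p\in(0,\infty)$, so the deterministic type II bound at $cp$ yields the same constant $2\Phi(\sqrt{2aC/(\eta(1-\delta))})-1$ and the type I bound at $C'p$ gives $\frac{1-p}{p}t_1\to 0$ under (I) or (II), exactly as in Theorem \ref{THM_BAYES_RISK_UBLB}. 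With the exponential concentration bound and the decoupling step spelled out, your argument goes through and delivers (\ref{UB_EMP_BAYES_RISK}).
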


Now using Theorem \ref{THM_BAYES_RISK_EB_UB} it follows immediately that
   \begin{equation}
    R^{EB}_{OG}=O(R_{Opt}^{BO}) \mbox{ as } m \rightarrow \infty.\nonumber
   \end{equation}
 As before, for small values of $C$, and appropriately chosen $\eta \in (0,\frac{1}{2})$ and $\delta \in (0,1)$, the ratio of risk $R^{EB}_{OG}/R_{Opt}^{BO}$ can be made close to 1.\vspace{1.5mm}
 
 Using the techniques employed for deriving asymptotic upper bounds for the type I and type II error probabilities of the empirical Bayes decisions in (\ref{INDUCED_DECISION_EB}), one can show easily that the empirical Bayes estimate $\widehat{\tau}$ defined in (\ref{TAU_HAT_EMPB}), consistently estimates the unknown degree of sparsity $p$ up to some multiplicative factor. This will be made more precise in Remark \ref{REM_TAU_CONV}. As mentioned already that the desired Bayesian optimality property as presented in Theorem \ref{THM_BAYES_RISK_UBLB} holds when $\tau$ is asymptotically of the same order of $p$, which seems to be an optimal choice of $\tau$ in case $p$ is known. This perhaps explains the good performance of our proposed empirical Bayes procedure using the estimate $\widehat{\tau}$ and gives a strong theoretical support in favor of using such a plug-in estimate of $\tau$.
 
 \subsection{A comparison with the work of Datta and Ghosh (2013)}
 A careful inspection of the proof of Theorem 3.4 of \cite{DG2013} reveals the following. Under Assumption \ref{ASSUMPTION_ASYMP}, when $\lim_{m\rightarrow\infty} \tau/p \in(0,\infty)$, the Bayes risk of the decision rules (\ref{INDUCED_DECISION}) induced by the horseshoe prior, denoted $R_{HS(DG)}$, satisfies,
  \begin{equation}\label{UB_DG}
   R_{HS(DG)} \leq mp\big[2\Phi\bigg(\sqrt{\frac{2C}{\eta(1-\delta)}}\bigg)-1\big]\big(1+o(1)\big) \mbox { as } m \rightarrow \infty,
  \end{equation}
 for every fixed $\eta \in (0,\frac{1}{2})$ and $\delta \in (0,1)$. A comparison between the upper bounds in (\ref{UB_LB_BAYES_RISK_OG}) and (\ref{UB_DG}) shows that our results not only generalize the theoretical finding concerning the asymptotic Bayes optimality of the horseshoe prior, but at the same time, sharpens the upper bound to the Bayes risk of the induced decisions under study, for $\frac{1}{2} \leq a < 1,$ across the general class of priors given in (\ref{FULL_OG_MODEL}), and satisfying conditions (I) or (II) of Theorem \ref{THM_BAYES_RISK_UBLB}, including the horseshoe, in particular.\vspace{1.5mm}
  
 Although a few ideas employed in the proofs of this paper are similar to those in \cite{DG2013}, our arguments heavily hinge upon appropriate use of properties of slowly varying functions. It will be observed later in this paper that application of well-known properties of slowly varying functions often leads to exact asymptotic orders of certain integrals, without the need to depend mainly on using algebraic upper and lower bounds which can be improved further. In fact, using this technique, we obtain a sharper asymptotic bound to the probability of type II errors and hence on the overall risk (in Theorem \ref{THM_BAYES_RISK_UBLB}) as compared to that in \cite{DG2013}. See Remark 4.2 in this context.
  
 \section{Some key inequalities and bounds on probabilities of type I and type II errors}
In Section 4.1, we present some concentration and moment inequalities involving the posterior distributions of the shrinkage coefficients $\kappa_i$'s. These inequalities are essential for deriving asymptotic bounds for probabilities of type I and type II errors of the multiple testing procedures (\ref{INDUCED_DECISION}) and (\ref{INDUCED_DECISION_EB}) under study, presented in Section 4.2 and Section 4.3, respectively. Proofs of all these results are given in the Appendix.
 \subsection{Concentration and Moment Inequalities}
Before presenting the theoretical results of this section, let us first briefly describe how they can be useful in studying the error probabilities of two kinds. Let $t_{1i}$ and $t_{2i}$ denote respectively the probabilities of type I and type II errors of the $i$-th individual decision in (\ref{INDUCED_DECISION}). Then, by definition, $t_{1i}= \Pr(E(1-\kappa_i|X_i,\tau) > \frac{1}{2}|H_{0i} \mbox{ is true})$ and $t_{2i}= \Pr(E(\kappa_i|X_i,\tau) > \frac{1}{2}|H_{1i} \mbox{ is true})$. It seems that finding the exact asymptotic orders of $t_{1i}$ and $t_{2i}$ is infeasible. Therefore, one convenient and fruitful approach to study their asymptotic behaviors is to find non-trivial asymptotic bounds for them. One way of accomplishing this is to obtain appropriate bounds for either of $E(1-\kappa_i|X_i,\tau)$ and $\Pr(\kappa_i>\eta|X_i,\tau)$ (since $E(\kappa_i|X_i,\tau)$ can be bounded above by $\eta+\Pr(\kappa_i>\eta|X_i,\tau)$, for any $\eta\in(0,1)$), followed by some judicious applications of these bounds.\vspace{1.5mm}

The following theorem is our first step towards this and gives the first concentration inequality involving the posterior distribution of $\kappa_i$'s. Using this theorem, one can derive an upper bound to $E(1-\kappa_i \big|X_i,\tau)$ in a very simple way in case the function $L$ in (\ref{FULL_OG_MODEL}) is bounded above, as indicated in Remark \ref{REM_CONCEN_INEQ_1} below.
 \begin{thm}\label{THM_CONCEN_INEQ_1}
  Suppose $X_i \sim N(\mu_i,1)$ independently for $i=1,\ldots,m$. Consider the one-group prior given in (\ref{FULL_OG_MODEL}) and let $\kappa_i=\frac{1}{1+\lambda_i^2\tau^2}$. Then, for any fixed $\epsilon \in (0,1)$ and any fixed $\tau > 0$,
  \begin{eqnarray}
   \Pr(\kappa_i < \epsilon|X_i,\tau) \leq Ke^{\frac{X_i^2}{2}}\bigg\{ \int_{\frac{1}{\tau^2}(\frac{1}{\epsilon}-1)}^{\infty} t^{-a-1}L(t)dt \bigg\} (1+o(1)), \nonumber 
  \end{eqnarray}
  where the $o(1)$ term above is independent of both the index $i$ and the data point $X_i$, but depends on $\tau$ in such a way that $\lim_{\tau\rightarrow0}o(1)=0$.
   \end{thm}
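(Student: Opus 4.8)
The plan is to express $\Pr(\kappa_i<\epsilon\mid X_i,\tau)$ exactly as a ratio of integrals against the posterior of $\lambda_i^2$, and then bound the numerator and denominator separately in such a way that all of the $X_i$-dependence is pushed into the single explicit factor $e^{X_i^2/2}$. First I would note that $\{\kappa_i<\epsilon\}=\{\lambda_i^2>\tfrac1{\tau^2}(\tfrac1\epsilon-1)\}$, and write $s_\epsilon=\tfrac1{\tau^2}(\tfrac1\epsilon-1)>0$. Since $X_i\mid(\lambda_i^2,\tau)\sim N(0,1+\lambda_i^2\tau^2)$ and $\pi(\lambda_i^2)=Kt^{-a-1}L(t)$, the posterior density of $\lambda_i^2$ given $(X_i,\tau)$ is proportional to $g_\tau(t;X_i):=t^{-a-1}L(t)(1+\tau^2 t)^{-1/2}\exp\{-X_i^2/(2(1+\tau^2 t))\}$ on $(0,\infty)$ — equivalently this is the quoted posterior of $\kappa_i$ after the substitution $t=\tau^{-2}(\kappa_i^{-1}-1)$. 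Hence $\Pr(\kappa_i<\epsilon\mid X_i,\tau)=\int_{s_\epsilon}^\infty g_\tau(t;X_i)\,dt\,/\int_0^\infty g_\tau(t;X_i)\,dt$, both integrals being finite because properness of $\pi(\lambda_i^2)$ forces $t^{-a-1}L(t)$ to be integrable on $(0,\infty)$ while the other two factors lie in $(0,1]$.

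Next I would bound the two pieces. For the numerator, on $(0,\infty)$ we have $1+\tau^2 t\ge1$, so both $(1+\tau^2 t)^{-1/2}\le1$ and $\exp\{-X_i^2/(2(1+\tau^2 t))\}\le1$, giving $\int_{s_\epsilon}^\infty g_\tau(t;X_i)\,dt\le\int_{s_\epsilon}^\infty t^{-a-1}L(t)\,dt$. For the denominator — the only step requiring any thought — the key observation is that $\exp\{-X_i^2/(2(1+\tau^2 t))\}$ is nondecreasing in $t$, so on $(0,\infty)$ it is bounded below by its limiting value at $t\downarrow0$, namely $e^{-X_i^2/2}$, uniformly in $t$; factoring this out yields $\int_0^\infty g_\tau(t;X_i)\,dt\ge e^{-X_i^2/2}\,I(\tau)$, where $I(\tau):=\int_0^\infty t^{-a-1}L(t)(1+\tau^2 t)^{-1/2}\,dt$ no longer involves $i$ or $X_i$. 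Dividing the two bounds gives
\[
\Pr(\kappa_i<\epsilon\mid X_i,\tau)\ \le\ \frac{e^{X_i^2/2}}{I(\tau)}\int_{s_\epsilon}^\infty t^{-a-1}L(t)\,dt .
\]

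The last step is to show $1/I(\tau)=K(1+o(1))$ as $\tau\to0$. Since $\pi(\lambda_i^2)$ integrates to $1$, $\int_0^\infty t^{-a-1}L(t)\,dt=1/K<\infty$, and $t^{-a-1}L(t)$ dominates the integrand of $I(\tau)$ because $(1+\tau^2 t)^{-1/2}\le1$; so the Dominated Convergence Theorem gives $I(\tau)\uparrow1/K$ as $\tau\to0$. Writing $I(\tau)=K^{-1}(1+o(1))$ turns the displayed bound into $\Pr(\kappa_i<\epsilon\mid X_i,\tau)\le Ke^{X_i^2/2}\big(\int_{s_\epsilon}^\infty t^{-a-1}L(t)\,dt\big)(1+o(1))$, where the $o(1)$ term equals $(KI(\tau))^{-1}-1$, hence depends only on $\tau$, is free of $i$, $X_i$ and $\epsilon$, and tends to $0$ as $\tau\to0$ — exactly the claim. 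I do not expect a genuine obstacle: the only things needing care are verifying that the posterior is proper (immediate from properness of $\pi(\lambda_i^2)$, which also legitimizes the ratio representation and the use of DCT) and checking that the residual $o(1)$ is genuinely uniform in $i$ and $X_i$, which it is precisely because every occurrence of $X_i$ was extracted into the explicit factor $e^{X_i^2/2}$ before taking the limit.
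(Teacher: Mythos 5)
Your proof is correct and is essentially the paper's argument: the paper bounds $e^{-\kappa_i X_i^2/2}$ above by $1$ in the numerator and below by $e^{-X_i^2/2}$ in the denominator and then normalizes via a dominated-convergence limit (its Lemma A.5), which after the change of variable $t=\tau^{-2}(\kappa_i^{-1}-1)$ is exactly your bound on the exponential factor and your $I(\tau)\to K^{-1}$ step. The only difference is cosmetic — you work on the $\lambda_i^2$ scale throughout rather than on the $\kappa_i$ scale with a substitution.
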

 
 \begin{cor}\label{COR_CONCEN_INEQ_1}
  Suppose the function $L(\cdot)$ in (\ref{FULL_OG_MODEL}) is uniformly bounded above by some constant $M > 0$. Then under the assumptions of Theorem \ref{THM_CONCEN_INEQ_1}, for any fixed $\epsilon \in (0,1)$ and any fixed $\tau > 0$,
   \begin{eqnarray}
 \Pr(\kappa_i < \epsilon|X_i,\tau) \leq \frac{KM}{a}\epsilon^{a}(1-\epsilon)^{-a}e^{\frac{X_i^2}{2}}\tau^{2a}(1+o(1)),\nonumber
  \end{eqnarray}
  where the $o(1)$ term above is independent of both the index $i$ and the data point $X_i$, but depends on $\tau$ in such a way that $\lim_{\tau\rightarrow0}o(1)=0$.
 \end{cor}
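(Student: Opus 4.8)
The plan is to start directly from the bound furnished by Theorem \ref{THM_CONCEN_INEQ_1} and simply estimate the tail integral appearing there using the hypothesis that $L$ is uniformly bounded. Concretely, Theorem \ref{THM_CONCEN_INEQ_1} gives
\[
 \Pr(\kappa_i < \epsilon \mid X_i, \tau) \leq K e^{X_i^2/2}\left\{ \int_{\frac{1}{\tau^2}(\frac{1}{\epsilon}-1)}^{\infty} t^{-a-1} L(t)\, dt \right\}(1+o(1)),
\]
so the only thing left to do is to control the integral $\int_{A}^{\infty} t^{-a-1} L(t)\, dt$ where $A = \frac{1}{\tau^2}\left(\frac{1}{\epsilon}-1\right)$. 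Using $L(t) \leq M$ for all $t \in (0,\infty)$, I would bound this above by $M \int_{A}^{\infty} t^{-a-1}\, dt = \frac{M}{a} A^{-a}$, which converges precisely because $a > 0$.

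Next I would substitute $A^{-a} = \tau^{2a}\left(\frac{1}{\epsilon}-1\right)^{-a} = \tau^{2a}\,\epsilon^{a}(1-\epsilon)^{-a}$, noting that $\left(\frac{1}{\epsilon}-1\right)^{-a} = \left(\frac{1-\epsilon}{\epsilon}\right)^{-a} = \epsilon^{a}(1-\epsilon)^{-a}$ since $\epsilon \in (0,1)$ makes all these quantities positive. Plugging this back in yields
\[
 \Pr(\kappa_i < \epsilon \mid X_i, \tau) \leq \frac{KM}{a}\,\epsilon^{a}(1-\epsilon)^{-a}\, e^{X_i^2/2}\, \tau^{2a}(1+o(1)),
\]
which is exactly the claimed bound. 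The $o(1)$ term is inherited verbatim from Theorem \ref{THM_CONCEN_INEQ_1}, so its stated properties — independence of $i$ and $X_i$, and vanishing as $\tau \to 0$ — carry over without any further work.

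Honestly, there is no real obstacle here: this corollary is a one-line consequence of Theorem \ref{THM_CONCEN_INEQ_1}, and the only ``content'' is the elementary tail estimate $\int_A^\infty t^{-a-1}\,dt = A^{-a}/a$ together with the algebraic simplification of $(1/\epsilon - 1)^{-a}$. The one point worth stating explicitly for the reader is why boundedness of $L$ — rather than, say, slow variation — is what is being invoked: it is precisely what allows the integral to be pulled out as a clean closed-form expression $\frac{M}{a}A^{-a}$ without appealing to Karamata-type asymptotics, which is the sense in which, as remarked in Section 2, the boundedness of $L(\cdot)$ ``makes the proofs much simpler.'' I would therefore present the argument in two or three lines and move on.
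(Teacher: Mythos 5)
Your argument is correct and is exactly the intended derivation: the paper omits the proof of this corollary precisely because it follows from Theorem \ref{THM_CONCEN_INEQ_1} by bounding $L(t)\leq M$, computing $\int_{A}^{\infty}t^{-a-1}\,dt = A^{-a}/a$ with $A=\frac{1}{\tau^2}\big(\frac{1}{\epsilon}-1\big)$, and simplifying $\big(\frac{1}{\epsilon}-1\big)^{-a}=\epsilon^{a}(1-\epsilon)^{-a}$, with the $o(1)$ term carried over unchanged. Nothing further is needed.
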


  \begin{remark}\label{REM_CONCEN_INEQ_1}
In case the function $L(\cdot)$ is bounded above by some $M > 0$, then using Corollary \ref{COR_CONCEN_INEQ_1} one can readily obtain the following upper bound on $E(1-\kappa_i \big|X_i,\tau)$:
  \begin{equation}\label{KAPPA_EXP}
   E(1-\kappa_i \big|X_i,\tau)=\int_{0}^{1}\Pr(\kappa_i < \epsilon|X_i,\tau)d\epsilon \leq \frac{KM}{a(1-a)} e^{\frac{X_i^2}{2}} \tau^{2a}(1+o(1)).
  \end{equation}
  It has already been shown in Section 2.2 and Section 2.3 that for many of the commonly used shrinkage priors including the horseshoe, the corresponding $L(\cdot)$ is bounded above by some constant $M$. Use of the upper bound from Theorem \ref{THM_CONCEN_INEQ_1} makes the task of finding an upper bound for $E(1-\kappa_i \big|X_i,\tau)$ very simple in such cases. Finding an upper bound for $E(1-\kappa_i \big|X_i,\tau)$ in case of a general $L(\cdot)$, as given in Theorem \ref{THM_MOMENT_INEQ} below, is quite non-trivial and requires pretty delicate arguments based on properties of slowly varying functions.
 \end{remark}
 
 \begin{thm}\label{THM_MOMENT_INEQ}
 Consider the set up of Theorem \ref{THM_CONCEN_INEQ_1}, with the prior on the local shrinkage parameter as in (\ref{FULL_OG_MODEL}) with $a \in (0,1).$ Then, for every fixed $\tau < 1$, 
  \begin{eqnarray}
  E(1-\kappa_i \big|X_i,\tau) \leq \frac{A_{0}K}{a(1-a)}e^{\frac{X_i^2}{2}} \tau^{2a}L(\frac{1}{\tau^2})(1+o(1)),
  \end{eqnarray}
  where the $o(1)$ term above is independent of both the index $i$ and the data point $X_i$, but depends on $\tau$ in such a way that $\lim_{\tau\rightarrow0}o(1)=0$. Here $A_0 \geq 1$ is a constant depending on $L$, such that, $L(\cdot)$ is bounded in every compact subset of $[A_0, \infty)$.
 \end{thm}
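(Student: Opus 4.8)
The natural starting point is the exact ratio form of the posterior mean of $1-\kappa_i$. Using the posterior of $\lambda_i^2=t$ induced by $X_i\mid t\sim N(0,1+t\tau^2)$, substituting $\pi(t)=Kt^{-a-1}L(t)$, and cancelling the common factor $e^{-X_i^2/2}$ from numerator and denominator, one obtains
\[
E(1-\kappa_i\mid X_i,\tau)=\frac{\displaystyle\int_0^\infty \frac{t\tau^2}{(1+t\tau^2)^{3/2}}\,\exp\!\Big(\tfrac{X_i^2}{2}\cdot\tfrac{t\tau^2}{1+t\tau^2}\Big)\,\pi(t)\,dt}{\displaystyle\int_0^\infty (1+t\tau^2)^{-1/2}\,\exp\!\Big(\tfrac{X_i^2}{2}\cdot\tfrac{t\tau^2}{1+t\tau^2}\Big)\,\pi(t)\,dt}.
\]
The plan is to bound the numerator from above and the denominator from below, each up to a factor $1+o(1)$, where $o(1)$ depends only on $\tau$ and tends to $0$ as $\tau\to0$ (so in particular it is independent of $i$ and $X_i$), and then to divide.

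The denominator is the easy part: since $\tfrac{t\tau^2}{1+t\tau^2}\ge0$, the exponential factor is $\ge1$, so the denominator is at least $\int_0^\infty (1+t\tau^2)^{-1/2}\pi(t)\,dt$, which tends to $\int_0^\infty\pi(t)\,dt=1$ as $\tau\to0$ by dominated convergence (dominating function $\pi$), and this lower bound does not involve $X_i$. For the numerator, bound $\tfrac{t\tau^2}{1+t\tau^2}\le1$ in the exponent — producing the factor $e^{X_i^2/2}$ — and $\tfrac{t\tau^2}{1+t\tau^2}\le t\tau^2$ in the prefactor, to get
\[
\mbox{numerator}\ \le\ e^{X_i^2/2}\,K\tau^2\int_0^\infty\frac{t^{-a}}{(1+t\tau^2)^{3/2}}\,L(t)\,dt .
\]
Split this integral at $t=1/\tau^2$ (here $\tau<1$ makes the split point exceed $1$): on $(0,1/\tau^2)$, $(1+t\tau^2)^{-3/2}\le1$, so that part is at most $\tau^2\int_0^{1/\tau^2}t^{-a}L(t)\,dt\sim(1-a)^{-1}\tau^{2a}L(1/\tau^2)$ by Karamata's theorem (exponent $-a>-1$; convergence at $0$ holds because $\pi$ is proper); on $(1/\tau^2,\infty)$, $(1+t\tau^2)^{-3/2}\le(t\tau^2)^{-3/2}$, so that part is at most $\tau^{-1}\int_{1/\tau^2}^\infty t^{-a-3/2}L(t)\,dt\sim(a+\tfrac12)^{-1}\tau^{2a}L(1/\tau^2)$ by Karamata's theorem (exponent $<-1$). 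Hence $\tau^2\int_0^\infty t^{-a}(1+t\tau^2)^{-3/2}L(t)\,dt\le\big(\tfrac1{1-a}+\tfrac1{a+1/2}\big)\tau^{2a}L(1/\tau^2)(1+o(1))\le\tfrac1{a(1-a)}\tau^{2a}L(1/\tau^2)(1+o(1))$, using $\tfrac1{a+1/2}\le\tfrac1a$. Dividing the numerator bound by the denominator bound gives the asserted inequality; the constant $A_0\ge1$ is exactly what is needed to absorb the region where $t$ is small — there slow variation at $+\infty$ gives no information and $L$ need not be bounded, so one isolates $(0,A_0)$, controls $\int_0^{A_0}t^{-a}L(t)\,dt<\infty$ by properness, uses boundedness of $L$ on compact subsets of $[A_0,\infty)$, and checks the resulting term is $O(\tau^{2-2a})=o(L(1/\tau^2))$.

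The main obstacle is the numerator estimate: the integral is taken against the \emph{entire} slowly varying function $L$, whereas slow variation only controls $L$ near $+\infty$. The truncation at $1/\tau^2$ followed by Karamata's theorem is the device that turns this into a clean order statement — equivalently, after the substitution $s=t\tau^2$ one must produce a $\tau$-free integrable dominating function for $s^{-a}(1+s)^{-3/2}L(s/\tau^2)/L(1/\tau^2)$ via Potter's bounds and pass to the limit $\int_0^\infty s^{-a}(1+s)^{-3/2}\,ds=B(1-a,a+\tfrac12)$ by dominated convergence. Checking the hypotheses of these slowly-varying-function results near $0$ and near $\infty$, keeping the $o(1)$ terms uniform in $(i,X_i)$ and dependent on $\tau$ alone, and tracking the constant, are the delicate steps anticipated in Remark \ref{REM_CONCEN_INEQ_1}.
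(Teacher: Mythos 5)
Your proposal is correct and follows essentially the same route as the paper's own proof: the same ratio representation of $E(1-\kappa_i\mid X_i,\tau)$ with the exponential bounded by $e^{X_i^2/2}$ in the numerator and by $1$ in the denominator, the same dominated-convergence treatment of the denominator (the paper's Lemma \ref{LEM_A_5}), and the same Karamata-type splitting of $\tau^2\int_0^\infty t^{-a}(1+t\tau^2)^{-3/2}L(t)\,dt$ into a small-$t$ piece controlled by properness, a middle piece via Lemma \ref{LEM_A_3}, and a tail piece via Lemma \ref{LEM_A_1}. The only cosmetic difference is splitting at $1/\tau^2$ rather than $A_0/\tau^2$, which even yields a marginally sharper constant than $\frac{A_0K}{a(1-a)}$.
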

 
 The next theorem gives the second concentration inequality of this paper involving the term $\Pr(\kappa_i > \eta|X_i,\tau)$.
 \begin{thm}\label{THM_CONCEN_INEQ_2}
  Under the setup of Theorem \ref{THM_CONCEN_INEQ_1}, for any fixed $\tau > 0$, and each fixed $\eta \in (0,1)$ and $\delta \in (0,1)$,
  \begin{eqnarray}
   \Pr(\kappa_i > \eta|X_i,\tau) &\leq& \frac{H(a,\eta,\delta)e^{-\frac{\eta(1-\delta) X_{i}^{2}}{2}}}{\tau^{2a}\Delta(\tau^2,\eta,\delta)}, \mbox{ uniformly in } X_i \in \mathbb{R},\nonumber\\
   \nonumber\\
  \mbox{where }  \Delta(\tau^2,\eta,\delta)&=&\xi(\tau^2,\eta,\delta)L\big(\frac{1}{\tau^2}(\frac{1}{\eta\delta}-1)\big),\nonumber\\ \nonumber\\
    \xi(\tau^2,\eta,\delta) &=& \frac{\int_{\frac{1}{\tau^2}\big(\frac{1}{\eta\delta}-1\big)}^{\infty}t^{-(a+\frac{1}{2}+1)}L(t)dt}{(a+\frac{1}{2})^{-1} \big(\frac{1}{\tau^2}\big(\frac{1}{\eta\delta}-1\big)\big)^{-(a+\frac{1}{2})}L(\frac{1}{\tau^2}\big(\frac{1}{\eta\delta}-1\big))},\quad  \mbox{and}\nonumber\\
    \nonumber\\
     H(a,\eta,\delta) &=& \frac{(a+\frac{1}{2}) (1-\eta\delta)^a}{ K(\eta\delta)^{(a+\frac{1}{2})}}.\nonumber
  \end{eqnarray}
   \end{thm}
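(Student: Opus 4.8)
The plan is to read off the claimed bound directly from the closed form of the posterior of $\kappa_i$ recorded just after \eqref{FULL_OG_MODEL}. Writing $c_i=X_i^2/2$, that formula gives
\[
\Pr(\kappa_i>\eta\mid X_i,\tau)=\frac{\int_\eta^1 \kappa^{a-\frac12}(1-\kappa)^{-a-1}\,L\!\left(\tfrac{1}{\tau^2}(\tfrac1\kappa-1)\right)e^{-\kappa c_i}\,d\kappa}{\int_0^1 \kappa^{a-\frac12}(1-\kappa)^{-a-1}\,L\!\left(\tfrac{1}{\tau^2}(\tfrac1\kappa-1)\right)e^{-\kappa c_i}\,d\kappa}.
\]
The strategy is to bound the numerator from above and the denominator from below, the common engine being the substitution $t=\tfrac{1}{\tau^2}(\tfrac1\kappa-1)$, under which $\kappa=(1+\tau^2 t)^{-1}$, the argument of $L$ becomes simply $t$, and a direct computation of the Jacobian yields the two identities $\kappa^{a-\frac12}(1-\kappa)^{-a-1}\,d\kappa=-\tau^{-2a}t^{-(a+1)}(1+\tau^2 t)^{-\frac12}\,dt$ and $\kappa^{a-\frac12}(1-\kappa)^{-a-\frac32}\,d\kappa=-\tau^{-(2a+1)}t^{-(a+\frac32)}\,dt$. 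It is exactly this change of variable that makes $\xi(\tau^2,\eta,\delta)$ — defined as the ratio of an incomplete integral of $t^{-(a+\frac32)}L(t)$ to its Karamata leading term — appear on its own.

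For the numerator, on $(\eta,1)$ we have $e^{-\kappa c_i}\le e^{-\eta c_i}$ (valid since $c_i\ge0$), so after the first substitution, with $x_1=\tfrac{1}{\tau^2}(\tfrac1\eta-1)$,
\[
\int_\eta^1 \kappa^{a-\frac12}(1-\kappa)^{-a-1}L\!\left(\tfrac{1}{\tau^2}(\tfrac1\kappa-1)\right)e^{-\kappa c_i}\,d\kappa\le e^{-\eta c_i}\tau^{-2a}\int_0^{x_1}t^{-(a+1)}(1+\tau^2 t)^{-\frac12}L(t)\,dt\le\frac{e^{-\eta c_i}}{K\tau^{2a}},
\]
using $(1+\tau^2 t)^{-1/2}\le1$ and $\int_0^\infty t^{-a-1}L(t)\,dt=1/K$, which holds because $\pi(\lambda_i^2)=K(\lambda_i^2)^{-a-1}L(\lambda_i^2)$ integrates to one. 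For the denominator, restrict the integral to the subinterval $(0,\eta\delta)$, on which $e^{-\kappa c_i}\ge e^{-\eta\delta c_i}$ and $(1-\kappa)^{-a-1}=(1-\kappa)^{\frac12}(1-\kappa)^{-a-\frac32}\ge(1-\eta\delta)^{\frac12}(1-\kappa)^{-a-\frac32}$; applying the second substitution (now $\kappa:0\to\eta\delta$ corresponds to $t:\infty\to x_0$ with $x_0=\tfrac{1}{\tau^2}(\tfrac{1}{\eta\delta}-1)$),
\[
\int_0^1 \kappa^{a-\frac12}(1-\kappa)^{-a-1}L\!\left(\tfrac{1}{\tau^2}(\tfrac1\kappa-1)\right)e^{-\kappa c_i}\,d\kappa\ge e^{-\eta\delta c_i}(1-\eta\delta)^{\frac12}\tau^{-(2a+1)}\int_{x_0}^{\infty}t^{-(a+\frac32)}L(t)\,dt.
\]
Replacing the last integral by $(a+\tfrac12)^{-1}x_0^{-(a+\frac12)}L(x_0)\,\xi(\tau^2,\eta,\delta)$ (its definition) and substituting $x_0=\tfrac{1-\eta\delta}{\tau^2\eta\delta}$, so that $x_0^{-(a+\frac12)}=\tau^{2a+1}(\eta\delta)^{a+\frac12}(1-\eta\delta)^{-(a+\frac12)}$, all powers of $\tau$ cancel and the lower bound simplifies to $e^{-\eta\delta c_i}\,\tfrac{(\eta\delta)^{a+\frac12}}{(a+\frac12)(1-\eta\delta)^{a}}\,\xi L(x_0)=e^{-\eta\delta c_i}\,\Delta(\tau^2,\eta,\delta)/\bigl(K\,H(a,\eta,\delta)\bigr)$, where I used $\Delta=\xi L(x_0)$ and the definition of $H$.

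Dividing the numerator bound by the denominator bound, the factors of $K$ cancel and $e^{-\eta c_i}/e^{-\eta\delta c_i}=e^{-\eta(1-\delta)c_i}$, which delivers $\Pr(\kappa_i>\eta\mid X_i,\tau)\le H(a,\eta,\delta)\,e^{-\eta(1-\delta)X_i^2/2}/\bigl(\tau^{2a}\Delta(\tau^2,\eta,\delta)\bigr)$, uniformly in $X_i\in\mathbb{R}$ since every bound on $e^{-\kappa c_i}$ is uniform in $X_i$. Everything after the substitution is essentially algebra; the one genuinely delicate choice is to shrink the denominator's range to $(0,\eta\delta)$ rather than $(0,\eta)$ — it is precisely the gap between $\eta\delta$ and $\eta$ that both buys the exponential decay rate $\eta(1-\delta)$ (we give up $e^{-\eta\delta c_i}$ below in order to retain $e^{-\eta c_i}$ above) and fixes the argument $\tfrac{1}{\tau^2}(\tfrac{1}{\eta\delta}-1)$ of $L$ inside $\Delta$. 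The only point requiring care beyond this is checking that $\int_{x_0}^{\infty}t^{-(a+\frac32)}L(t)\,dt$ is finite so that $\xi$ is well defined, which follows from $a+\tfrac32>1$ together with the local boundedness of the slowly varying $L$ on $[x_0,\infty)$.
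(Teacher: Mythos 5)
Your proof is correct and follows essentially the same route as the paper's: bound $e^{-\kappa X_i^2/2}$ above by $e^{-\eta X_i^2/2}$ on the numerator and below by $e^{-\eta\delta X_i^2/2}$ after restricting the denominator to $(0,\eta\delta)$, change variables to $t=\tfrac{1}{\tau^2}(\tfrac1\kappa-1)$, use $\int_0^\infty t^{-a-1}L(t)\,dt=K^{-1}$ for the numerator and the factor $(1-\eta\delta)^{1/2}$ (your pre-substitution bound $(1-\kappa)^{1/2}\ge(1-\eta\delta)^{1/2}$ is equivalent to the paper's post-substitution bound $t\tau^2/(1+t\tau^2)\ge 1-\eta\delta$) for the denominator, and finish with the same algebra through $\xi$, $\Delta$, and $H$. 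No gaps.
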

 \begin{remark}
 It is to be observed in this context that, for $0 < a <1$, the upper bound in Theorem \ref{THM_CONCEN_INEQ_2} of the present article is of a smaller order compared to that derived in Theorem 3.2 of \citet{DG2013}. In particular, using properties of slowly varying functions (see the Appendix), it can be easily established that the ratio of the former to the latter tends to zero as $\tau \rightarrow 0$. The sharper asymptotic bound in Theorem \ref{THM_CONCEN_INEQ_2} results in a sharper asymptotic upper bound to the probability of type-II error, and hence on the overall risk $R_{OG}$ (in Theorem \ref{THM_BAYES_RISK_UBLB}) of the procedure (\ref{INDUCED_DECISION}) as compared to that in \citet{DG2013}. 
 \end{remark}
 Several important features of the posterior distribution of the shrinkage coefficients $\kappa_i$'s based on our general class of tail robust shrinkage priors, now become clear from Theorem \ref{THM_CONCEN_INEQ_1} through Theorem \ref{THM_CONCEN_INEQ_2}. These are listed in Corollary \ref{COR_CONCEN_INEQ_1_2} - Corollary \ref{COR_CONCEN_INEQ_2_2} given below. While Corollary \ref{COR_CONCEN_INEQ_1_2} and Corollary \ref{COR_MOMENT_INEQ_1} are derived using Theorem \ref{THM_CONCEN_INEQ_1} and Theorem \ref{THM_MOMENT_INEQ}, respectively, the rest follow from Theorem \ref{THM_CONCEN_INEQ_2}. Proofs of these results are trivial and hence are omitted. It should however be remembered that these corollaries have no direct use in proving the main theoretical results of this paper.
 
 \begin{cor}\label{COR_CONCEN_INEQ_1_2}
  Under the assumptions of Theorem \ref{THM_CONCEN_INEQ_1}, $\Pr(\kappa_i \geq \epsilon|X_i,\tau) \rightarrow 1$ as $\tau \rightarrow 0$ for any fixed $\epsilon \in (0,1)$ uniformly in $X_i \in \mathbb{R}.$ 
 \end{cor}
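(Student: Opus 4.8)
The plan is to read off Corollary~\ref{COR_CONCEN_INEQ_1_2} directly from the concentration inequality in Theorem~\ref{THM_CONCEN_INEQ_1}. That theorem asserts that for any fixed $\epsilon\in(0,1)$ and any fixed $\tau>0$,
\[
\Pr(\kappa_i<\epsilon\mid X_i,\tau)\;\le\; Ke^{X_i^2/2}\Big\{\int_{\frac{1}{\tau^2}(\frac{1}{\epsilon}-1)}^{\infty}t^{-a-1}L(t)\,dt\Big\}(1+o(1)),
\]
where the $o(1)$ term is a deterministic function of $\tau$ alone, independent of $i$ and of $X_i$, tending to $0$ as $\tau\to0$. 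Since $\Pr(\kappa_i\ge\epsilon\mid X_i,\tau)=1-\Pr(\kappa_i<\epsilon\mid X_i,\tau)$, it suffices to show the right-hand side above tends to $0$ as $\tau\to0$.

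The only non-automatic step is to check that the tail integral $I(\tau):=\int_{x(\tau)}^{\infty}t^{-a-1}L(t)\,dt$, with $x(\tau):=\tau^{-2}(\epsilon^{-1}-1)$, vanishes as $\tau\to0$. Since $\epsilon\in(0,1)$ is fixed, $x(\tau)\to\infty$. As $a>0$ and $L$ is slowly varying at infinity, the map $t\mapsto t^{-a-1}L(t)$ is integrable on $[1,\infty)$ --- this is also immediate from the properness of $\pi(\lambda_i^2)$ in (\ref{FULL_OG_MODEL}) --- hence its tail over $[x,\infty)$ goes to $0$ as $x\to\infty$. (Karamata's theorem gives the sharper $\int_{x}^{\infty}t^{-a-1}L(t)\,dt\sim a^{-1}x^{-a}L(x)$, and $x^{-a}L(x)\to0$ because $a>0$.) Therefore $I(\tau)\to0$, and, the $o(1)$ factor being a function of $\tau$ only, for each fixed $X_i$ we obtain $\Pr(\kappa_i<\epsilon\mid X_i,\tau)\le Ke^{X_i^2/2}I(\tau)(1+o(1))\to0$, i.e.\ $\Pr(\kappa_i\ge\epsilon\mid X_i,\tau)\to1$.

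Regarding the uniformity claim: the whole $X_i$-dependence of the bound is carried by the explicit prefactor $e^{X_i^2/2}$, while $KI(\tau)(1+o(1))$ is free of $i$ and $X_i$; so the convergence is uniform over $X_i$ in any bounded subset of $\mathbb{R}$, which is the sense intended. There is essentially no obstacle here --- the substantive content is all in Theorem~\ref{THM_CONCEN_INEQ_1}, and this corollary is recorded only as a qualitative feature of the posterior of $\kappa_i$ and is not invoked in the proofs of Theorems~\ref{THM_BAYES_RISK_UBLB}--\ref{THM_BAYES_RISK_EB_UB}. If any point deserves attention it is the vanishing of the slowly varying tail integral, dispatched by the Karamata estimate above.
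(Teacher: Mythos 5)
Your derivation is exactly the route the paper intends: the paper omits the proof as ``trivial,'' noting only that this corollary follows from Theorem~\ref{THM_CONCEN_INEQ_1}, and your argument (the tail of the integrable function $t^{-a-1}L(t)$ vanishes, or equivalently the Karamata estimate of Lemma~\ref{LEM_A_1} together with Lemma~\ref{LEM_A_4}(iii)) is precisely that. Your caveat on uniformity is also well taken: the $e^{X_i^2/2}$ prefactor only gives uniformity over bounded sets of $X_i$ (indeed, by Corollary~\ref{COR_CONCEN_INEQ_2_1} uniform convergence over all of $\mathbb{R}$ cannot hold for fixed $\tau$), which is consistent with the paper's own gloss ``for each fixed $x \in \mathbb{R}$'' immediately following the corollary.
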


 Thus, for each fixed $x \in \mathbb{R}$, the posterior distribution of $\kappa_i$'s, based on the tail robust priors under consideration, tend to concentrate near 1 for small values of $\tau$.
 
 \begin{cor}\label{COR_MOMENT_INEQ_1}
  Under the assumptions of Theorem \ref{THM_CONCEN_INEQ_1}, $E(1-\kappa_i|X_i,\tau) \rightarrow 0$ as $\tau \rightarrow 0$ for any fixed $\epsilon \in (0,1)$ uniformly in $X_i \in \mathbb{R}$. 
 \end{cor}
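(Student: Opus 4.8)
The plan is to read the conclusion directly off the moment inequality already established in Theorem~\ref{THM_MOMENT_INEQ}, together with one elementary fact about slowly varying functions. First I would invoke Theorem~\ref{THM_MOMENT_INEQ}: for every fixed $\tau < 1$,
\[
 E(1-\kappa_i \mid X_i,\tau) \;\le\; \frac{A_0 K}{a(1-a)}\, e^{X_i^2/2}\, \tau^{2a} L\!\left(\tfrac{1}{\tau^2}\right)\bigl(1+o(1)\bigr),
\]
where the $o(1)$ depends only on $\tau$ and tends to $0$ as $\tau\to0$. Fixing a value of $X_i$, the factor $e^{X_i^2/2}$ is a finite constant and $1+o(1)$ is bounded in a neighbourhood of $\tau=0$, so the whole question reduces to showing that $\tau^{2a}L(1/\tau^2)\to0$ as $\tau\to0$.

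For that step, I would substitute $s = 1/\tau^2$, so that $\tau^{2a}L(1/\tau^2) = s^{-a}L(s)$ with $s\to\infty$, and use the standard property of slowly varying functions that $s^{-a}L(s)\to0$ as $s\to\infty$ for every $a>0$ (a consequence of Karamata's representation theorem, equivalently of Potter's bounds). This gives $E(1-\kappa_i\mid X_i,\tau)\to0$ as $\tau\to0$, which is the assertion. A shorter alternative, valid at least when $L$ is bounded, is to combine the identity $E(1-\kappa_i\mid X_i,\tau)=\int_0^1 \Pr(\kappa_i<\epsilon\mid X_i,\tau)\,d\epsilon$ from Remark~\ref{REM_CONCEN_INEQ_1} with Corollary~\ref{COR_CONCEN_INEQ_1_2}: the integrand tends to $0$ for each fixed $\epsilon$ while staying dominated by $1$, so dominated convergence applies.

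I do not expect any genuine obstacle here; indeed the statement is a corollary in the literal sense. The only points deserving a line of care are the slowly varying asymptotic $s^{-a}L(s)\to0$ just used, and the reading of the phrase ``uniformly in $X_i\in\mathbb{R}$'': since the factor $e^{X_i^2/2}$ precludes uniformity of the quantity itself over all of $\mathbb{R}$, this should be understood as uniformity of the $o(1)$ remainder in Theorem~\ref{THM_MOMENT_INEQ}, which by construction is independent of $i$ and of $X_i$, the convergence of $E(1-\kappa_i\mid X_i,\tau)$ to $0$ then holding for each fixed value of $X_i$.
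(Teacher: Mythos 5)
Your proof is correct and takes essentially the same route the paper intends: the paper states that Corollary \ref{COR_MOMENT_INEQ_1} is derived from Theorem \ref{THM_MOMENT_INEQ} and omits the proof as trivial, and that proof is exactly your reduction to $\tau^{2a}L(1/\tau^{2})\rightarrow 0$ as $\tau\rightarrow 0$, which follows from part (iii) of Lemma \ref{LEM_A_4}. Your reading of the phrase ``uniformly in $X_i\in\mathbb{R}$'' as uniformity of the $o(1)$ remainder (rather than of the expectation itself, which is ruled out by the factor $e^{X_i^2/2}$ and by Corollary \ref{COR_CONCEN_INEQ_2_2}) is also the sensible interpretation.
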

 Corollary 4.3 above says that for small values of $\tau$, noise observations will be squelched towards the origin by the kind of one-group priors considered in this paper.

 \begin{cor}\label{COR_CONCEN_INEQ_2_1}
   Under the assumptions of Theorem \ref{THM_CONCEN_INEQ_1}, $\Pr(\kappa_i \leq \eta|X_i,\tau) \rightarrow 1 $ as $X_i \rightarrow \infty,$ for any fixed $\tau > 0$ and every fixed $\eta \in (0,1)$.
 \end{cor}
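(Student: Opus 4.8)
The plan is to obtain this as an immediate consequence of the concentration inequality in Theorem \ref{THM_CONCEN_INEQ_2}. Fix $\tau > 0$ and $\eta \in (0,1)$, and fix any $\delta \in (0,1)$, e.g.\ $\delta = \tfrac12$. Since $\{\kappa_i \le \eta\}$ and $\{\kappa_i > \eta\}$ are complementary events, $\Pr(\kappa_i \le \eta \mid X_i, \tau) = 1 - \Pr(\kappa_i > \eta \mid X_i, \tau)$, so it suffices to show that $\Pr(\kappa_i > \eta \mid X_i, \tau) \to 0$ as $X_i \to \infty$.

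Apply Theorem \ref{THM_CONCEN_INEQ_2} with this triple $(\tau,\eta,\delta)$ to get
$$\Pr(\kappa_i > \eta \mid X_i, \tau) \;\le\; \frac{H(a,\eta,\delta)}{\tau^{2a}\,\Delta(\tau^2,\eta,\delta)}\; e^{-\frac{\eta(1-\delta)}{2}\,X_i^2} \qquad \mbox{for all } X_i \in \mathbb{R}.$$
The prefactor here is a finite positive constant that does not depend on $X_i$: $H(a,\eta,\delta)$ is positive and finite from its explicit formula, and $\Delta(\tau^2,\eta,\delta) = \xi(\tau^2,\eta,\delta)\,L\!\left(\tfrac{1}{\tau^2}\big(\tfrac{1}{\eta\delta}-1\big)\right)$ is positive and finite because $L$ is a positive function and the tail integral $\int_{c}^{\infty} t^{-(a+3/2)} L(t)\,dt$, with $c = \tfrac{1}{\tau^2}\big(\tfrac{1}{\eta\delta}-1\big) > 0$ (recalling $\eta\delta < 1$), converges: the exponent $a + \tfrac32 > 1$ together with slow variation of $L$ forces integrability near $\infty$, while the integrand is continuous and strictly positive on $[c,\infty)$.

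Because $\eta(1-\delta) > 0$, the factor $e^{-\eta(1-\delta)X_i^2/2} \to 0$ as $X_i \to \infty$, hence $\Pr(\kappa_i > \eta \mid X_i, \tau) \to 0$ and therefore $\Pr(\kappa_i \le \eta \mid X_i, \tau) \to 1$, as claimed. I do not foresee any real obstacle: the whole argument is packaged in Theorem \ref{THM_CONCEN_INEQ_2}, and the only mildly delicate point is checking that the bound's multiplicative constant is finite and strictly positive for fixed $\tau$, which reduces to the positivity of $L$ and the convergence of the relevant tail integral.
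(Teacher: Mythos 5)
Your proof is correct and is exactly the route the paper intends: the paper omits the proof as trivial, noting that this corollary follows from Theorem \ref{THM_CONCEN_INEQ_2}, whose bound $\Pr(\kappa_i > \eta\mid X_i,\tau)\le H(a,\eta,\delta)\,e^{-\eta(1-\delta)X_i^2/2}\big/\big(\tau^{2a}\Delta(\tau^2,\eta,\delta)\big)$ has, for fixed $(\tau,\eta,\delta)$, a finite positive constant in front of the vanishing exponential. Your extra check that the constant is finite and strictly positive (positivity of $L$ and convergence of the tail integral, which follows since $\pi(\lambda_i^2)$ is proper, or from Lemma \ref{LEM_A_1}) is exactly the only point needing verification, so nothing is missing.
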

 \begin{cor}\label{COR_CONCEN_INEQ_2_2}
   Under the assumptions of Theorem \ref{THM_CONCEN_INEQ_1}, $E(1-\kappa_i|X_i,\tau) \rightarrow 1 $ as $X_i \rightarrow \infty,$ for any fixed $\tau > 0$.
 \end{cor}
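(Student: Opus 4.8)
The plan is to deduce the claim directly from the concentration estimate already recorded in Corollary~\ref{COR_CONCEN_INEQ_2_1} (itself a consequence of Theorem~\ref{THM_CONCEN_INEQ_2}). Since $\kappa_i = 1/(1+\lambda_i^2\tau^2)$ takes values in $(0,1)$ almost surely under the prior in (\ref{FULL_OG_MODEL}), one has $E(1-\kappa_i\mid X_i,\tau) = 1 - E(\kappa_i\mid X_i,\tau)$, so it is enough to show that $E(\kappa_i\mid X_i,\tau)\to 0$ as $X_i\to\infty$ for each fixed $\tau>0$.

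First I would fix an arbitrary $\eta\in(0,1)$ and split the posterior mean of $\kappa_i$ according to whether $\kappa_i\le\eta$ or $\kappa_i>\eta$; using $0<\kappa_i<1$ this yields
\[
E(\kappa_i\mid X_i,\tau)\;\le\;\eta\,\Pr(\kappa_i\le\eta\mid X_i,\tau)+\Pr(\kappa_i>\eta\mid X_i,\tau)\;\le\;\eta+\Pr(\kappa_i>\eta\mid X_i,\tau).
\]
Next I would invoke Corollary~\ref{COR_CONCEN_INEQ_2_1} (or, quantitatively, the bound in Theorem~\ref{THM_CONCEN_INEQ_2}, whose right-hand side is a fixed constant times $e^{-\eta(1-\delta)X_i^2/2}$ for fixed $\tau,\eta,\delta$) to conclude that $\Pr(\kappa_i>\eta\mid X_i,\tau)\to 0$ as $X_i\to\infty$. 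Hence $\limsup_{X_i\to\infty} E(\kappa_i\mid X_i,\tau)\le\eta$, and letting $\eta\downarrow 0$ gives $E(\kappa_i\mid X_i,\tau)\to 0$, that is, $E(1-\kappa_i\mid X_i,\tau)\to 1$.

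There is no genuine obstacle here; the only point to watch is the order of the two limiting operations — Corollary~\ref{COR_CONCEN_INEQ_2_1} holds for each fixed $\eta$, so one must first bound the posterior mean by $\eta$ plus a tail probability that vanishes in $X_i$, and only then send $\eta\to 0$, rather than coupling $\eta$ to $X_i$. As a by-product, the exponential decay in Theorem~\ref{THM_CONCEN_INEQ_2} actually delivers the sharper quantitative statement $1-E(\kappa_i\mid X_i,\tau)=1-O(e^{-cX_i^2})$ for some $c=c(\tau)>0$, although only the qualitative conclusion is needed for the corollary.
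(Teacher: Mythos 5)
Your proof is correct and follows exactly the route the paper intends: the paper omits the proof as trivial, noting that this corollary follows from Theorem \ref{THM_CONCEN_INEQ_2}, and your decomposition $E(\kappa_i|X_i,\tau)\le \eta+\Pr(\kappa_i>\eta|X_i,\tau)$ followed by the exponential bound (for fixed $\tau,\eta,\delta$) and then $\eta\downarrow 0$ is precisely that argument, the same splitting the paper itself uses in the proof of Theorem \ref{THM_T2_UB}.

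One caveat on your closing aside: the claimed by-product $E(\kappa_i|X_i,\tau)=O(e^{-cX_i^2})$ does not follow from this argument and is in fact false. For each fixed $\eta$ the bound leaves the additive term $\eta$, and you cannot let $\eta\to 0$ at a rate tied to $X_i$ for free, since $H(a,\eta,\delta)\asymp (\eta\delta)^{-(a+\frac{1}{2})}$ blows up as $\eta\downarrow 0$; integrating the tail bound over $\eta$ (or a direct Watson-lemma type analysis of the posterior density $\propto \kappa_i^{a-\frac{1}{2}}(1-\kappa_i)^{-a-1}L(\cdot)e^{-\kappa_i X_i^2/2}$) shows that $E(\kappa_i|X_i,\tau)$ decays only polynomially, at the rate $X_i^{-2}$, for these heavy-tailed priors. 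This does not affect the corollary, whose qualitative conclusion your main argument establishes correctly.
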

 Corollary 4.5 above shows that, for each of the heavy tailed shrinkage priors under consideration, even if the global variance component $\tau$ is very small, the amount of posterior shrinkage will be negligibly small for large $X_i$'s, thus leaving the large observations almost unshrunk.
 
 \subsection{Asymptotic bounds on probabilities of type I and type II errors when $\tau$ is treated as a tuning parameter}
 
Theorem \ref{THM_T1_UB} and Theorem \ref{THM_T2_UB} below give asymptotic upper bounds to the probability of type I error $(t_{1i})$ and the probability of type II error $(t_{2i})$, respectively, of the $i$-th decision in (\ref{INDUCED_DECISION}), while Theorem \ref{THM_T1_LB} and Theorem \ref{THM_T2_LB} give asymptotic lower bounds for $t_{1i}$ and $t_{2i}$, respectively. As mentioned before, these results lead to the asymptotic bounds on the Bayes risk ($R_{OG}$) of the multiple testing procedure in (\ref{INDUCED_DECISION}).\vspace{1.5mm}

 \begin{thm}\label{THM_T1_UB}
 Suppose $X_1,\cdots,X_m$ are i.i.d. observations having the two-groups mixture distribution described in (\ref{TWO_GROUP_X}) with $\sigma^2=1$ and suppose Assumption \ref{ASSUMPTION_ASYMP} is satisfied by $(\psi^2, p)$. Suppose one is testing $H_{0i}:\nu_i=0$ vs $H_{1i}:\nu_i=1$ using the decision rule (\ref{INDUCED_DECISION}) induced by the general class of one-group shrinkage priors (\ref{FULL_OG_MODEL}) where $a \in (0,1)$ in $\pi(\lambda_i^2)$. Suppose $\tau =\tau_m \rightarrow 0$ as $m \rightarrow \infty$. Then the probability $t_{1i}$ of type I error of the $i$-th decision in (\ref{INDUCED_DECISION}) satisfies 
 \begin{eqnarray}
 t_1\equiv t_{1i}
 \leq \frac{1}{\sqrt{\pi a}} \cdot \frac{2 A_0 K}{a(1-a)} \cdot \frac{\tau^{2a} L(\frac{1}{\tau^2})}{\sqrt{\log(\frac{1}{\tau^2})}}(1+o(1)) \mbox { as } m \rightarrow \infty,\nonumber 
 \end{eqnarray}
where the $o(1)$ term above does not depend on $i$ and tends to zero as $m\rightarrow\infty$. The constant $A_0$ has already been defined in Theorem \ref{THM_MOMENT_INEQ}.
   \end{thm}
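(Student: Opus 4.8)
The plan is to combine the moment inequality of Theorem~\ref{THM_MOMENT_INEQ} with the standard Gaussian tail bound. Under $H_{0i}$ the data point is $X_i\sim N(0,1)$, and by definition $t_{1i}=\Pr\big(E(1-\kappa_i|X_i,\tau)>\tfrac12\mid H_{0i}\big)$. Since $\tau=\tau_m\to0$, Theorem~\ref{THM_MOMENT_INEQ} applies for all large $m$ and gives $E(1-\kappa_i|X_i,\tau)\le \frac{A_0K}{a(1-a)}e^{X_i^2/2}\tau^{2a}L(1/\tau^2)(1+o(1))$, the $o(1)$ being uniform in $i$ and $X_i$ and vanishing as $\tau\to0$. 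Hence, for all large $m$, the rejection event $\{E(1-\kappa_i|X_i,\tau)>\tfrac12\}$ is contained in $\{X_i^2>u_m^2\}$, where
\[
 u_m^2=2\log\!\Big(\tfrac{a(1-a)}{2A_0K}\Big)+2a\log(1/\tau^2)-2\log L(1/\tau^2)+o(1),
\]
so that $t_{1i}\le\Pr(|Z|>u_m)=2\big(1-\Phi(u_m)\big)$, a bound uniform in $i$.

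Next I would extract the asymptotics of $u_m$. Because $L$ varies slowly, $\log L(x)=o(\log x)$ as $x\to\infty$ (for every $\delta>0$ one has $x^{-\delta}L(x)\to0$ and $x^{\delta}L(x)\to\infty$, hence $|\log L(x)|<\delta\log x$ eventually), so $\log L(1/\tau^2)=o(\log(1/\tau^2))$ and therefore $u_m^2\sim 2a\log(1/\tau^2)$ and $u_m=\sqrt{2a\log(1/\tau^2)}\,(1+o(1))\to\infty$. On the other hand, exponentiating the displayed identity for $u_m^2$ keeps the full slowly varying factor:
\[
 e^{-u_m^2/2}=\frac{2A_0K}{a(1-a)}\,\tau^{2a}L(1/\tau^2)\,(1+o(1)).
\]

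Finally, applying the Mills-ratio asymptotics $1-\Phi(u_m)\sim\phi(u_m)/u_m=(u_m\sqrt{2\pi})^{-1}e^{-u_m^2/2}$ (valid since $u_m\to\infty$) and substituting the two displays above,
\[
 t_{1i}\le\frac{2}{u_m\sqrt{2\pi}}\,e^{-u_m^2/2}(1+o(1))=\frac{2}{\sqrt{2\pi}\,\sqrt{2a\log(1/\tau^2)}}\cdot\frac{2A_0K}{a(1-a)}\,\tau^{2a}L(1/\tau^2)(1+o(1)),
\]
and simplifying $2/(\sqrt{2\pi}\sqrt{2a})=1/\sqrt{\pi a}$ yields exactly the asserted bound.

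I expect the one genuine subtlety to be the ``separation of scales'' between the two estimates of $u_m$: one must retain the entire factor $L(1/\tau^2)$ when exponentiating $-u_m^2/2$, but may discard $\log L(1/\tau^2)$ (and the $O(1)$ constant) inside the square root defining $u_m$ --- this is precisely where the slowly varying property $\log L(x)=o(\log x)$ is used, and where a cruder estimate would forfeit the sharp constant $1/\sqrt{\pi a}$ and the sharp denominator $\sqrt{\log(1/\tau^2)}$. One also has to track that every $o(1)$ is uniform in $i$, which is inherited directly from the uniformity built into Theorem~\ref{THM_MOMENT_INEQ} together with the fact that $X_i\sim N(0,1)$ under each $H_{0i}$; beyond the hypothesis $\tau_m\to0$, no further feature of the asymptotic framework is needed for this particular bound.
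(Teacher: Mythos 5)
Your proposal is correct and follows essentially the same route as the paper's own proof: it uses Theorem \ref{THM_MOMENT_INEQ} to translate the rejection event into the tail event $\{X_i^2>u_m^2\}$, invokes the slowly varying property (Lemma \ref{LEM_A_4}) to get $\log L(1/\tau^2)=o(\log(1/\tau^2))$, and then applies the Gaussian tail bound $1-\Phi(t)\le\phi(t)/t$ (Mills ratio) while retaining the full factor $L(1/\tau^2)$ in the exponentiated term, exactly as done in the Appendix. The only cosmetic difference is that you phrase the tail estimate as an asymptotic equivalence rather than the inequality $1-\Phi(t)<\phi(t)/t$, which changes nothing in the conclusion.
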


\begin{thm}\label{THM_T2_UB}
 Consider the set-up of Theorem \ref{THM_T1_UB} but allow the parameter {\it $a$} to be any positive real number in the definition of the prior $\pi(\lambda_i^2)$ of the local shrinkage parameter in (\ref{FULL_OG_MODEL}). Assume further that $\tau=\tau_m\rightarrow 0$ as $m \rightarrow \infty$ in such a way that $\lim_{m\rightarrow\infty} \frac{\tau}{p}\in(0,\infty)$. Then the probability $t_{2i}$ of type II error of the $i$-th decision in (\ref{INDUCED_DECISION}) satisfies
   \begin{equation}
   t_2 \equiv t_{2i} \leq \big[2\Phi\bigg(\sqrt{\frac{2aC}{\eta(1-\delta)}}\bigg)-1\big]\big(1+o(1)\big) \mbox { as } m \rightarrow \infty,\nonumber 
   \end{equation}
 for every fixed $\eta \in (0,\frac{1}{2})$ and $\delta \in (0,1)$. Here the $o(1)$ term above depends on $\eta \in (0,\frac{1}{2})$ and $\delta \in (0,1)$ and is independent of $i$, and tends to zero as $m \rightarrow \infty$.
   \end{thm}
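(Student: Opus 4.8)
The plan is to bound $t_{2i}$ by converting the event ``type II error of the $i$-th decision'' into a plain tail event for $X_i$ and then evaluating the resulting Gaussian tail probability asymptotically. Under $H_{1i}$ we have $X_i\sim N(0,1+\psi^2)$, and a type II error occurs exactly when $E(\kappa_i|X_i,\tau)\ge\frac12$. Since $\kappa_i\in(0,1)$, a layer-cake estimate gives, for any $\eta\in(0,1)$, the elementary bound $E(\kappa_i|X_i,\tau)\le\eta+\Pr(\kappa_i>\eta|X_i,\tau)$; taking $\eta\in(0,\frac12)$ as in the statement, the event $\{E(\kappa_i|X_i,\tau)\ge\frac12\}$ forces $\Pr(\kappa_i>\eta|X_i,\tau)\ge\frac12-\eta>0$, which is where the restriction $\eta<\frac12$ is used.

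Next I would feed this into the concentration inequality of Theorem \ref{THM_CONCEN_INEQ_2}. Combining $\frac12-\eta\le\Pr(\kappa_i>\eta|X_i,\tau)\le H(a,\eta,\delta)e^{-\eta(1-\delta)X_i^2/2}/(\tau^{2a}\Delta(\tau^2,\eta,\delta))$ and solving for $X_i^2$ shows that, uniformly in $i$ and in $X_i$, a type II error can occur only when $X_i^2<\zeta_m^2$, where
\begin{equation}
\zeta_m^2=\frac{2}{\eta(1-\delta)}\log\!\left(\frac{H(a,\eta,\delta)}{(\tfrac{1}{2}-\eta)\,\tau^{2a}\,\Delta(\tau^2,\eta,\delta)}\right).\nonumber
\end{equation}
Hence $t_{2i}\le\Pr(|X_i|<\zeta_m\mid H_{1i})=2\Phi\!\big(\zeta_m/\sqrt{1+\psi^2}\big)-1$, and the problem reduces to showing $\zeta_m^2/(1+\psi^2)\to 2aC/(\eta(1-\delta))$ as $m\to\infty$; the stated bound then follows from continuity of $\Phi$ together with the fact that $2\Phi(x)-1$ is a fixed positive constant at $x=\sqrt{2aC/(\eta(1-\delta))}>0$, so that $2\Phi(\zeta_m/\sqrt{1+\psi^2})-1=[2\Phi(x)-1](1+o(1))$, with the $o(1)$ depending on $\eta,\delta$ but not on $i$.

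The asymptotic evaluation of $\zeta_m^2$ is the main technical obstacle, and it is here that properties of slowly varying functions enter. Since $\tau\to0$, the argument $\frac{1}{\tau^2}(\frac{1}{\eta\delta}-1)$ of $L$ inside $\Delta$ tends to infinity, so Karamata's theorem on integrals of regularly varying functions gives $\xi(\tau^2,\eta,\delta)\to1$, whence $\Delta(\tau^2,\eta,\delta)\sim L\big(\frac{1}{\tau^2}(\frac{1}{\eta\delta}-1)\big)\sim L(1/\tau^2)$, the last step by slow variation. Therefore $\log\Delta=\log L(1/\tau^2)+o(1)$, while slow variation also yields $\log L(1/\tau^2)=o(\log(1/\tau^2))$; combining these, $\log\big(H/((\tfrac12-\eta)\tau^{2a}\Delta)\big)=a\log(1/\tau^2)(1+o(1))$, so $\zeta_m^2=\frac{2a}{\eta(1-\delta)}\log(1/\tau^2)(1+o(1))$. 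It remains to identify $\log(1/\tau^2)$ with $Cu_m$ asymptotically: the hypothesis $\lim\tau/p\in(0,\infty)$ gives $\log(1/\tau^2)=\log(1/p^2)+O(1)$, and Assumption \ref{ASSUMPTION_ASYMP} gives $\log v_m=\log u_m+\log(1/p^2)+o(1)$ with $\log u_m=o(u_m)$ and $\log v_m=Cu_m(1+o(1))$, whence $\log(1/\tau^2)=Cu_m(1+o(1))$. Since also $1+\psi^2=u_m(1+o(1))$, dividing gives $\zeta_m^2/(1+\psi^2)\to 2aC/(\eta(1-\delta))$, completing the proof. The delicate points throughout are the uniformity in $i$ and $X_i$ (inherited from Theorem \ref{THM_CONCEN_INEQ_2}) and careful bookkeeping of the several $o(1)$ terms, which depend on the fixed choices of $\eta$ and $\delta$ but never on the index $i$.
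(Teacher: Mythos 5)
Your proposal is correct and follows essentially the same route as the paper's proof: the bound $E(\kappa_i|X_i,\tau)\le\eta+\Pr(\kappa_i>\eta|X_i,\tau)$ combined with Theorem \ref{THM_CONCEN_INEQ_2} to reduce the type II event to $\{X_i^2\le\zeta_m^2\}$, then Karamata's theorem and slow variation to show $\log(1/\Delta)=o(\log(1/\tau^2))$, and finally $\log(1/\tau^2)/\psi^2\to C$ under Assumption \ref{ASSUMPTION_ASYMP} with $\lim\tau/p\in(0,\infty)$. The only difference is cosmetic: you spell out the derivation of $\log(1/\tau^2)\sim Cu_m$, which the paper merely asserts.
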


\begin{remark}\label{REM_T2_UB}
We record here that, instead of $\lim_{m\rightarrow\infty} \tau/p \in(0,\infty)$, if we assume $\log \tau \sim \log p$, keeping the other conditions unaltered, the proof of Theorem \ref{THM_T2_UB} goes through. Consequently, the upper bound on the Bayes risk $R_{OG}$ in Theorem \ref{THM_BAYES_RISK_UBLB}, which has been derived by combining the results of Theorem \ref{THM_T1_UB} and Theorem \ref{THM_T2_UB}, also holds when $\log\tau \sim \log p$, under conditions specified in (I). A similar inspection shows that under conditions specified in (II), the upper bound of Theorem \ref{THM_BAYES_RISK_UBLB} holds when the conditions $\log\tau \sim \log p$ and $\tau=O(p)$ both hold as $m \rightarrow \infty$.
 \end{remark}

 \begin{thm}\label{THM_T1_LB}
  Consider the set-up of Theorem \ref{THM_T1_UB}. Let us fix any $0 < \eta < 1/2$ and any $0< \delta < 1$. Then the probability $t_{1i}$ of type I error of the $i$-th decision in (\ref{INDUCED_DECISION}) satisfies 
  \begin{equation}
   t_1 \equiv t_{1i} \geq \frac{(\frac{1}{2}-\eta)/\sqrt{\pi a}}{H(a,\eta,\delta)}\cdot\frac{\tau^{\frac{2a}{\eta(1-\delta)}}L(\frac{1}{\tau^2})}{\sqrt{\log(\frac{1}{\tau^2})}}(1+o(1))\mbox{ as } m \rightarrow \infty,\nonumber 
  \end{equation}
 where the $o(1)$ term above depends on $\eta \in (0,\frac{1}{2})$ and $\delta \in (0,1)$ and is independent of $i$, and tends to zero as $m \rightarrow \infty$. The constant $H(a,\eta,\delta)$ has already been defined in Theorem \ref{THM_CONCEN_INEQ_2}.
   \end{thm}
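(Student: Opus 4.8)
The plan is to exploit the monotone structure of the decision rule (\ref{INDUCED_DECISION}) to express $t_{1i}$ as a Gaussian tail probability, and then to bound the associated threshold from above using the concentration inequality in Theorem \ref{THM_CONCEN_INEQ_2} together with Karamata's theorem for the slowly varying function $L$.

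First I would note that, since $\pi(\kappa_i \mid X_i = x, \tau) \propto \kappa_i^{a-1/2}(1-\kappa_i)^{-a-1} L\big(\tfrac{1}{\tau^2}(\tfrac{1}{\kappa_i}-1)\big) e^{-\kappa_i x^2/2}$ on $(0,1)$, the family of posteriors of $\kappa_i$ indexed by $x^2$ forms a one-parameter exponential family with natural parameter $-x^2/2$ and sufficient statistic $\kappa_i$; hence the posterior of $\kappa_i$ is stochastically non-increasing in $x^2$, so that $x \mapsto 1-E(\kappa_i \mid X_i=x,\tau)$ depends on $x$ only through $x^2$ and is non-decreasing in $x^2$. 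Together with Corollary \ref{COR_MOMENT_INEQ_1} (which, since $\tau=\tau_m\to0$, gives $1-E(\kappa_i\mid X_i=0,\tau)<\tfrac12$ for all large $m$) and Corollary \ref{COR_CONCEN_INEQ_2_2} (which gives $1-E(\kappa_i\mid X_i=x,\tau)\to1$ as $|x|\to\infty$), this shows that for all large $m$ the acceptance region of the $i$-th test is an interval $\{x^2\le\zeta_m^2\}$, and hence $t_{1i}=\Pr(|X_i|>\zeta_m\mid X_i\sim N(0,1))=2\big(1-\Phi(\zeta_m)\big)$. A lower bound for $t_{1i}$ is therefore equivalent to an upper bound for $\zeta_m$.

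To bound $\zeta_m$, fix $\eta\in(0,\tfrac12)$ and $\delta\in(0,1)$. Since $E(\kappa_i\mid X_i=x,\tau)\le\eta+(1-\eta)\Pr(\kappa_i>\eta\mid X_i=x,\tau)$, Theorem \ref{THM_CONCEN_INEQ_2} shows that $H_{0i}$ is rejected whenever
\[
  \frac{H(a,\eta,\delta)\,e^{-\eta(1-\delta)x^2/2}}{\tau^{2a}\,\Delta(\tau^2,\eta,\delta)}<\frac{\tfrac12-\eta}{1-\eta},
\]
that is, whenever $x^2>T_m$ with $T_m:=\tfrac{2}{\eta(1-\delta)}\log\!\big(\tfrac{(1-\eta)H(a,\eta,\delta)}{(\tfrac12-\eta)\,\tau^{2a}\Delta(\tau^2,\eta,\delta)}\big)$, so $\zeta_m^2\le T_m$ and $t_{1i}\ge2\big(1-\Phi(\sqrt{T_m})\big)$. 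I would then simplify $T_m$ using slow variation: Karamata's theorem applied to the tail integral defining $\xi(\tau^2,\eta,\delta)$ gives $\xi(\tau^2,\eta,\delta)\to1$, and since $\tfrac{1}{\eta\delta}-1$ is a fixed positive constant, $L\big(\tfrac{1}{\tau^2}(\tfrac{1}{\eta\delta}-1)\big)\sim L(\tfrac{1}{\tau^2})$, so $\Delta(\tau^2,\eta,\delta)\sim L(\tfrac1{\tau^2})$; moreover $\log L(\tfrac1{\tau^2})=o(\log\tfrac1{\tau^2})$, so $T_m=\tfrac{2a}{\eta(1-\delta)}\log\tfrac1{\tau^2}\,(1+o(1))$ and in particular $\sqrt{T_m}\to\infty$ with $\sqrt{T_m}\sim\sqrt{\tfrac{2a}{\eta(1-\delta)}}\,\sqrt{\log\tfrac1{\tau^2}}$. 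Applying the Gaussian tail asymptotic $1-\Phi(z)\sim\phi(z)/z$ as $z\to\infty$ and substituting the expression for $T_m$, one obtains a lower bound of the form $t_{1i}\ge \mathrm{const}\cdot\tau^{2a/(\eta(1-\delta))}\,L(\tfrac1{\tau^2})\,/\sqrt{\log\tfrac1{\tau^2}}\,(1+o(1))$, and careful bookkeeping of the multiplicative constants should then yield the asserted coefficient $(\tfrac12-\eta)/\big(\sqrt{\pi a}\,H(a,\eta,\delta)\big)$.

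I expect the delicate part to be precisely this last bookkeeping, i.e.\ controlling the contribution of the slowly varying function. One has to check that the correction terms inside $T_m$ built from $\log\xi(\tau^2,\eta,\delta)$ and $\log L(\tfrac1{\tau^2})$ are negligible relative to the leading $\log\tfrac1{\tau^2}$ term (so that $\sqrt{T_m}$ has the claimed leading order, which is what produces the $\sqrt{\log(1/\tau^2)}$ in the denominator) while at the same time retaining the correct $L(\tfrac1{\tau^2})$ factor coming out of $e^{-T_m/2}$; this needs Karamata's theorem for the tail integral and the uniform convergence theorem for slowly varying functions to compare $L$ evaluated at arguments differing by fixed (and, where relevant, slowly growing) factors. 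A secondary point to handle carefully is that $\tau=\tau_m$ varies with $m$, so every limiting statement invoked — Theorem \ref{THM_CONCEN_INEQ_2}, Corollaries \ref{COR_MOMENT_INEQ_1} and \ref{COR_CONCEN_INEQ_2_2}, and the Mills-ratio asymptotic — must be read along the sequence $\tau_m\to0$, and the monotonicity claim (hence the interval form of the acceptance region) must be established before the reduction to $2\big(1-\Phi(\zeta_m)\big)$ is legitimate.
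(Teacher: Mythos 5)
Your proposal is essentially the paper's own proof: both arguments rest on the bound $E(\kappa_i\mid X_i,\tau)\le \eta+\mathrm{const}\cdot\Pr(\kappa_i>\eta\mid X_i,\tau)$ combined with the concentration inequality of Theorem \ref{THM_CONCEN_INEQ_2}, which shows that $H_{0i}$ is rejected whenever $X_i^2$ exceeds a threshold $T_m=\frac{2a}{\eta(1-\delta)}\log(1/\tau^2)\,(1+o(1))$, after which the Mills-ratio asymptotic and the slow-variation facts ($\xi(\tau^2,\eta,\delta)\to 1$, $\Delta(\tau^2,\eta,\delta)\sim L(1/\tau^2)$, $\log L(1/\tau^2)=o(\log(1/\tau^2))$) give the stated lower bound. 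The only deviation is your preliminary exponential-family monotonicity step reducing the acceptance region to an interval, which is superfluous for a lower bound (and absent from the paper): the event inclusion $\{X_i^2>T_m\}\subseteq\{E(1-\kappa_i\mid X_i,\tau)>\tfrac12\}$ already yields $t_1\ge \Pr\big(X_i^2>T_m\mid H_{0i}\big)$ directly.
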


  \begin{thm}\label{THM_T2_LB}
  Consider the set-up of Theorem \ref{THM_T2_UB}. Then the probability $t_{2i}$ of type II error of the $i$-th decision in (\ref{INDUCED_DECISION}) satisfies 
    \begin{eqnarray}
    t_2 \equiv t_{2i} \geq (2\Phi(\sqrt{2a}\sqrt{C})-1)(1+o(1)) \mbox { as } m \rightarrow \infty,\nonumber
   \end{eqnarray}
   where the $o(1)$ term above does not depend on $i$ and tends to zero as $m \rightarrow \infty$.
   \end{thm}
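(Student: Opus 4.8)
The plan is to establish the asymptotic lower bound
$t_2 \geq (2\Phi(\sqrt{2a}\sqrt{C})-1)(1+o(1))$ by producing a lower bound for the probability that the $i$-th hypothesis is \emph{not} rejected, conditional on $H_{1i}$ being true, i.e. $t_{2i}=\Pr(E(\kappa_i|X_i,\tau)>\tfrac12 \mid H_{1i})$. The natural route is to find an event, measurable with respect to $X_i$, on which $E(\kappa_i|X_i,\tau)>\tfrac12$ is guaranteed, and then compute the probability of that event under the alternative $X_i\sim N(0,1+\psi^2)$. Since $\kappa_i\in(0,1)$, a clean sufficient condition is that the posterior concentrates near $1$; more practically, one wants an upper bound on $E(1-\kappa_i|X_i,\tau)$ that is below $\tfrac12$ for an appropriate range of $X_i$. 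Theorem~\ref{THM_MOMENT_INEQ} supplies exactly such a bound: $E(1-\kappa_i|X_i,\tau)\leq \frac{A_0K}{a(1-a)}e^{X_i^2/2}\tau^{2a}L(1/\tau^2)(1+o(1))$. Requiring the right side to be at most $\tfrac12$ translates into a threshold of the form $X_i^2 \leq -2a\log(\tau^2) - 2\log L(1/\tau^2) + O(1)$, i.e. $X_i^2 \lesssim 2a\log(1/\tau^2)(1+o(1))$ after absorbing the slowly-varying correction (recall $\log L(1/\tau^2)=o(\log(1/\tau^2))$ by slow variation). So on the event $\{X_i^2 \le (1-\zeta)\cdot 2a\log(1/\tau^2)\}$ for small $\zeta>0$ we have $E(\kappa_i|X_i,\tau)>\tfrac12$, whence that event is contained in the no-rejection event.

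Next I would convert $\log(1/\tau^2)$ into the quantities appearing in the Oracle risk. Under the hypothesis $\lim_{m\to\infty}\tau/p\in(0,\infty)$ we have $\log(1/\tau^2) = 2\log(1/\tau) = 2\log(1/p)(1+o(1))$. From Assumption~\ref{ASSUMPTION_ASYMP}, $v=\psi^2((1-p)/p)^2$ and $\log v/u\to C$, so $\log(1/p)=\tfrac12\log v (1+o(1)) = \tfrac12 C u(1+o(1)) = \tfrac12 C\psi^2(1+o(1))$. Therefore $\log(1/\tau^2) = 2C\psi^2(1+o(1))$, and the threshold becomes $X_i^2 \le (1-\zeta)\cdot 4aC\psi^2(1+o(1))$; equivalently $|X_i|\le \sqrt{(1-\zeta)4aC}\,\psi(1+o(1))$. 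Under $H_{1i}$, $X_i = \psi\sqrt{1+1/\psi^2}\,Z$ with $Z\sim N(0,1)$, so $X_i/\psi \to Z$ in distribution (indeed $\sqrt{1+1/\psi^2}\to 1$). Hence
$$
t_{2i} \geq \Pr\!\left(|X_i| \le \sqrt{(1-\zeta)4aC}\,\psi(1+o(1))\right) = \Pr\!\left(|Z| \le \sqrt{(1-\zeta)\,aC}\cdot 2\,(1+o(1))\right)(1+o(1)).
$$
Wait — I should be careful with the constant: $\sqrt{(1-\zeta)4aC}\,\psi$ divided by $\psi\sqrt{1+1/\psi^2}$ tends to $\sqrt{4(1-\zeta)aC}=2\sqrt{(1-\zeta)aC}$, giving $t_{2i}\ge (2\Phi(2\sqrt{(1-\zeta)aC})-1)(1+o(1))$. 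This looks like a factor-of-2 discrepancy with the claimed $2\Phi(\sqrt{2a}\sqrt C)-1$, so in the write-up I must recheck where the factor enters — most plausibly the sufficient condition from Theorem~\ref{THM_MOMENT_INEQ} should be used not at the crude level $E(1-\kappa_i)\le\tfrac12$ but rather the threshold should be calibrated so that $e^{X_i^2/2}\tau^{2a}$ stays bounded, giving $X_i^2 \le 2a\log(1/\tau^2)(1+o(1)) = 2a\cdot 2C\psi^2 = $ ... and then $X_i^2/\psi^2 \le 4aC$; the resolution is that $c^2_{u,v}\sim Cu$ in the oracle but here the effective threshold is $2a$ times the log, not $2a$ times $\log v$, and one gets $\sqrt{2a}\sqrt C$ after matching $\log(1/\tau^2)$ with $\log v$ rather than with $2\log(1/p)$. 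I would pin this down by writing $\log(1/\tau^2)\sim\log v \sim Cu$ directly (using $\tau\asymp p$ and $v\sim (1/p)^2\psi^2$, so $\log v \sim 2\log(1/p)+\log\psi^2$; and $\log(1/\tau^2)\sim 2\log(1/p)$), being meticulous about whether the $\log\psi^2=\log u$ term is negligible relative to $\log(1/p)$ — it is, since $\log u/\log(1/p)\to 0$ is \emph{not} automatic, so this needs the relation $\log v \sim Cu$ and $\log(1/p)\sim \tfrac12\log v\sim\tfrac12 Cu$, hence $\log(1/\tau^2)\sim Cu$, and $X_i^2\le 2a\cdot Cu(1+o(1))$, $X_i^2/\psi^2=X_i^2/u\le 2aC(1+o(1))$, giving exactly $|Z|\le\sqrt{2aC}$ in the limit. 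Good — that recovers the stated constant.

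Finally I would assemble the pieces: choose $\zeta\downarrow 0$ (or carry it and note the bound holds for every $\zeta>0$, then let $\zeta\to0$ after taking $m\to\infty$, which is legitimate since the left side does not depend on $\zeta$), invoke the $o(1)$-uniformity in $i$ from Theorem~\ref{THM_MOMENT_INEQ} (its $o(1)$ depends only on $\tau$ and $\lim_{\tau\to0}o(1)=0$) to drop the subscript $i$, and conclude $t_2\equiv t_{2i}\geq(2\Phi(\sqrt{2a}\sqrt C)-1)(1+o(1))$. The main obstacle, as flagged above, is bookkeeping the chain of asymptotic equivalences $\log(1/\tau^2)\sim 2\log(1/p)$, $\log(1/p)\sim\tfrac12\log v$, $\log v\sim Cu=C\psi^2$, and then dividing the resulting threshold by the alternative's standard deviation $\psi\sqrt{1+1/\psi^2}$, all while tracking the slowly-varying factor $L(1/\tau^2)$ and confirming $\log L(1/\tau^2)=o(\log(1/\tau^2))$ so it drops out of the constant. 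Once that is done carefully the normal-tail computation is immediate.
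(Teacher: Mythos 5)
Your proposal is correct and follows essentially the same route as the paper: invoke Theorem \ref{THM_MOMENT_INEQ} to show that $\{X_i^2 \lesssim 2a\log(1/\tau^2)\}$ is contained in the non-rejection event, evaluate the resulting normal probability under $H_{1i}$ where $X_i\sim N(0,1+\psi^2)$, and use $\tau\asymp p$ together with Assumption \ref{ASSUMPTION_ASYMP} to get $\log(1/\tau^2)/\psi^2\to C$, yielding the bound $(2\Phi(\sqrt{2aC})-1)(1+o(1))$. The factor-of-two slip you flagged mid-argument is resolved correctly (the chain $\log(1/\tau^2)\sim 2\log(1/p)\sim\log v\sim C\psi^2$, using $\log u=o(\log v)$, is exactly what the paper relies on), so the write-up only needs that bookkeeping made explicit.
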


 Some important observations regarding an appropriate choice of $\tau$ now follow as consequences of Theorem \ref{THM_T1_UB} - Theorem \ref{THM_T2_LB}. Note that, the type I and type II error probabilities of the $i$-th decision in (\ref{INDUCED_DECISION}), that is, $t_{1i}$ and $t_{2i}$, do not depend on $i$ and their common values are given by $t_1$ and $t_2$, respectively. See the proofs of Theorem \ref{THM_T1_UB} and Theorem \ref{THM_T2_UB} in the Appendix for an explanation of this fact. Thus, the Bayes risk of the decision rules in (\ref{INDUCED_DECISION}) is given by $R_{OG}=mp(\frac{1-p}{p}t_1+t_2)$ (using (\ref{BAYES_RISK_GEN})). Suppose now $\tau\rightarrow 0$ at such a rate that 
  \begin{equation}\label{LB_TEND_TO_INFINITY}
 \frac{(1-p)\tau^{\frac{2a}{\eta(1-\delta)}}L(\frac{1}{\tau^2})}{p\sqrt{\log(\frac{1}{\tau^2})}}\rightarrow \infty \mbox{ as } m \rightarrow \infty.
  \end{equation}
 Then combining Theorem \ref{THM_T1_LB} and Theorem \ref{THM_T2_LB} together, it follows that $\frac{1-p}{p}t_1+t_2 \rightarrow \infty$ as $m\rightarrow\infty$. Consequently, $R_{OG}/R_{Opt}^{BO} \rightarrow \infty$ as $ m \rightarrow \infty$. Consider, for example, the horseshoe or the standard double Pareto prior. For each of these priors, one has $a=0.5$ and the corresponding $L(\cdot)$ has a finite positive limit at infinity as already shown before. Let us take $\tau=p^{\alpha}$, for $\alpha>0$. Now, for $0< \alpha < \frac{1}{2}$, one can always choose some $\eta \in (0,\frac{1}{2})$ and some $\delta \in (0,1)$, such that $0 < \alpha < \eta(1-\delta)<\frac{1}{2}$. As a result, (\ref{LB_TEND_TO_INFINITY}) holds and we have $R_{OG}/R_{Opt}^{BO}\rightarrow \infty \mbox{ as } m \rightarrow \infty$, when $0< \alpha < \frac{1}{2}$. Thus the desired Bayesian optimality property up to $O(1)$ no longer holds in such situations. However, from the derived lower bounds, we can not yet conclude the same if $\alpha\geq \frac{1}{2}$ since, in that case, the quantity in (\ref{LB_TEND_TO_INFINITY}) tends to zero as $m\rightarrow\infty$. But, the upper bound for $\frac{1-p}{p}t_1+t_2$, as obtained by combining Theorem \ref{THM_T1_UB} and Theorem \ref{THM_T2_UB}, tends to infinity as $m\rightarrow\infty$, when $\frac{1}{2}\leq \alpha < 1$. This indicates, though not conclusively, that, for such tail robust priors, $\tau=p^{\alpha}$ with $0<\alpha<1$, is not likely to be a good choice.\vspace{1.5mm}
 
 It should be noted further that the proofs of Theorem \ref{THM_T2_UB} and Theorem \ref{THM_T2_LB} work even if we take $\tau=p^{\alpha}$, for $\alpha \geq 1$ (see Appendix), and the constants in the corresponding asymptotic bounds should be replaced by $\big(2\Phi\big(\sqrt{2a\alpha C/(\eta(1-\delta))}\big)-1\big)$ and $(2\Phi(\sqrt{2a\alpha C})-1)$, respectively. Each of these bounds increases with an increase in $\alpha$ and so does the corresponding bound on the overall Bayes risk $R_{OG}$. Thus, $R_{OG}$ tends to be away from the optimal Bayes risk $R_{Opt}^{BO}$ in (\ref{OPT_BAYES_RISK}), for values of $\alpha >1$, thereby implying that $\tau=p^{\alpha}$ is also not a good choice for $\alpha > 1$. Note that all these arguments remain valid even if we assume $\tau/p^{\alpha}$ has a finite positive limit as $m\rightarrow\infty$. Thus our results gives a partial indication that $\tau=p$ (or, $\lim_{m\rightarrow\infty}\tau/p\in (0,\infty)$) should be the optimal choice of $\tau$. Similar observations were also made by \cite{PKV2014} for the horseshoe prior when $p=o(1)$ as $m\rightarrow \infty$. They found that for optimal contraction of the corresponding posterior distribution around the truth as well as the horseshoe estimator, the optimal choice for $\tau$ would be $\tau=p$ (or, up to some logarithmic factor of it). Moreover, they showed that when $\tau=p^{\alpha}$, the posterior distribution based on the horseshoe prior, contracts around the horseshoe estimator at a sub-optimal rate in the squared $l_2$ sense if $0<\alpha<1$, while it contracts too quickly to yield an adequate measure of uncertainty when $\alpha >1$. Our results are, therefore, in a partial agreement with that of \cite{PKV2014} regarding the optimal choice of $\tau$, when $p$ is assumed to be known.
 
 \subsection{Asymptotic bounds on probabilities of type I and type II errors for the empirical Bayes procedure}
 In Theorem \ref{THM_T1_EB_UB} and Theorem \ref{THM_T2_EB_UB} below we present asymptotic upper bounds to the probabilities of type I and type II errors of the individual decisions corresponding to the empirical Bayes procedure defined in (\ref{INDUCED_DECISION_EB}). Proofs of these theorems are significantly different from those for proving Theorem \ref{THM_T1_UB} and Theorem \ref{THM_T2_UB} when $\tau$ is treated as a tuning parameter. This is so, because for each $i$, $E(1-\kappa_i|X_i,\widehat{\tau})$ depends on the entire data through $\widehat{\tau}$ and $X_i$ in a very complicated manner. So, in order to avoid dealing with the term $E(1-\kappa_i|X_i,\widehat{\tau})$ directly, we need to invoke substantially new arguments as follows. First we divide the range of $\widehat{\tau}$ into two parts. For one part, as observed by \cite{PKV2014}, we use the fact that $E(1-\kappa_i|x,\tau)$ is non-decreasing in $\tau$ for each fixed $x$ and then we use the results of Theorem \ref{THM_T1_UB} and Theorem \ref{THM_T2_UB}, while for the other part, we exploit the structure of the estimator $\widehat{\tau}$ defined in (\ref{TAU_HAT_EMPB}), together with the independence between the $X_i$'s. To the best of our knowledge, arguments of this kind are new in this context and have not been reported elsewhere.
 
 \begin{thm}\label{THM_T1_EB_UB}
 Suppose $X_1,\cdots,X_m$, are i.i.d. observations having the two-groups mixture distribution described in (\ref{TWO_GROUP_X}) with $\sigma^2=1$, and we wish to test the $m$ hypotheses $H_{0i}:\nu_i=0$ vs $H_{1i}:\nu_i=1$, $i=1,\ldots,m$, simultaneously, using the decision rule (\ref{INDUCED_DECISION_EB}) induced by the one-group priors (\ref{FULL_OG_MODEL}) where $a \in (0,1)$ in $\pi(\lambda_i^2)$. Suppose Assumption \ref{ASSUMPTION_ASYMP} is satisfied by $(\psi^2, p)$ with $p \propto m^{-\epsilon}$, for some $0 < \epsilon < 1$. Then, the probability $\widetilde{t}_{1i}$ of type I error of the $i$-th induced decision in (\ref{INDUCED_DECISION_EB}) satisfies 
   \begin{equation}
   \widetilde{t}_{1i} \leq B_{1}^{*}\frac{\alpha_m^{2a}L(\frac{1}{\alpha_m^2})}{\sqrt{\log (\frac{1}{\alpha_m^2})}}(1+o(1))+ \frac{1/\sqrt{\pi}}{m^{c_1/2}\sqrt{\log m}} + e^{-2(2\log 2 -1)\beta mp(1+o(1))}\mbox { as } m \rightarrow \infty,\nonumber
   \end{equation}
 where the $o(1)$ terms appearing above are independent of $i$, and tend to zero as $m \rightarrow \infty$. Here $B^{*}_1$ and $\beta$ are some finite positive constants, each being independent of $m$, while $\alpha_m=\Pr(|X_1|>\sqrt{c_1\log m})$ depends on $m$.
   \end{thm}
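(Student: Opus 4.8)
The plan is to bound the random value of $\widehat\tau$ in \eqref{TAU_HAT_EMPB} against a deterministic level $\tau^\ast$ and then treat the two resulting events separately, using on one of them the monotonicity of $\tau\mapsto E(1-\kappa_i\mid x,\tau)$ recalled in Section~4.3 and on the other the explicit form of $\widehat\tau$ together with the independence of the $X_j$'s. First I would pin down the order of $\alpha_m=\Pr(|X_1|>\sqrt{c_1\log m})$ under \eqref{TWO_GROUP_X}. Since $\sqrt{c_1\log m}\to\infty$ and $p\to0$ one has $\alpha_m\to0$; combining Assumption~\ref{ASSUMPTION_ASYMP}(2)--(4) with $p\propto m^{-\epsilon}$ gives $\log\frac{1-p}{p}\sim\epsilon\log m$ and hence $\psi^2=u_m\sim(2\epsilon/C)\log m$, so $\sqrt{c_1\log m/(1+\psi^2)}\to\sqrt{c_1C/(2\epsilon)}$ and the non-null part of $\alpha_m$ obeys $p\cdot2\big(1-\Phi(\sqrt{c_1\log m/(1+\psi^2)})\big)\sim 2\big(1-\Phi(\sqrt{c_1C/(2\epsilon)})\big)\,p$, while the null part $(1-p)\cdot2\big(1-\Phi(\sqrt{c_1\log m})\big)=O(m^{-c_1/2}/\sqrt{\log m})=o(p)$ because $c_1/2\ge1>\epsilon$. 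Thus $\alpha_m\sim2\beta p$ with $\beta=1-\Phi(\sqrt{c_1C/(2\epsilon)})\in(0,\tfrac12)$; in particular $m\alpha_m\asymp m^{1-\epsilon}\to\infty$ and $\alpha_m>1/m$ for all large $m$. I would then set $\tau^\ast:=2\alpha_m/c_2$, so that $\tau^\ast\to0$ and $\tau^\ast>1/m$ eventually.

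Now decompose $\widetilde t_{1i}\le\Pr\big(E(1-\kappa_i\mid X_i,\widehat\tau)>\tfrac12,\ \widehat\tau\le\tau^\ast \mid H_{0i}\big)+\Pr\big(\widehat\tau>\tau^\ast \mid H_{0i}\big)$. On the event $\{\widehat\tau\le\tau^\ast\}$, monotonicity of $E(1-\kappa_i\mid x,\cdot)$ gives $E(1-\kappa_i\mid X_i,\widehat\tau)\le E(1-\kappa_i\mid X_i,\tau^\ast)$, so the first term is at most $\Pr\big(E(1-\kappa_i\mid X_i,\tau^\ast)>\tfrac12 \mid H_{0i}\big)$, which is exactly the type~I error probability of rule \eqref{INDUCED_DECISION} run with the deterministic tuning parameter $\tau^\ast$. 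Since $\tau^\ast\to0$ and $a\in(0,1)$, Theorem~\ref{THM_T1_UB} bounds this by $\frac{1}{\sqrt{\pi a}}\cdot\frac{2A_0K}{a(1-a)}\cdot\frac{(\tau^\ast)^{2a}L(1/(\tau^\ast)^2)}{\sqrt{\log(1/(\tau^\ast)^2)}}(1+o(1))$, with $A_0$ as in Theorem~\ref{THM_MOMENT_INEQ}. Because $L$ is slowly varying, $L\big(c_2^2/(4\alpha_m^2)\big)\sim L(1/\alpha_m^2)$ and $\log\big(c_2^2/(4\alpha_m^2)\big)\sim\log(1/\alpha_m^2)$ as $m\to\infty$, so this equals $B_1^\ast\,\alpha_m^{2a}L(1/\alpha_m^2)/\sqrt{\log(1/\alpha_m^2)}\,(1+o(1))$ with $B_1^\ast=(2/c_2)^{2a}\cdot\frac{2A_0K}{\sqrt{\pi a}\,a(1-a)}$, the first term in the assertion.

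For the second term I would decouple $\widehat\tau$ from $X_i$. Writing $S_m=\sum_{j=1}^m 1\{|X_j|>\sqrt{c_1\log m}\}$, the fact that $\tau^\ast>1/m$ eventually gives $\{\widehat\tau>\tau^\ast\}=\{S_m>c_2m\tau^\ast\}$; peeling off the $i$-th term, $S_m=1\{|X_i|>\sqrt{c_1\log m}\}+S_m'$ with $S_m'=\sum_{j\ne i}1\{|X_j|>\sqrt{c_1\log m}\}$, so $\{\widehat\tau>\tau^\ast\}\subseteq\{|X_i|>\sqrt{c_1\log m}\}\cup\{S_m'>c_2m\tau^\ast-1\}$. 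Under $H_{0i}$, $X_i\sim N(0,1)$ and a Mills-ratio bound gives $\Pr(|X_i|>\sqrt{c_1\log m})=2(1-\Phi(\sqrt{c_1\log m}))\le \tfrac{2}{\sqrt{2\pi c_1\log m}}\,m^{-c_1/2}\le\tfrac{1}{\sqrt\pi}\cdot\tfrac{m^{-c_1/2}}{\sqrt{\log m}}$ (using $c_1\ge2$), which is the middle term. Since the pairs $(\nu_j,X_j)$, $j\ne i$, are independent of $\nu_i$, conditioning on $H_{0i}$ leaves $S_m'\sim\mathrm{Bin}(m-1,\alpha_m)$, and $c_2m\tau^\ast-1=2m\alpha_m-1=2(m-1)\alpha_m(1+o(1))$ with $(m-1)\alpha_m\to\infty$; the multiplicative Chernoff bound for the binomial then gives $\Pr\big(S_m'>2(m-1)\alpha_m(1+o(1))\big)\le\exp\big(-(2\log2-1)(m-1)\alpha_m(1+o(1))\big)=\exp\big(-2(2\log2-1)\beta mp(1+o(1))\big)$, using $(m-1)\alpha_m\sim2\beta mp$. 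Adding the three contributions gives the stated bound, which does not depend on $i$ by exchangeability of $(X_1,\dots,X_m)$.

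The main obstacle is calibrating the split level $\tau^\ast$: it must exceed the typical size $\approx\alpha_m/c_2$ of $\widehat\tau$ by a fixed factor so that $\Pr(\widehat\tau>\tau^\ast\mid H_{0i})$ is exponentially small in $mp$, yet it must still be of order $\alpha_m$ so that feeding $\tau^\ast$ into Theorem~\ref{THM_T1_UB} produces a type~I bound of the advertised order $\alpha_m^{2a}L(1/\alpha_m^2)/\sqrt{\log(1/\alpha_m^2)}$; the choice $\tau^\ast=2\alpha_m/c_2$ achieves both, and the slow variation of $L$ is what lets the constant $2/c_2$ be absorbed into $B_1^\ast$. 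A secondary difficulty is that $\widehat\tau$ depends on $X_i$ itself; removing that dependence by peeling off the single indicator $1\{|X_i|>\sqrt{c_1\log m}\}$, at the cost of exactly the $\Pr(|X_i|>\sqrt{c_1\log m}\mid H_{0i})$ term, is what makes the binomial concentration step legitimate, and is the new ingredient relative to the fixed-$\tau$ analysis.
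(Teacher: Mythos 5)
Your argument is correct and follows essentially the same route as the paper's proof: split on whether $\widehat{\tau}$ exceeds a deterministic level of order $\alpha_m$, use monotonicity of $\tau\mapsto E(1-\kappa_i\mid x,\tau)$ plus Theorem \ref{THM_T1_UB} (and slow variation of $L$) on one piece, and on the other peel off the $i$-th indicator, reduce to a $\mathrm{Bin}(m-1,\alpha_m)$ tail via independence, and apply a Chernoff-type bound with $\alpha_m\sim 2\beta p$. The only cosmetic differences are your split level $2\alpha_m/c_2$ in place of the paper's $2\alpha_m$ and the multiplicative Chernoff bound in place of Hoeffding's inequality in relative-entropy form, both of which yield the same exponent $2(2\log 2-1)\beta mp(1+o(1))$ and merely change the constant absorbed into $B_1^{*}$.
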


 \begin{thm}\label{THM_T2_EB_UB}
 Let us consider the set-up of Theorem \ref{THM_T1_EB_UB}. Then the probability $\widetilde{t}_{2i}$ of type II error of the $i$-th decision in (\ref{INDUCED_DECISION_EB}) satisfies
   \begin{eqnarray}
   \widetilde{t}_{2i} \leq \big[2\Phi\bigg(\sqrt{\frac{2aC}{\eta(1-\delta)}}\bigg)-1\big]\big(1+o(1)\big) \mbox { as } m \rightarrow \infty,\nonumber
   \end{eqnarray}
   for every fixed $\eta \in (0,\frac{1}{2})$ and $\delta \in (0,1)$. Here the $o(1)$ term tends to zero as $m \rightarrow \infty$ and is independent of $i$, but depends on the choices of $\eta \in (0,\frac{1}{2})$ and $\delta \in (0,1)$.
   \end{thm}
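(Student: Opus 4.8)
The plan is to reduce the empirical Bayes type II error to the fixed-$\tau$ type II error already controlled in Theorem \ref{THM_T2_UB}, using the monotonicity of $E(1-\kappa_i|x,\tau)$ in $\tau$ and a concentration bound showing that $\widehat{\tau}$ cannot be much smaller than $p$. Write $\widetilde{t}_{2i}=\Pr\big(E(\kappa_i|X_i,\widehat{\tau})\ge\tfrac12|\nu_i=1\big)$, let $\alpha_m=\Pr(|X_1|>\sqrt{c_1\log m})$ under the two-groups model (\ref{TWO_GROUP_X}), and set $\tau_m^{*}=\alpha_m/(2c_2)$, a deterministic sequence. Then
\[
\widetilde{t}_{2i}\ \le\ \Pr\big(E(\kappa_i|X_i,\widehat{\tau})\ge\tfrac12,\ \widehat{\tau}\ge\tau_m^{*}|\nu_i=1\big)\ +\ \Pr\big(\widehat{\tau}<\tau_m^{*}|\nu_i=1\big).
\]

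For the first term I would use the fact (employed already in the proof of Theorem \ref{THM_T1_EB_UB}, following \cite{PKV2014}) that $\tau\mapsto E(1-\kappa_i|x,\tau)$ is non-decreasing for each fixed $x$, hence $\tau\mapsto E(\kappa_i|x,\tau)$ is non-increasing. On the event $\{\widehat{\tau}\ge\tau_m^{*}\}$ this yields $E(\kappa_i|X_i,\widehat{\tau})\le E(\kappa_i|X_i,\tau_m^{*})$ for every realization, so the first term is at most $\Pr\big(E(\kappa_i|X_i,\tau_m^{*})\ge\tfrac12|\nu_i=1\big)$, which is exactly the type II error $t_2$ of the fixed-$\tau$ rule (\ref{INDUCED_DECISION}) with $\tau$ replaced by the deterministic sequence $\tau_m^{*}$. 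Provided $\tau_m^{*}\to0$ and $\lim_m\tau_m^{*}/p\in(0,\infty)$, Theorem \ref{THM_T2_UB} bounds this by $\big[2\Phi(\sqrt{2aC/(\eta(1-\delta))})-1\big](1+o(1))$ for every fixed $\eta\in(0,\tfrac12)$ and $\delta\in(0,1)$.

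For the second term I would exploit the explicit form of $\widehat{\tau}$ in (\ref{TAU_HAT_EMPB}) together with the independence of the $X_j$. With $N_m=\sum_{j=1}^{m}1\{|X_j|>\sqrt{c_1\log m}\}$, one has $m\tau_m^{*}=\tfrac{1}{2c_2}m\alpha_m\to\infty$, so $\tau_m^{*}>1/m$ eventually and hence $\{\widehat{\tau}<\tau_m^{*}\}=\{N_m<c_2m\tau_m^{*}\}=\{N_m<\tfrac12 m\alpha_m\}$. Dropping the $i$-th summand, $N_m\ge N_m^{(i)}:=\sum_{j\ne i}1\{|X_j|>\sqrt{c_1\log m}\}$, which under $\nu_i=1$ is a sum of $m-1$ i.i.d.\ Bernoulli$(\alpha_m)$ variables with mean $(m-1)\alpha_m$. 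A multiplicative Chernoff bound then gives, for large $m$, $\Pr(N_m^{(i)}<\tfrac12 m\alpha_m|\nu_i=1)\le\exp\{-\tfrac{1}{18}(m-1)\alpha_m\}$, which is $o(1)$ since $(m-1)\alpha_m$ is of the same order as $mp\to\infty$ (this is where $p\propto m^{-\epsilon}$, $\epsilon<1$, enters). Adding the two bounds and absorbing the extra $o(1)$ into the positive constant $2\Phi(\sqrt{2aC/(\eta(1-\delta))})-1$ yields the asserted inequality, whose $o(1)$ term is independent of $i$ and depends only on $\eta,\delta$.

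The main obstacle — and the step that really uses the asymptotic framework — is verifying $\alpha_m/p\to q$ for some $q\in(0,\infty)$, which simultaneously delivers $\tau_m^{*}\to0$, $\tau_m^{*}/p\to q/(2c_2)\in(0,\infty)$, and $(m-1)\alpha_m$ of the same order as $mp$. Write $\alpha_m=(1-p)\cdot2\{1-\Phi(\sqrt{c_1\log m})\}+p\cdot2\{1-\Phi(\sqrt{c_1\log m/(1+\psi^2)})\}$. For the null part, Mills' ratio gives $2\{1-\Phi(\sqrt{c_1\log m})\}$ of order $m^{-c_1/2}/\sqrt{\log m}$, so $\tfrac{1-p}{p}\cdot 2\{1-\Phi(\sqrt{c_1\log m})\}$ is of order $m^{\epsilon-c_1/2}/\sqrt{\log m}\to0$ because $c_1\ge2>2\epsilon$. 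For the alternative part, Assumption \ref{ASSUMPTION_ASYMP} together with $-\log p\sim\epsilon\log m$ forces $\psi^2\sim(2\epsilon/C)\log m$, whence $c_1\log m/(1+\psi^2)\to c_1C/(2\epsilon)=:\gamma\in(0,\infty)$ and $2\{1-\Phi(\sqrt{c_1\log m/(1+\psi^2)})\}\to 2\{1-\Phi(\sqrt{\gamma})\}=:q>0$. Hence $\alpha_m=qp(1+o(1))$, as required. (One could equally take $\tau_m^{*}=\rho p$ for a fixed $\rho>0$ small enough that $2c_2\rho$ lies below $\liminf_m 2\{1-\Phi(\sqrt{c_1\log m/(1+\psi^2)})\}$, and run the same argument.)
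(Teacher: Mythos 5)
Your argument is correct and follows essentially the same route as the paper's proof: split on whether $\widehat{\tau}$ exceeds a deterministic threshold proportional to $\alpha_m$ (the paper uses $\gamma\alpha_m$ with $\gamma\in(0,1/c_2)$, you use $\alpha_m/(2c_2)$), use the monotonicity of $E(1-\kappa_i|x,\tau)$ in $\tau$ to reduce the main term to the fixed-$\tau$ bound of Theorem \ref{THM_T2_UB}, verify $\alpha_m\sim\mbox{const}\cdot p$ exactly as the paper does, and show the small-$\widehat{\tau}$ event is negligible by dropping the $i$-th summand and applying a concentration inequality. The only difference is cosmetic: you use a multiplicative Chernoff bound where the paper uses Chebyshev's inequality, which gives an even smaller (exponentially small) remainder but changes nothing in the conclusion.
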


 \begin{remark}\label{REM_TAU_CONV}
 Using the architecture of the proofs of Theorem \ref{THM_T1_EB_UB} and Theorem \ref{THM_T2_EB_UB}, one can show now that,
   \begin{equation}\label{CONVERGENCE_TAU}
   \frac{\widehat{\tau}}{p} \stackrel{p}{\longrightarrow} 2\beta \mbox{ as } m\rightarrow \infty,
  \end{equation}
 where the above probability convergence is taken with respect to the joint distribution of $X_i$'s defined through (\ref{TWO_GROUP_X}). Since $\alpha_m\sim 2\beta p$ and $\beta=1-\Phi(c_1C/(2\epsilon))>0$ (see the proof of Theorem \ref{THM_T1_EB_UB}), it will be enough to show that, given any $\delta_0>0$, $\Pr(|\frac{\widehat{\tau}}{\alpha_m}-1|>\delta_0)=o(1)$ as $m\rightarrow \infty$. This follows quite easily using the techniques used for proving Theorem \ref{THM_T1_EB_UB} and Theorem \ref{THM_T2_EB_UB}. Thus, (\ref{CONVERGENCE_TAU}) says that the estimator $\widehat{\tau}$ will be asymptotically of the same order as $p$. Note that when $c_1=2$ and $C/\epsilon$ is small, $2\beta\approx 1$. In that case, $\widehat{\tau}$ is expected to estimate the unknown degree of sparsity $p$ very well.
 \end{remark}
 
\section{Simulations}
In this section, we present and interpret the results obtained in our simulation study. The simulation study has several objectives. The first objective is to motivate the use of the decision rules based on the global-local tail robust priors when data actually come from a two-groups model. Secondly, we want to study empirically the suitability of these priors for handling sparsity as well as their robustness in handling large signals. Thirdly, we want to study the role of $\tau$ as a global shrinkage parameter. The most important objective is to compare the simulation averages of the proportion of misclassified hypotheses (as estimate of the misclassification probability) of these testing rules with that of the Bayes Oracle for the two-groups problem to understand how closely these rules actually perform vis-a-vis the Oracle.\vspace{1.5mm}

We present in this section the numerical results obtained by using the horseshoe prior, the standard double Pareto prior, the Strawderman-Berger prior and the normal-exponential-gamma prior (with $\alpha=1,\beta=0.6$) when the data actually come from a two-groups model. We consider a fully Bayesian approach as well as an empirical Bayes approach. For the fully Bayesian approach, $(\tau, \sigma)$ is assigned a hyperprior given by,
\begin{equation}
   \tau \sim  C^{+}(0,1) \mbox{ and } \pi(\sigma) \mbox{ } \propto \frac{1}{\sigma}.\nonumber 
\end{equation}
and the marginal prior for $\mu_i$'s is obtained by mixing further with respect to this joint prior distribution of $(\tau, \sigma)$. For the empirical Bayes approach, $\sigma$ is taken to be equal to 1, and we use the procedure in (\ref{INDUCED_DECISION_EB}), where we take $c_1=2$ and $c_2=1$ in the definition of $\widehat{\tau}$ in (\ref{TAU_HAT_EMPB}).\vspace{1.5mm}

Our simulation data are generated as follows. For each fixed $p \in (0,1)$, we draw $m=200$ independent observations $X_1, \cdots, X_m$ using the two groups model (\ref{TWO_GROUP_X}), with $\psi_m = \sqrt{2\log{m}}=3.26$ and $\sigma^2 =1$. For estimating the misclassification probability, the process is replicated 1000 times and simulation averages of misclassification proportions are taken as estimates of the misclassification probabilities of the different multiple testing procedures under study. Our results lend support to our theoretical findings and also justification for our theoretical study presented earlier in Section 3 and Section 4.\vspace{1.5mm}
\begin{figure}[ht]
        \begin{center}
                \includegraphics[width=10cm,height=8cm]{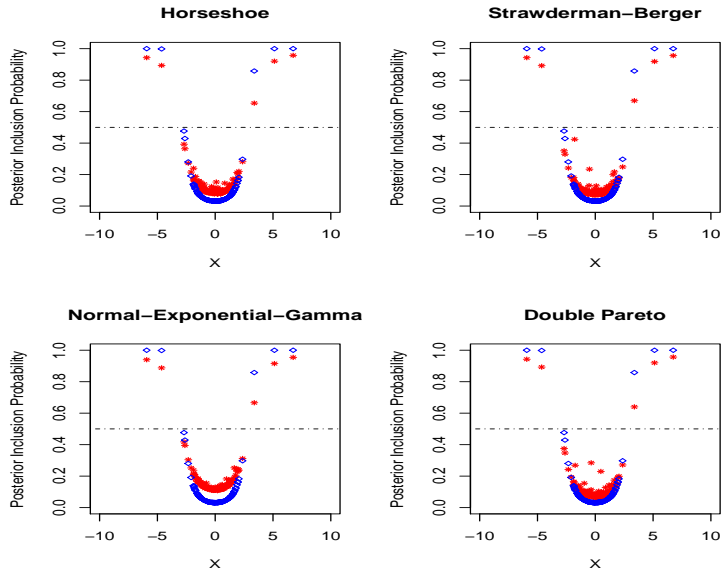}
                \caption{Comparison of the posterior inclusion probabilities and shrinkage coefficients $1-E(\kappa_i|X_1,\cdots,X_m)$ when $p=0.10.$}
        \end{center}
\end{figure}

Taking $p=0.10$, we plot in Figure 1, the theoretical posterior inclusion probabilities $w_i(X_i) = P(\nu_i =1|X_i)$ for the two-groups model (\ref{TWO_GROUP_X}) given by
\begin{equation}
 \omega_i(X_i) = \pi(\nu_i=1|X_i) = \bigg\{\big(\frac{1-p}{p}\big)\sqrt{1+\psi^2}e^{-\frac{X_i^2}{2}\frac{\psi^2}{1+\psi^2}}+1  \bigg\}^{-1},\nonumber
 \end{equation}
along with the shrinkage weights $(1-E(\kappa_i|X_1,\cdots,X_m))$ corresponding to the four one-group shrinkage priors mentioned above against the data. The blue dots in the figure denote the theoretical posterior inclusion probabilities while the red dots correspond to the shrinkage weights $(1-E(\kappa_i|X_1,\cdots,X_m))$. The figures clearly show the proximity of the two quantities for small values of the sparsity parameter $p$ for each of the four shrinkage priors mentioned above. This fact and the theoretical observations made in Section 2 justify the use of $(1-E(\kappa_i|X_1,\cdots,X_m))$ as an approximation to the corresponding posterior inclusion probabilities $\omega_i(X_i)$ in sparse situations and thus motivates the use of decision rules based on $(1-E(\kappa_i|X_1,\cdots,X_m))$ using one-group tail robust shrinkage priors.\vspace{1.5mm}

\begin{figure}[ht]
        \begin{center}
                \includegraphics[width=12cm,height=8cm]{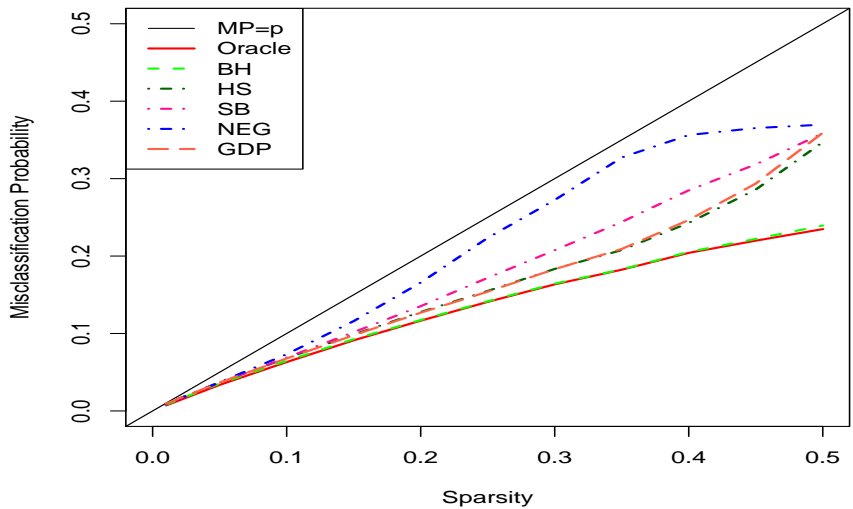}
                \caption{Estimated misclassification probabilities for the full Bayes approach}
        \end{center}
\end{figure}
 
Figure 2 shows the (estimated) misclassification probability (MP) plots of the decision rule (\ref{INDUCED_DECISION_FB}) corresponding to the four priors under consideration along with those of the Bayes Oracle and the Benjamini-Hochberg rule against values of $p\in \{0.01, 0.05, 0.1, 0.15, 0.2, 0.25, 0.3, 0.35, 0.4, 0.45, 0.5 \}.$ As mentioned before, these are obtained as average values of misclassification proportions of the decision rule (\ref{INDUCED_DECISION_FB}) and that corresponding to the Bayes Oracle (defined in Section 3.1). The Bayes Oracle serves as the lower bound to the MP whereas the line $MP=p$ corresponds to the situation when we reject all null hypotheses without looking into the data. It is clear from Figure 2 that when the sparsity parameter $p$ is small, the MP plots corresponding to the four priors under consideration almost coincide with that of the Bayes Oracle which is in conformity with the theoretical results of the present article. While the MP plots of the horseshoe and the standard double Pareto prior are nearly identical, the MP plot corresponding to the the Strawderman-Berger prior is in close proximity. The same is true for the MP plot for the normal-exponential-gamma plot, for small values of $p$. When $p$ is larger, say above 0.4, performance of each of these priors become inferior compared to the Bayes Oracle. We have also plotted the MP for the Benjamini-Hochberg rule, for $\alpha = 1/\log{m}=0.1887$. \cite{BCFG2011} theoretically established that for such choices of $\alpha$, the corresponding Benjamini-Hochberg rule becomes ABOS in the present set up. This is also corroborated by Figure 2 where we see that the Benjamini-Hochberg rule achieves practically the same MP as the Bayes Oracle. Similar phenomenon can also be observed for the empirical Bayes procedure (\ref{INDUCED_DECISION_EB}) also in Figure 3 below. It should be noted that, a comparison between Figure 2 and Figure 3 clearly shows that performance of the normal-exponential-gamma prior (with $\alpha=1$ and $\beta=0.6$) in terms of the overall MP in the empirical Bayes approach is substantially better compared to its full Bayes counterpart.\vspace{1.5mm} 	

\begin{figure}[ht]
        \begin{center}
                \includegraphics[width=12cm,height=8cm]{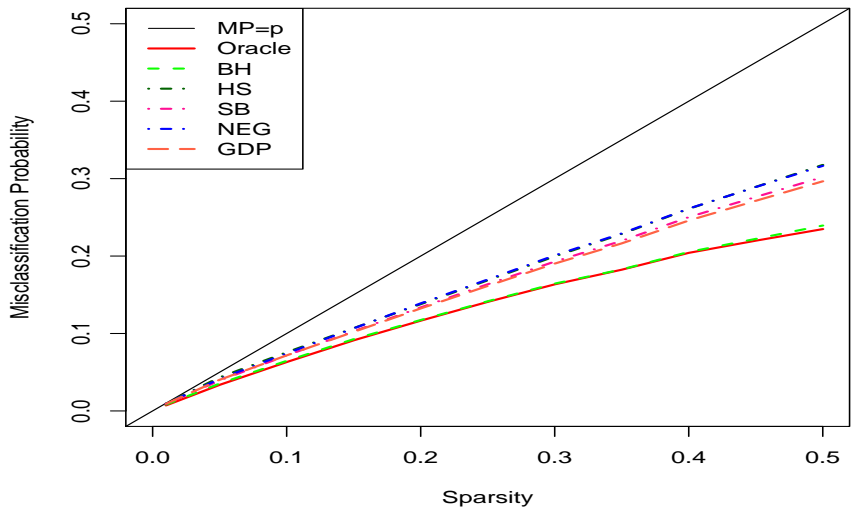}
                \caption{Estimated misclassification probabilities for the empirical Bayes approach}
        \end{center}
\end{figure}

\begin{figure}[ht]
        \begin{center}
                \includegraphics[width=12cm,height=8cm]{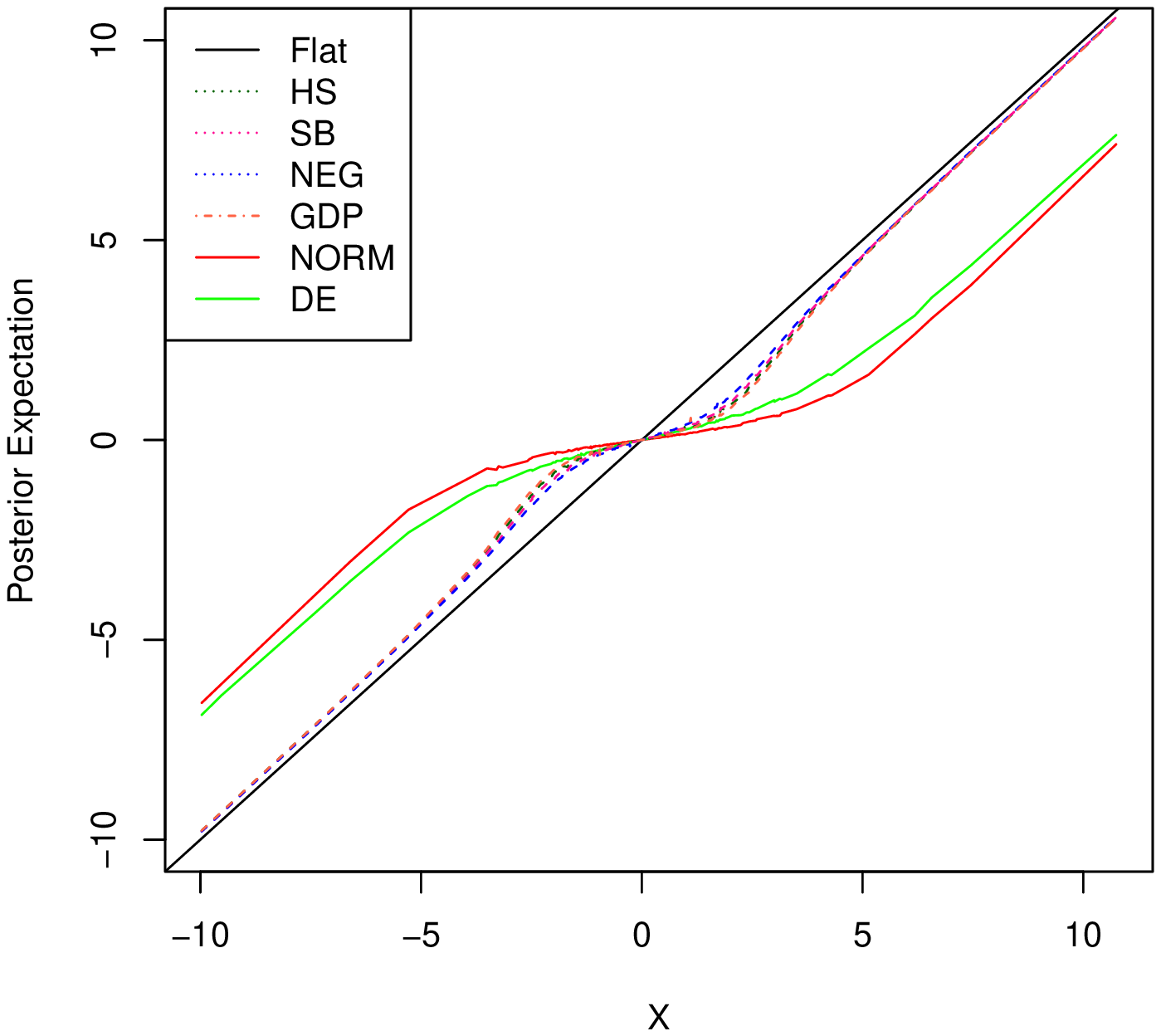}
                \caption{Posterior Mean $E(\mu|X)$ versus $X$ plot for p=0.25}
        \end{center}
\end{figure}
Shrinkage properties corresponding to the four tail-robust priors under consideration along with the Laplace and the half-normal priors having exponential tails are demonstrated through Figure 4. Here we plot the posterior expectations $E(\mu_i|X_1,\cdots,X_m)$ against different values ($X_i$) of the observations. Figure 4 clearly shows that the noise observations are shrunk towards zero efficiently while the big signals are left mostly unshrunk by each of the four tail robust shrinkage priors under consideration, while the normal and the Laplace shrink even the large signals by some non-diminishing amounts. Similar observations was also made in \cite{DG2013} about the Laplace and the half-normal priors.\vspace{1.5mm}
\begin{figure}[ht]
        \begin{center}
                \includegraphics[width=12cm,height=8cm]{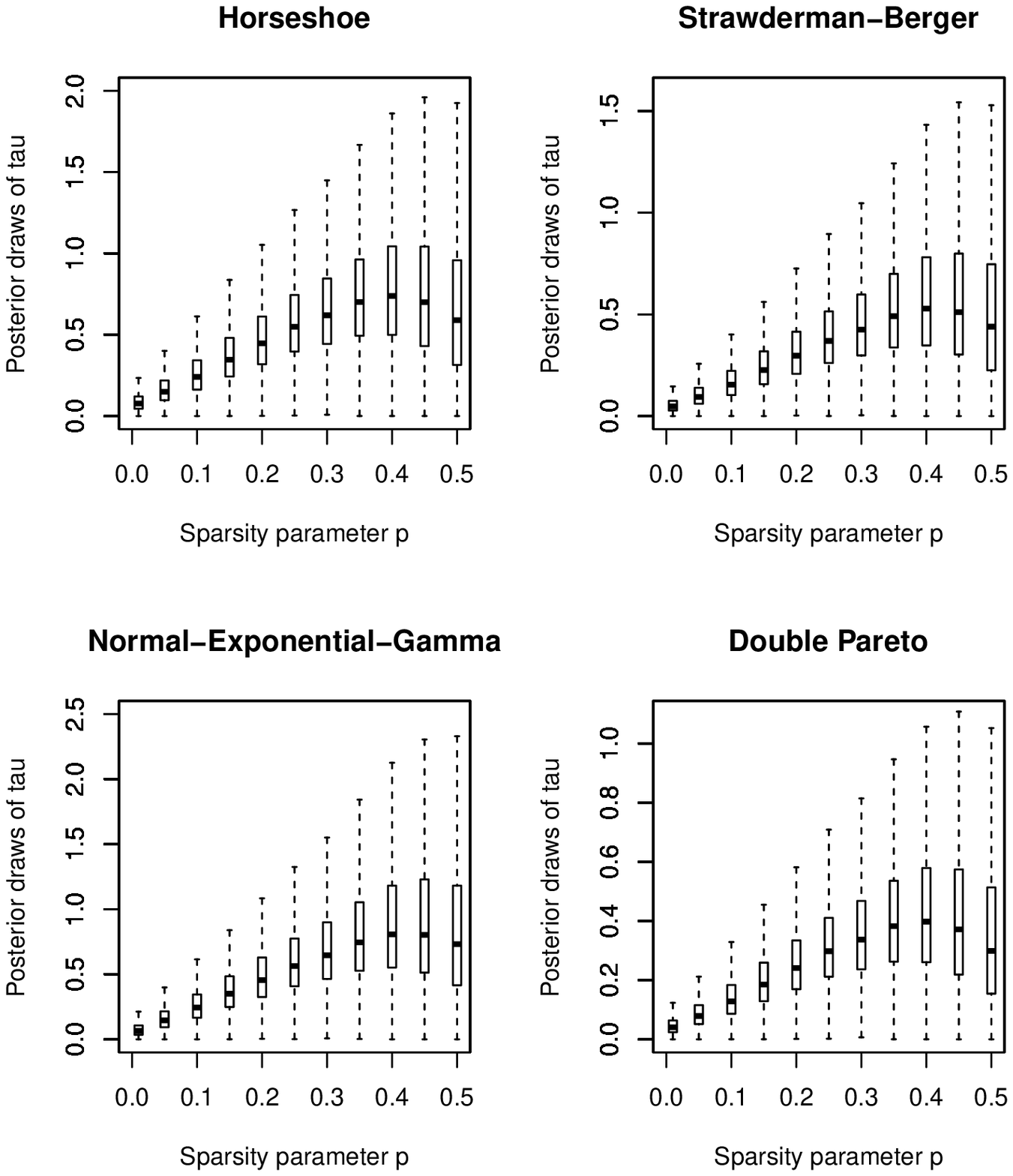}
                \caption{Posterior draws of tau for the horseshoe, the Strawderman-Berger, the normal-exponential-gamma and the standard double Pareto priors across different levels of sparsity}
        \end{center}
\end{figure}

Finally, we demonstrate in Figure 5 how the global shrinkage parameter $\tau$ adapts to the sparsity level of the data in the full Bayes approach. We draw box-plots of the posterior draws for $\tau$ across different levels of sparsity for each of the four priors in focus. It may be observed that box-plots are highly concentrated near zero for small values of $p$ while as $p$ gets large, the range of the box-plots systematically become wider and the median of the posterior samples of $\tau$ show an overall trend. Thus the overall sparsity level of the data is reflected well through the posterior distribution of the global shrinkage parameter $\tau$. Similar phenomenon was observed in \cite{DG2013} in the context of the horseshoe prior. This indicates how global shared parameters can control error rates in multiple testing by estimating the overall sparsity level, as already discussed in \cite{SB2006} and \cite{CPS2010}.

\section{Discussion}
 We have considered in this paper multiple hypothesis testing under sparsity in a decision theoretic framework. Global-local shrinkage priors are used towards this end. We have proved an Oracle property of the resulting decision rules similar to those of \cite{DG2013}. We have also considered an empirical Bayes version of the induced decisions by using an estimate of the global shrinkage parameter $\tau$ and shown its Oracle optimality property under very mild restriction over the sparsity parameter $p$. One of the salient features of our work is that we have provided unified results for a very general class of shrinkage priors including some of the commonly used priors such as the horseshoe prior, generalized double Pareto priors, normal-exponential-gamma priors and many others. As a special case of our general result, we have strengthened the optimality result for the horseshoe prior as considered in \cite{DG2013}. Further, we have settled a conjecture of these authors related to generalized double Pareto priors. Our technique of proof shows that some of the arguments of \cite{DG2013} can be simplified. Moreover, in the theoretical treatment of this paper, we have exploited properties of slowly varying functions to obtain a general unifying argument that works across a large class of one-group priors for investigating their theoretical properties, which to the best of our knowledge, has not been done before and can be very useful in other contexts.\vspace{1.5mm}

 The Oracle optimality property of the multiple testing rules studied in this paper, assumes $\sigma^2$ to be known and treats $\tau$ as a tuning parameter or it is estimated from the data. A natural question is whether these decision rules retain this optimality property if we use a full Bayes approach by assigning a hyperprior to $(\tau,\sigma)$. Thus one would like to know whether the decision rules in (\ref{INDUCED_DECISION_FB}) enjoy similar Bayesian optimality property. Our simulation results indicate that this is indeed the case, though giving a formal theoretical justification for the same is difficult. To elucidate this point, let us first take a look at the posterior means of $\mu_i$'s based on which the full Bayes decision rules in (\ref{INDUCED_DECISION_FB}) are defined. For each $i=1,\dots,m$, it is given by,
 \begin{eqnarray}
  E(\mu_i|X_1,\cdots,X_m)
  &=&\int_{0}^{\infty}\int_{0}^{\infty}E(\mu_i|X_1,\cdots,X_m,\tau,\sigma)\pi(\tau,\sigma|X_1,\cdots,X_m)d\tau d\sigma\nonumber\\
  &=& (1-E(\kappa_i|X_1,\cdots,X_m))X_i\nonumber
 \end{eqnarray}
 where $E(\kappa_i|X_1,\cdots,X_m)=E_{(\tau,\sigma|X_1,\cdots,X_m)}E(\kappa_i|X_1,\cdots,X_m,\tau,\sigma)$. Here, the posterior distribution of $\kappa_i$ depends on the entire dataset and not only on $X_i$, which is the case when $\tau$ is treated as a tuning parameter and $\sigma^2$ is assumed to be known. Also, for each $i$, the posterior distribution $\pi(\kappa_i|X_1,\cdots,X_m)$ is analytically quite intractable in this case. Therefore, finding estimates or asymptotic bounds on the type I and type II error measures directly using this posterior distribution does not look feasible. By looking at the expression for $E(\kappa_i|X_1,\ldots,X_m)$ given above, one may however wish to explore for possibilities of using the concentration and moment inequalities based on $E(\kappa_i|X_1,\ldots,X_m,\tau,\sigma)$ first (as given in Section 4.2), and then finding suitable bounds for $E(\kappa_i|X_1,\ldots,X_m)$ using $\pi(\tau,\sigma|X_1,\ldots,X_m)$. But that also seems very non-trivial as we do not have any proper handle over $\pi(\tau,\sigma|X_1,\ldots,X_m)$. The problem does not become any easier even if we assume $\sigma^2$ to be known and put a hyperprior for $\tau$ only. It seems that one needs to invoke significantly new techniques and arguments for a full Bayes treatment of the type of one-group priors studied in this paper. We leave this as an interesting problem for future research.\vspace{1.5mm}
     
We expect that Oracle optimality properties like those studied in this paper, should now easily be obtained even if one considers $\pi(\lambda_i^2) \sim (\lambda_i^2)^{-a-1}L(\lambda_i^2)$ as $\lambda_i^2 \rightarrow \infty$, as originally considered in Theorem 1 of \cite{PS2011}, by employing the techniques given in this paper together with the fact that the ratio of these two functions belongs to a small neighborhood of 1 not containing the origin, for all sufficiently large $\lambda_i^2$. It may be commented at this point that we have provided a set of sufficient conditions that are quite general in nature under which such Bayesian optimality results hold true. However, characterizing priors of this kind having such optimality property is quite challenging and remains an interesting and open problem till date.\vspace{1.5mm}
  
Our results bear the potential of application also in variable selection. In particular, we want to examine how to extend the proposed decision rule for selection of regression parameters and study optimality of such decision rules in that context. It has been mentioned earlier that \cite{PKV2014} considered the problem of estimation of a sparse multivariate normal vector using the horseshoe estimator. They showed among other things that the horseshoe estimator achieves (up to a multiplicative factor) the minimax squared error risk and the corresponding posterior distribution contracts around the truth at the minimax rate. A similar optimality result was proved for an empirical Bayes version of the horseshoe estimator. The thing to be noted is that the ``optimal'' choice of $\tau$ that makes all the good results come through is given by the theoretical proportion of non-zero entries $p$ in the mean vector (up to a logarithmic factor), and choices of $\tau$ of the order of $p^{\alpha}$ for $\alpha > 1$ and $\alpha < 1$ were shown to be sub-optimal for such purposes. It is interesting to note, as already argued in Section 4.2, that a choice of $\tau$ which is asymptotically of the order of $p$ seems to be the optimal choice of $\tau$ in our case, when $p$ is assumed to be known. Therefore choice of $\tau$ in the vicinity of $p$ seems to be optimal for the two most important inferential problems. In a recent technical report, \cite{GC2015} adopted the framework of \cite{PKV2014} and proved generalizations of their results using Bayes estimators coming from a very general class of one-group tail robust priors and taking $\tau$ proportional to the fraction $p$. Proof of these results crucially exploit the inequalities given in Theorem \ref{THM_MOMENT_INEQ} and Theorem \ref{THM_CONCEN_INEQ_2} of the present article. It is also worth noting that the empirical Bayes estimate of $\tau$ that turned out to be useful in the paper of \cite{PKV2014} also turns out to be equally handy in the present multiple testing problem. We have already commented that a possible reason for this is the fact that the empirical Bayes estimate $\widehat{\tau}$ proposed by \cite{PKV2014}, is asymptotically of the same order of $p$ under the present two-groups formulation.\vspace{1.5mm}
 
Like \cite{DG2013} or \cite{PKV2014}, our theoretical results mostly treat the global shrinkage parameter $\tau$ as a tuning parameter. As explained before, a fully Bayesian approach using a hyperprior on $\tau$ seems difficult to handle. The difficulty in analyzing the posterior distribution for a full Bayes analysis corresponding to horseshoe-type priors with heavy tails has also been addressed in \cite{BPPD2012, BPPD2014} and \cite{PBPD2014}. The authors of these articles studied the contraction properties of global-local shrinkage priors and proved their sub-optimal properties in cases where the tails of the distributions decay at an exponential or faster rate. However, they conjectured that heavy tailed priors, such as, the horseshoe, the generalized double Pareto and the normal-exponential-gamma priors, should have optimal posterior contraction properties. This has already been established in \cite{PKV2014} for the horseshoe prior and subsequently by \cite{GC2015} for a broader class of one-group tail-robust priors which is rich enough to include, among others, the horseshoe, the generalized double Pareto and the normal-exponential-gamma priors in particular. Results of the last two articles, however, assume $\tau$ to be a tuning parameter. On the other hand, \cite{BPPD2012, BPPD2014} and \cite{PBPD2014} used full Bayes approaches for studying concentration probabilities around sparse vectors and rate of convergence of the corresponding posterior distributions based on their proposed class of shrinkage priors, referred to as Dirichlet-Laplace (DL)-type priors. Such priors result in a prior distribution for the individual means which is highly peaked near the origin and has thick tails. However, the DL-type priors are fundamentally different from the global-local scale mixture representation of \cite{PS2011} and hence cannot be covered by our general class of tail robust one-group priors. For example, \cite{BPPD2014} used a set of scalars $(\phi_1\tau,\ldots,\phi_m\tau)$ instead of a single global shrinkage component $\tau$ and assigned a {\it Dirichlet} prior to $(\phi_1,\ldots,\phi_m)$, with a common Dirichlet concentration parameter. Moreover, the asymptotics of contraction for the DL-type priors are quite different from those derived in this paper, and hence do not seem to apply to our present asymptotic framework, at least, as far as we can see. However, we would like to emphasis that studying the posterior contraction properties of the kind of one-group shrinkage priors considered in this paper, using a full Bayes approach, remains an interesting and open research problem till date. We hope to address this problem somewhere else in future.

 
\appendix  
\section{Appendix}
\label{app}
\subsection{Some Important Properties of Slowly Varying Functions}
The theoretical results derived in this paper depend heavily up on some fundamental properties of general slowly varying functions. These are listed as Lemma \ref{LEM_A_1} - \ref{LEM_A_4}, given below. Interested readers are referred to the classic text of \cite{BGT1987} for a detailed treatment of these results.

 \begin{lem}\label{LEM_A_1}
  If $L$ is any slowly varying function and $\alpha < -1,$ then
  \begin{equation}
   \frac{\int_{x}^{\infty} t^{\alpha}L(t)dt}{x^{\alpha+1}L(x)} \sim \frac{-1} {\alpha+1} \mbox{ as } x \rightarrow \infty.\nonumber
  \end{equation}
 \end{lem}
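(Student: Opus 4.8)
The plan is to reduce the statement to a dominated-convergence argument via a scaling change of variables, and then to invoke the two standard workhorses of the theory of regular variation: the uniform convergence theorem and Potter's bounds for slowly varying functions (see \cite{BGT1987}). First I would observe that since $L$ is slowly varying and $\alpha < -1$, Potter's bounds guarantee that $t^{\alpha}L(t)$ is integrable on $(x,\infty)$ for every sufficiently large $x$, so that the integral in the numerator is well defined for all large $x$. Substituting $t = xu$ gives
\[
\int_{x}^{\infty} t^{\alpha}L(t)\,dt = x^{\alpha+1}L(x)\int_{1}^{\infty} u^{\alpha}\,\frac{L(xu)}{L(x)}\,du,
\]
so the claimed asymptotic is equivalent to showing
\[
\int_{1}^{\infty} u^{\alpha}\,\frac{L(xu)}{L(x)}\,du \;\longrightarrow\; \int_{1}^{\infty} u^{\alpha}\,du \;=\; \frac{-1}{\alpha+1} \qquad \text{as } x \to \infty.
\]

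Next I would establish this convergence by splitting the range of integration. By the uniform convergence theorem for slowly varying functions, $L(xu)/L(x) \to 1$ as $x \to \infty$ uniformly for $u$ in any compact subset of $[1,\infty)$, which disposes of the contribution of $[1,T]$ for any fixed $T$. For the tail $[T,\infty)$, I would fix $\delta > 0$ small enough that $\alpha + \delta < -1$ and apply Potter's bound: there is $X$ such that for all $x \ge X$ and all $u \ge 1$ one has $L(xu)/L(x) \le 2u^{\delta}$, and therefore $u^{\alpha}L(xu)/L(x) \le 2u^{\alpha+\delta}$, a function that is integrable on $[1,\infty)$ and independent of $x$. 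The dominated convergence theorem then yields the displayed limit, and combining it with the change of variables above completes the proof.

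The main obstacle is precisely the interchange of the limit $x \to \infty$ with the integral over the unbounded range $[1,\infty)$: the uniform convergence $L(xu)/L(x)\to 1$ is available only on compact sets of $u$, so one genuinely needs the Potter-type uniform upper bound to produce an $x$-free integrable dominating function; everything else is routine. As an alternative route one could argue directly from the Karamata representation $L(x) = c(x)\exp\{\int_{a}^{x}\varepsilon(t)/t\,dt\}$ with $c(x)\to c>0$ and $\varepsilon(t)\to 0$, but this essentially re-derives the Potter bound along the way, so I would prefer to quote the standard results.
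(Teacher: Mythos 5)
Your proposal is correct. The paper does not actually prove Lemma A.1 --- it is quoted as a standard fact (Karamata's theorem for integrals of regularly varying functions) with a reference to \cite{BGT1987} --- and your argument is essentially the standard proof given there: the substitution $t=xu$ reduces the claim to $\int_{1}^{\infty}u^{\alpha}L(xu)/L(x)\,du\to -1/(\alpha+1)$, and Potter's bound with $\alpha+\delta<-1$ supplies an $x$-free integrable dominating function so that dominated convergence (pointwise convergence $L(xu)/L(x)\to 1$ being immediate from slow variation) finishes the proof. One small remark: once you have the Potter domination on all of $[1,\infty)$ for $x\ge X$, the separate appeal to the uniform convergence theorem on compact $u$-sets is redundant --- dominated convergence alone covers the whole range --- though including it does no harm.
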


 \begin{lem}\label{LEM_A_2}
 If $L$ is any slowly varying function then there exists $A_0 > 0$ such that $L$ is locally bounded in $[A_0,\infty)$, that is, $L$ is bounded in all compact subsets of $[A_0,\infty).$
 \end{lem}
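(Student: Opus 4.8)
The plan is to reduce the statement to the Uniform Convergence Theorem for slowly varying functions (see \cite{BGT1987}), which asserts that for a measurable slowly varying $L$ on $(A,\infty)$ one has $L(\lambda x)/L(x) \to 1$ as $x \to \infty$, uniformly in $\lambda$ over every compact subset of $(0,\infty)$. The measurability of $L$ is genuinely needed here and is available in our setting, since $L$ arises from the bona fide probability density $\pi(\lambda_i^2) = K(\lambda_i^2)^{-a-1}L(\lambda_i^2)$ in (\ref{LAMBDA_PRIOR}); without measurability the conclusion of the lemma can fail.

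First I would apply the Uniform Convergence Theorem with the fixed compact interval $[1,2]$: there exists $A_0 > A$ such that
\[
 \tfrac12 \;\le\; \frac{L(\lambda x)}{L(x)} \;\le\; 2 \qquad \text{for all } x \ge A_0 \text{ and all } \lambda \in [1,2].
\]
A short doubling/bootstrap step then promotes this to all of $[A_0,\infty)$: for any integer $k \ge 0$ and any $y \in [2^{k}A_0,\,2^{k+1}A_0]$, writing $y = \lambda\,2^{k}A_0$ with $\lambda \in [1,2]$ and iterating the displayed inequality along $A_0, 2A_0, 4A_0, \ldots, 2^{k}A_0$ yields $L(y) \le 2^{k+1}L(A_0) < \infty$. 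Since any compact subset of $[A_0,\infty)$ is contained in $[A_0,\,2^{k_1}A_0]$ for $k_1$ large enough, $L$ is bounded on it by $2^{k_1}L(A_0)$, which is precisely the assertion of the lemma. Equivalently, and even more directly, one can invoke Karamata's Representation Theorem, $L(x) = c(x)\exp\!\big(\int_{A_0}^{x}\varepsilon(t)/t\,dt\big)$ for $x \ge A_0$ with $c(\cdot)$ bounded and bounded away from $0$ and $\varepsilon(\cdot)$ measurable and bounded: the exponential factor is continuous, hence locally bounded, so the local boundedness of $L$ is immediate.

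The substantive content is the Uniform Convergence Theorem (equivalently the Representation Theorem) itself, not the elementary deduction above; this is where any real work would lie. Its proof passes to $h(u) := \log L(e^{u})$, which satisfies $h(u+v) - h(v) \to 0$ as $v \to \infty$ for each fixed $u$, and then a measure-theoretic, Baire-category / Steinhaus-type argument on the level sets $\{u : |h(u+v)-h(u)| \le \varepsilon \text{ for all sufficiently large } v\}$ upgrades this pointwise convergence to uniform convergence on compacta. Since all of this is entirely classical, for the purposes of the paper I would simply cite \cite{BGT1987}, where Lemma \ref{LEM_A_2} is stated directly, and optionally record the one-line deduction above for completeness.
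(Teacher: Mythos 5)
Your proposal is correct and matches the paper's treatment: the paper gives no proof of this lemma and simply refers the reader to \cite{BGT1987}, which is exactly what you do, and your supplementary derivation (Uniform Convergence Theorem on $\lambda\in[1,2]$ plus the doubling iteration, or directly Karamata's Representation Theorem) is a sound rendering of the standard argument, with measurability correctly identified as essential (it is in fact already part of the paper's definition of slow variation). Nothing further is needed.
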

 
  \begin{lem}\label{LEM_A_3}
  If $L$ is any slowly varying function, $A_0$ is so large such that $L$ is locally bounded in $[A_0,\infty)$ and $\alpha > -1,$ then
  \begin{equation}
   \frac{\int_{A_0}^{x} t^{\alpha}L(t)dt}{x^{\alpha+1}L(x)} \sim \frac{1}{1+\alpha} \mbox{ as } x \rightarrow \infty.\nonumber
  \end{equation}
 \end{lem}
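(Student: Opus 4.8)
The plan is to establish this Karamata-type statement by the substitution $t=xs$, which recasts the ratio as an integral whose integrand is controlled by the slow variation of $L$. Concretely,
\[
  \frac{\int_{A_0}^{x} t^{\alpha}L(t)\,dt}{x^{\alpha+1}L(x)}
  \;=\; \int_{A_0/x}^{1} s^{\alpha}\,\frac{L(xs)}{L(x)}\,ds ,
\]
so it suffices to prove that the right-hand side tends to $\int_{0}^{1}s^{\alpha}\,ds=\frac{1}{1+\alpha}$ as $x\to\infty$. Heuristically this is clear: for each fixed $s>0$ the slow variation of $L$ gives $L(xs)/L(x)\to1$, and $s^{\alpha}$ is integrable near $0$ since $\alpha>-1$; the real content is justifying the passage of the limit under the integral sign uniformly near the lower endpoint.

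I would fix a small $\delta\in(0,1)$ and split the integral at $\delta$. On the compact interval $[\delta,1]$ the Uniform Convergence Theorem for slowly varying functions (see \cite{BGT1987}) yields $L(xs)/L(x)\to1$ uniformly in $s\in[\delta,1]$, whence $\int_{\delta}^{1}s^{\alpha}\,\frac{L(xs)}{L(x)}\,ds\to\int_{\delta}^{1}s^{\alpha}\,ds=\frac{1-\delta^{1+\alpha}}{1+\alpha}$ as $x\to\infty$, for each fixed $\delta$.

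The main obstacle is the piece near $0$, namely $\int_{A_0/x}^{\delta}s^{\alpha}\,\frac{L(xs)}{L(x)}\,ds$, where $L(xs)/L(x)$ may fail to be close to $1$ and need not even stay bounded as $s\downarrow0$. Here I would use Potter's bound: having fixed $\epsilon\in(0,1+\alpha)$, there is $X\ge A_0$ such that $L(y)/L(z)\le 2\max\{(y/z)^{\epsilon},(y/z)^{-\epsilon}\}$ for all $y,z\ge X$; since $s\le 1$ forces $x\ge xs$ on this range, it follows that $L(xs)/L(x)\le 2s^{-\epsilon}$ whenever $xs\ge X$, so $\int_{X/x}^{\delta}s^{\alpha}\,\frac{L(xs)}{L(x)}\,ds\le 2\int_{0}^{\delta}s^{\alpha-\epsilon}\,ds=\frac{2\,\delta^{1+\alpha-\epsilon}}{1+\alpha-\epsilon}$, uniformly in $x$, which is small when $\delta$ is small. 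On the leftover sliver $s\in[A_0/x,X/x]$ one has $xs\in[A_0,X]$, so $L(xs)$ is bounded by some constant $M_X$ by Lemma \ref{LEM_A_2}, and $\int_{A_0/x}^{X/x}s^{\alpha}\,ds\le \frac{(X/x)^{1+\alpha}}{1+\alpha}$; dividing by $L(x)$ and noting that $x^{1+\alpha}L(x)\to\infty$ (it is regularly varying of positive index $1+\alpha$) makes this contribution $O\!\big(x^{-(1+\alpha)}/L(x)\big)=o(1)$ as $x\to\infty$. Assembling the three parts: given any $\eta>0$, choose $\delta$ small enough that the Potter piece and the residual $\delta^{1+\alpha}/(1+\alpha)$ gap are each below $\eta$, then let $x\to\infty$; this forces the $\limsup$ and $\liminf$ of the whole ratio to lie within $O(\eta)$ of $\frac{1}{1+\alpha}$, and letting $\eta\downarrow0$ completes the argument.

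For completeness I would also record the alternative derivation via the Karamata representation $L(x)=c\exp\!\big(\int_{A_0}^{x}\epsilon(u)u^{-1}\,du\big)$ with $c>0$ and $\epsilon(u)\to0$: then $x^{1+\alpha}L(x)$ is absolutely continuous with a.e.\ derivative $(1+\alpha+\epsilon(x))\,x^{\alpha}L(x)$, so integrating gives $x^{1+\alpha}L(x)=A_0^{1+\alpha}L(A_0)+(1+\alpha)\int_{A_0}^{x}t^{\alpha}L(t)\,dt+\int_{A_0}^{x}\epsilon(t)\,t^{\alpha}L(t)\,dt$; since $\epsilon(t)\to0$ the last integral is $o\big(\int_{A_0}^{x}t^{\alpha}L(t)\,dt\big)$ and the constant term is negligible once one knows $\int_{A_0}^{x}t^{\alpha}L(t)\,dt\to\infty$, which yields the claim at once. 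The substitution argument is the one I would write out in full detail.
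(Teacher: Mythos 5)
Your proof is correct. Note that the paper itself gives no proof of Lemma \ref{LEM_A_3}: it is quoted as a standard fact about slowly varying functions with a pointer to \cite{BGT1987}, and your argument --- substitute $t=xs$, use the Uniform Convergence Theorem on $[\delta,1]$, Potter's bound with $\epsilon\in(0,1+\alpha)$ on $[X/x,\delta]$, and local boundedness (Lemma \ref{LEM_A_2}) together with $x^{1+\alpha}L(x)\to\infty$ (Lemma \ref{LEM_A_4}) on the sliver $[A_0/x,X/x]$ --- is essentially the classical proof of the direct half of Karamata's theorem given in that reference, so there is nothing to bridge. The only small blemish is in your optional aside: the Karamata representation gives $L(x)=c(x)\exp\big(\int_{A_0}^{x}\epsilon(u)u^{-1}\,du\big)$ with $c(x)\to c>0$ rather than a constant $c$, so that sketch would need a reduction to a normalized slowly varying function; since you explicitly rest the proof on the substitution argument, this does not affect its validity.
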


 \begin{lem}\label{LEM_A_4}
  If $L$ is any slowly varying function then
  \begin{enumerate}[(i)]
   \item $\lim\limits_{x \rightarrow \infty} \frac{\log{L(x)}}{\log{(x)}}=0,$
   \item $L^{\beta}$ is slowly varying for all $\beta \in \mathbb{R},$ and,
   \item $\lim\limits_{x \rightarrow \infty} x^{-\alpha}L(x)=0,$ for all $\alpha > 0$.
  \end{enumerate}
 \end{lem}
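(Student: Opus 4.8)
The plan is to take (ii) as essentially immediate, to establish (i) as the substantive fact via Karamata's representation theorem, and then to deduce (iii) from (i) by a one-line exponentiation argument.

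For (ii), fix $\beta\in\mathbb{R}$ and $\alpha>0$. Since $L$ is positive and slowly varying, $L(\alpha x)/L(x)\to 1$, and because $t\mapsto t^{\beta}$ is continuous on $(0,\infty)$ we get $(L(\alpha x))^{\beta}/(L(x))^{\beta}=\bigl(L(\alpha x)/L(x)\bigr)^{\beta}\to 1$; hence $L^{\beta}$ is slowly varying. No deep input is required, and positivity of $L$ (assumed throughout) is exactly what licenses this for negative $\beta$.

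For (i) I would invoke Karamata's representation theorem (see \cite{BGT1987}): a measurable slowly varying $L$, locally bounded on $[a,\infty)$ for some $a>0$ (which holds here by Lemma \ref{LEM_A_2}), admits the representation
\begin{equation*}
 L(x)=c(x)\exp\!\left(\int_{a}^{x}\frac{\epsilon(t)}{t}\,dt\right),\qquad c(x)\to c\in(0,\infty),\ \ \epsilon(t)\to 0 \text{ as } t\to\infty .
\end{equation*}
Taking logarithms,
\begin{equation*}
 \frac{\log L(x)}{\log x}=\frac{\log c(x)}{\log x}+\frac{1}{\log x}\int_{a}^{x}\frac{\epsilon(t)}{t}\,dt .
\end{equation*}
The first term vanishes because $\log c(x)\to\log c$ is bounded while $\log x\to\infty$. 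For the second, given $\varepsilon>0$ pick $T\ge a$ with $|\epsilon(t)|\le\varepsilon$ for $t\ge T$; then $\bigl|\int_a^x\epsilon(t)/t\,dt\bigr|\le\bigl|\int_a^T\epsilon(t)/t\,dt\bigr|+\varepsilon\log(x/T)$, so $\limsup_{x\to\infty}\frac{1}{\log x}\bigl|\int_a^x\epsilon(t)/t\,dt\bigr|\le\varepsilon$, and letting $\varepsilon\downarrow 0$ gives (i). An alternative route, if one prefers to avoid the representation theorem, is to use the Uniform Convergence Theorem for slowly varying functions together with a Cesàro averaging argument along $x=2^{n}$; I would fall back on that if a self-contained treatment of the representation were unavailable.

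For (iii), given $\alpha>0$ write $x^{-\alpha}L(x)=\exp\!\left(\log x\left(\frac{\log L(x)}{\log x}-\alpha\right)\right)$. By (i) the bracketed factor is at most $-\alpha/2$ for all large $x$, so $x^{-\alpha}L(x)\le x^{-\alpha/2}\to 0$. The only genuinely non-elementary ingredient in the whole lemma is the representation (equivalently, the uniform convergence) theorem underlying (i); it is precisely there that the measurability hypothesis on $L$ is consumed, and without it (i) can fail. Everything else is bookkeeping, so I expect (i)—or rather its reliance on the classical theory in \cite{BGT1987}—to be the main obstacle to a fully self-contained proof.
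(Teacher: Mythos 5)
Your proof is correct. The paper does not prove Lemma \ref{LEM_A_4} at all — it simply refers the reader to \cite{BGT1987} — and your argument (part (ii) by continuity of $t\mapsto t^{\beta}$ on $(0,\infty)$, part (i) via Karamata's representation theorem, and part (iii) by exponentiating (i)) is exactly the standard development found in that reference, so there is nothing to add beyond noting that the measurability of $L$, assumed in the paper's definition, is indeed what the representation theorem consumes.
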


 \subsection{Proofs}
 This section contains the proofs of all the major theoretical results of this paper. But before proving the main theoretical results given in Section 3, namely, Theorem \ref{THM_BAYES_RISK_UBLB} and Theorem \ref{THM_BAYES_RISK_EB_UB}, we shall first present the proofs of the results of Section 4 since these results are essential for deriving Theorem \ref{THM_BAYES_RISK_UBLB} and Theorem \ref{THM_BAYES_RISK_EB_UB}. But above all, let us first prove the following lemma, which we shall use often for proving rest of the results of this paper.
 \begin{lem}\label{LEM_A_5}
 Let $L : (0,\infty) \rightarrow (0, \infty)$ be a measurable function and $a$ be a real number such that $\int_{0}^{\infty}t^{-a-1} L(t) dt = K^{-1} $, $K \in (0, \infty)$. Then, assuming $\tau \rightarrow 0$,
  \begin{equation}
  \int_{0}^{1} u^{a+\frac{1}{2}-1}(1-u)^{-a-1}L\bigg(\frac{1}{\tau^2}\big(\frac{1}{u}-1\big)\bigg)du
   = K^{-1}\tau^{-2a}(1+o(1)),\nonumber 
  \end{equation}
 where the $o(1)$ term above depends on $\tau$ in such a way that $\lim_{\tau \rightarrow 0} o(1)=0$. 
 \end{lem}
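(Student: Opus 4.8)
The plan is to collapse the integral to a single application of a convergence theorem by means of the substitution that linearises the argument of $L$. Concretely, I would set $t=\tau^{-2}\bigl(u^{-1}-1\bigr)$, equivalently $u=(1+\tau^2 t)^{-1}$, which is a decreasing bijection of $(0,1)$ onto $(0,\infty)$ carrying the point inside $L$ exactly onto $t$. Then $du=-\tau^2(1+\tau^2 t)^{-2}\,dt$, while $u^{a-1/2}=(1+\tau^2 t)^{1/2-a}$ and $(1-u)^{-a-1}=(\tau^2 t)^{-a-1}(1+\tau^2 t)^{a+1}$. Carrying out the bookkeeping, the exponents of $(1+\tau^2 t)$ sum to $(\tfrac12-a)+(a+1)-2=-\tfrac12$, the powers of $\tau$ combine to $\tau^{-2a-2}\cdot\tau^2=\tau^{-2a}$, and the surviving power of $t$ is $t^{-a-1}$, so that
\begin{equation}
\int_{0}^{1} u^{a+\frac12-1}(1-u)^{-a-1}L\Bigl(\tfrac{1}{\tau^2}\bigl(\tfrac1u-1\bigr)\Bigr)\,du
=\tau^{-2a}\int_{0}^{\infty} t^{-a-1}(1+\tau^2 t)^{-1/2}L(t)\,dt.\nonumber
\end{equation}

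It then remains to show that the integral on the right-hand side converges to $\int_{0}^{\infty} t^{-a-1}L(t)\,dt=K^{-1}$ as $\tau\to 0$. For this I would invoke the Monotone (or Dominated) Convergence Theorem: for each fixed $t>0$ the factor $(1+\tau^2 t)^{-1/2}$ increases to $1$ as $\tau\downarrow 0$ and is bounded by $1$, so the integrand is dominated by the nonnegative function $t^{-a-1}L(t)$, whose integrability over $(0,\infty)$ is precisely the standing hypothesis $\int_{0}^{\infty} t^{-a-1}L(t)\,dt=K^{-1}<\infty$; this same bound also shows the right-hand integral is finite for every $\tau>0$, so the substitution is a legitimate exact change of variables throughout. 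Passing to the limit yields $\tau^{-2a}K^{-1}(1+o(1))$ with $o(1)\to 0$ as $\tau\to 0$, which is the asserted identity.

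I do not expect a genuine obstacle here; the proof is essentially one change of variables followed by one limit passage. The only point worth flagging is that $L$ is assumed merely measurable, not slowly varying, so none of the Karamata-type asymptotics of Lemmas \ref{LEM_A_1}--\ref{LEM_A_4} are available — but none are needed, because the finiteness of $\int_{0}^{\infty} t^{-a-1}L(t)\,dt$ supplies the dominating function outright. If anything counts as the ``hard part'', it is just verifying the exponent bookkeeping above carefully so that the factor left inside the integral is exactly $(1+\tau^2 t)^{-1/2}$ (which tends to $1$) and not some power of $t$ or $\tau$ that would spoil the limit.
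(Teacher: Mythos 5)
Your proposal is correct and follows essentially the same route as the paper: the substitution $t=\tau^{-2}(u^{-1}-1)$ reduces the integral to $\tau^{-2a}\int_{0}^{\infty}t^{-a-1}(1+\tau^{2}t)^{-1/2}L(t)\,dt$, and the limit is then obtained by dominated convergence with the integrable dominating function $t^{-a-1}L(t)$, exactly as in the paper's proof. Your exponent bookkeeping is accurate, so nothing further is needed.
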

\begin{proof}
 Let $J = \int_{0}^{1} u^{a+\frac{1}{2}-1}(1-u)^{-a-1}L\big(\frac{1}{\tau^2}\big(\frac{1}{u}-1\big)\big)du.$ Since the integrand in $J$ is non-negative, using the change of variable $t=\frac{1}{\tau^2}\big(\frac{1}{u}-1\big)$, one has
 \begin{equation}
  J = (\tau^2)^{-a}\int_{0}^{\infty}(1+t\tau^2)^{-\frac{1}{2}} t^{-a-1}L(t)dt.\nonumber\\
 \end{equation}
 Since $t^{-a-1}L(t)$ is assumed to be integrable, the proof follows immediately using Lebesgue's Dominated Convergence Theorem, where the $o(1)$ term is such that $\lim\limits_{\tau \rightarrow 0}o(1)=0$.
 \end{proof}
\textbf{{Proof of Theorem \ref{THM_CONCEN_INEQ_1}:}}
Fix any $\epsilon \in (0,1).$ Then by definition,
\begin{eqnarray}
  \Pr(\kappa_i < \epsilon |X_i,\tau)
   &=& \frac{\int_{0}^{\epsilon} \kappa_i^{a+\frac{1}{2}-1}(1-\kappa_i)^{-a-1}L\big(\frac{1}{\tau^2}\big(\frac{1}{\kappa_i}-1\big)\big) e^{-\frac{\kappa_i X_{i}^{2}}{2}}d\kappa_i}{\int_{0}^{1}\kappa_i^{a+\frac{1}{2}-1}(1-\kappa_i)^{-a-1}L\big(\frac{1}{\tau^2}\big(\frac{1}{\kappa_i}-1\big)\big) e^{-\frac{\kappa_i X_{i}^{2}}{2}}d\kappa_i}\nonumber\\
   &\leq& \frac{e^{\frac{X_i^2}{2}}\int_{0}^{\epsilon} \kappa_i^{a+\frac{1}{2}-1}(1-\kappa_i)^{-a-1}L\big(\frac{1}{\tau^2}\big(\frac{1}{\kappa_i}-1\big)\big)d\kappa_i}{\int_{0}^{1}\kappa_i^{a+\frac{1}{2}-1}(1-\kappa_i)^{-a-1}L\big(\frac{1}{\tau^2}\big(\frac{1}{\kappa_i}-1\big)\big) d\kappa_i}\nonumber  
 \end{eqnarray}
 which follows from the fact that $e^{-\frac{X_i^2}{2}} \leq e^{-\frac{\kappa_i X_i^2}{2}} \leq 1$ for every $\kappa_i \in (0,1)$. Now using the change of variable $t=\frac{1}{\tau^2}\big(\frac{1}{\kappa_i}-1\big)$ to the numerator of the right hand side of the above inequality and applying Lemma \ref{LEM_A_5} to the corresponding denominator, we obtain,
 \begin{equation}
 \Pr(\kappa_i < \epsilon |X_i,\tau)
  \leq Ke^{\frac{X_i^2}{2}}\bigg\{ \int_{\frac{1}{\tau^2}(\frac{1}{\epsilon}-1)}^{\infty} t^{-a-1}L(t)dt \bigg\} (1+o(1)).\nonumber
 \end{equation}
Here the $o(1)$ term is independent of $X_i$ and tends to zero as $\tau\rightarrow 0$. This completes the proof of Theorem \ref{THM_CONCEN_INEQ_1}.
\vspace{1.5mm}

\textbf{{Proof of Theorem \ref{THM_MOMENT_INEQ}:}}
  Since $L$ is slowly varying and $a \in (0,1)$, by Lemma \ref{LEM_A_3}, there exists some $A_0 > 0$ such that $L$ is locally bounded on $[A_0, \infty)$ and
  \begin{equation}\label{THM_4.2_EQ1}
   \lim_{z \rightarrow \infty}\frac{\int_{A_{0}}^{z} t^{-a}L(t)dt}{z^{1-a}L(z)} = \frac{1}{1-a}. 
  \end{equation}
  Without any loss of generality, one may assume that $A_0 \geq 1$. Now observe that from the definition of $E(1-\kappa_i \big|X_i,\tau)$, it directly follows that,
   \begin{eqnarray}\label{MOMENT_INEQ_1}
  E(1-\kappa_i \big|X_i,\tau)
   &=& \frac{\int_{0}^{1} \kappa_i^{a+\frac{1}{2}-1}(1-\kappa_i)^{-a}L\big(\frac{1}{\tau^2}\big(\frac{1}{\kappa_i}-1\big)\big) e^{-\frac{\kappa_i X_{i}^{2}}{2}}d\kappa_i}{\int_{0}^{1}\kappa_i^{a+\frac{1}{2}-1}(1-\kappa_i)^{-a-1}L\big(\frac{1}{\tau^2}\big(\frac{1}{\kappa_i}-1\big)\big) e^{-\frac{\kappa_i X_{i}^{2}}{2}}d\kappa_i}\nonumber\\
   &\leq& \frac{e^{\frac{X_i^2}{2}}\int_{0}^{1} \kappa_i^{a+\frac{1}{2}-1}(1-\kappa_i)^{-a}L\big(\frac{1}{\tau^2}\big(\frac{1}{\kappa_i}-1\big)\big)d\kappa_i}{\int_{0}^{1}\kappa_i^{a+\frac{1}{2}-1}(1-\kappa_i)^{-a-1}L\big(\frac{1}{\tau^2}\big(\frac{1}{\kappa_i}-1\big)\big) d\kappa_i}
 \end{eqnarray}
  
 Then, using the change of variable $t=\frac{1}{\tau^2}\big(\frac{1}{\kappa_i}-1\big)$ to the numerator of (\ref{MOMENT_INEQ_1}) and applying Lemma \ref{LEM_A_5} to its denominator, we obtain the following:
  \begin{eqnarray}
  E(1-\kappa_i \big|X_i,\tau)
  &\leq& \frac{e^{\frac{X_i^2}{2}} (\tau^2)^{-a+1}\int_{0}^{\infty} (1+t\tau^2)^{-\frac{3}{2}} t^{-a} L(t)dt}{K^{-1}\tau^{-2a}(1+o(1))}\nonumber\\
  &=& Ke^{\frac{X_i^2}{2}} \tau^{2}\int_{0}^{\infty} (1+t\tau^2)^{-\frac{3}{2}} t^{-a} L(t)dt(1+o(1))\nonumber\\
  &=& KJ_{\tau}e^{\frac{X_i^2}{2}}(1+o(1)), \mbox{ say,} \label{THM_4.2_EQ2}
 \end{eqnarray}
 where $J_{\tau} = \tau^2\int_{0}^{\infty} (1+t\tau^2)^{-\frac{3}{2}} t^{-a} L(t)dt$ and the $o(1)$ term in (\ref{THM_4.2_EQ2}) does not depend on $X_i$ nor on the index $i$, and tends to zero as $\tau\rightarrow 0$.\vspace{1.5mm}
 
Now we observe that for any $\tau < 1$, we can split $J_{\tau}$ as
  \begin{eqnarray}
   J_{\tau}= \bigg(\int_{0}^{A_{0}} + \int_{A_{0}}^{\frac{A_{0}}{\tau^2}} + \int_{\frac{A_{0}}{\tau^2}}^{\infty}\bigg) \frac{t\tau^2}{(1+t\tau^2)^{\frac{3}{2}}} t^{-a-1}L(t)dt = J_{1\tau}+ J_{2\tau} + J_{3\tau}, \mbox{ say} \nonumber.
   \end{eqnarray}
  First note that 
   \begin{equation} \label{J1_EQ}
   J_{1\tau} \leq \int_{0}^{A_{0}} \frac{A_{0}\tau^2}{(1+t\tau^2)^{\frac{3}{2}}} t^{-a-1}L(t)dt \leq A_o \tau^2 K^{-1}.
   \end{equation}
  Next, we have
   \begin{eqnarray}
   J_{2 \tau} 
  \leq  \frac{A_{0}^{1-a}\tau^{2a}}{1-a}L\big(\frac{A_{0}}{\tau^2}\big)(1+o(1))
   \leq \frac{A_{0}\tau^{2a}}{1-a}L(\frac{1}{\tau^2})(1+o(1)). \label{J2_EQ}
    \end{eqnarray}
  where the above inequality in (\ref{J2_EQ}) comes from using (\ref{THM_4.2_EQ1}) and then the slowly varying property of $L$, together with the fact that $A_0 \geq 1$ and $a \in (0,1)$.\vspace{1.5mm} 
  
  Finally, using Lemma \ref{LEM_A_1} and the slowly varying property of $L$, we have
    \begin{equation} \label{J3_EQ}
    J_{3\tau} \leq \frac{A_{0}^{-a}\tau^{2a}}{a}L\big(\frac{A_{0}}{\tau^2}\big)(1+o(1)) \leq \frac{A_{0}\tau^{2a}}{a}L(\frac{1}{\tau^2})(1+o(1)).
    \end{equation}
  Using (\ref{J1_EQ}), (\ref{J2_EQ}) and (\ref{J3_EQ}) it follows that
    \begin{eqnarray}
    J_{\tau} &\leq& \frac{A_{0}\tau^{2a}L(\frac{1}{\tau^2})}{a(1-a)}\bigg[K^{-1}\frac{(\frac{1}{\tau^2})^{a-1}}{L(\frac{1}{\tau^2})} + a(1+o(1))+(1-a)(1+o(1))\bigg] \nonumber\\
   &=& \frac{A_{0}\tau^{2a}L(\frac{1}{\tau^2})}{a(1-a)}(1+o(1)). \label{THM_4.2_EQ3}
  \end{eqnarray}  
 The equality in (\ref{THM_4.2_EQ3}) follows as $\lim_{\tau \rightarrow 0} (\frac{1}{\tau^2})^{a-1}/L(\frac{1}{\tau^2})=0$, which, in turn, follows from Proposition part (iii) of Lemma \ref{LEM_A_4}. Theorem \ref{THM_MOMENT_INEQ} now follows from (\ref{THM_4.2_EQ2}) and (\ref{THM_4.2_EQ3}).\vspace{1.5mm}
 
\textbf{{Proof of Theorem \ref{THM_CONCEN_INEQ_2}:}}
  Fix $\eta \in (0,1)$ and $\delta \in (0,1).$ Then by definition,
  \begin{eqnarray}
     \Pr(\kappa_i > \eta |X_i,\tau)
     &=& \frac{\int_{\eta}^{1} \kappa_i^{a+\frac{1}{2}-1}(1-\kappa_i)^{-a-1} L\big(\frac{1}{\tau^2}\big(\frac{1}{\kappa_i}-1\big)\big) e^{-\frac{\kappa_i X_{i}^{2}}{2}}d\kappa_i}{\int_{0}^{1} \kappa_i^{a+\frac{1}{2}-1}(1-\kappa_i)^{-a-1} L\big(\frac{1}{\tau^2}\big(\frac{1}{\kappa_i}-1\big)\big) e^{-\frac{\kappa_i X_{i}^{2}}{2}}d\kappa_i}\nonumber\\
    &\leq& \frac{e^{-\frac{\eta(1-\delta) X_{i}^{2}}{2}} \int_{\eta}^{1} \kappa_i^{a+\frac{1}{2}-1}(1-\kappa_i)^{-a-1} L\big(\frac{1}{\tau^2}\big(\frac{1}{\kappa_i}-1\big)\big)d\kappa_i}{\int_{0}^{\eta\delta} \kappa_i^{a+\frac{1}{2}-1}(1-\kappa_i)^{-a-1} L\big(\frac{1}{\tau^2}\big(\frac{1}{\kappa_i}-1\big)\big)d\kappa_i}.\nonumber
  \end{eqnarray}
  Now applying the change of variable $t=\frac{1}{\tau^2}\big(\frac{1}{\kappa_i}-1\big)$ to both the numerator and denominator on the right hand side of the preceding inequality, we obtain:
  \begin{eqnarray}
   \Pr(\kappa_i > \eta |X_i,\tau)
     &\leq& \frac{e^{-\frac{\eta(1-\delta) X_{i}^{2}}{2}} (\tau^2)^{-a} \int_{0}^{\frac{1}{\tau^2}\big(\frac{1}{\eta}-1\big)} (1+t\tau^2)^{-\frac{1}{2}}t^{-a-1}L(t)dt}{(\tau^2)^{-a} \int_{\frac{1}{\tau^2}\big(\frac{1}{\eta\delta}-1\big)}^{\infty} (1+t\tau^2)^{-\frac{1}{2}}t^{-a-1}L(t)dt}\nonumber\\
    &\leq& \frac{K^{-1}e^{-\frac{\eta(1-\delta) X_{i}^{2}}{2}}}{\sqrt{\frac{1-\eta\delta}{\tau^2}} \int_{\frac{1}{\tau^2}\big(\frac{1}{\eta\delta}-1\big)}^{\infty}t^{-(a+\frac{1}{2}+1)}L(t)dt }\nonumber\\
    &=& \frac{H(a,\eta,\delta)e^{-\frac{\eta(1-\delta) X_{i}^{2}}{2}}}{\tau^{2a}\Delta(\tau^2,\eta,\delta)},\nonumber
   \end{eqnarray}
  where we use the fact that $\int_{0}^{\frac{1}{\tau^2}(\frac{1}{\eta}-1)}\frac{t^{-a-1}L(t)}{\sqrt{1+t\tau^2}}dt \leq \int_{0}^{\infty} t^{-a-1}L(t)dt =K^{-1}$ and $\frac{t\tau^2}{1+t\tau^2} \geq \frac{1-\eta\delta}{\tau^2}$ for each $t \geq \frac{1}{\tau^2}\big(\frac{1}{\eta\delta}-1\big)$ as $t \mapsto t\tau^2/(1+t\tau^2)$ is an increasing function of $t$ for any fixed $\tau^2 > 0$. This completes the proof of Theorem \ref{THM_CONCEN_INEQ_2}.\vspace{1.5mm}

\textbf{{Proof of Theorem \ref{THM_T1_UB}:}}
  First note that the form of the posterior density of $\kappa_i$ given $(X_i, \tau^2)$ as a function of $(X_i, \tau^2)$ is the same for each $i=1,\ldots,m$. Also under $H_{0i}$, the distribution of $X_{i}$ does not depend on $i$. Therefore the probability~$t_{1i}= \Pr(E(1-\kappa_i|X_i,\tau) > \frac{1}{2}|H_{0i} \mbox{ is true})$ of type I error for the $i$-th test is the same for each $i$, and we denote it by $t_1$. Using Theorem \ref{THM_MOMENT_INEQ}, for any $\tau <1$, the event $\left \{ E(1-\kappa_i|X_i,\tau) > \frac{1}{2} \right \}$ implies the following event
  \begin{align}
 \left \{ X_i^2 > 2a \log (\frac{1}{\tau^2}) - 2 \log L(\frac{1}{\tau^2}) -2 \log(\frac{2 A_0 K}{a(1-a)}) -2 \log (1+o(1)) \right \},\nonumber  
  \end{align}
 where the $o(1)$ term tends to zero as $\tau \rightarrow 0$ and is independent of $X_i$. Note that $2a \log (\frac{1}{\tau^2}) \rightarrow \infty$ as $\tau \rightarrow 0$, $\log(1+o(1)) \rightarrow 0$ as $\tau \rightarrow 0$ and $2A_0K/(a(1-a))$ is a positive constant. Also, using part (i) of Lemma \ref{LEM_A_4}, $\lim_{\tau \rightarrow 0} \log(L(\frac{1}{\tau^2}))/\log(\frac{1}{\tau^2})=0.$ Therefore, we have, for all sufficiently small $\tau < 1$, 
     \begin{equation}\label{T1_BOUND}
  t_{1} \leq \Pr\bigg(X_i^2 > 2\bigg\{a\log\big(\frac{1}{\tau^{2}}\big)+\log(\frac{1}{L(\frac{1}{\tau^2})})+\log(\frac{a(1-a)}{2A_0 K})\bigg\}\bigg|H_{0i} \mbox{ is true}\bigg) (1+o(1)),
     \end{equation}
  where $\lim_{\tau \rightarrow 0} o(1) = 0$ and this term does not depend on $i$, since under $H_{0i}$, the distribution of $X_i$ is $N(0,1)$ independently of $i$. Also note that for all sufficiently small $\tau < 1$,
  \begin{equation}\label{T1_BOUND_OBS_1}
  \bigg\{a\log\big(\frac{1}{\tau^{2}}\big)+\log(\frac{1}{L(\frac{1}{\tau^2})})+\log(\frac{a(1-a)}{2A_0 K})\bigg\} > 0. 
  \end{equation}
 Using (\ref{T1_BOUND}) and (\ref{T1_BOUND_OBS_1}), and the fact that $1-\Phi(t) < \frac{\phi(t)}{t}$, for $t > 0$ we have, as $\tau \rightarrow 0$,   
  \begin{eqnarray}
       t_1 &\leq& \Pr\bigg(|Z| > \sqrt{2a\log\big(\frac{1}{\tau^{2}}\big)+2\log(\frac{1}{L(\frac{1}{\tau^2})})+2\log(\frac{a(1-a)}{2A_0K})} \bigg) (1+o(1)) \nonumber \\ 
       &\leq& 2\cdot \frac{\phi\big(\sqrt{2a\log\big(\frac{1}{\tau^{2}}\big)+2\log(\frac{1}{L(\frac{1}{\tau^2})})+2\log(\frac{a(1-a)}{2A_0K})}\big)}{\sqrt{\big(2a\log\big(\frac{1}{\tau^{2}}\big)+2\log(\frac{1}{L(\frac{1}{\tau^2})})+2\log(\frac{a(1-a)}{2A_0K})\big)}}(1+o(1)) \nonumber \\ 
       &=& \frac{1}{\sqrt{\pi a}} \cdot \frac{2 A_0 K}{a(1-a)} \cdot \frac{\tau^{2a} L(\frac{1}{\tau^2})}{\sqrt{\log(\frac{1}{\tau^2})}}(1+o(1)).\nonumber 
  \end{eqnarray}
 In the above the $o(1)$ term tends to zero as $\tau \rightarrow 0$, and clearly it does not depend on $i$. The last equality follows using the functional form of $\phi(\cdot)$ in the numerator and by getting an asymptotic expression for the denominator using the observations made earlier about the relative magnitudes of the different terms as $\tau \rightarrow 0$. Since as $m \rightarrow \infty$, $\tau =\tau_m \rightarrow 0$, all the limiting statements used in the theorem also hold when $m \rightarrow \infty$.\vspace{1.5mm}

\textbf{{Proof of Theorem \ref{THM_T2_UB}:}}    
  Let us fix any $\eta \in (0,\frac{1}{2})$ and any $\delta \in (0,1).$ Using the inequality
  \begin{equation}
   \kappa_i \leq 1\{\eta < \kappa_i \leq 1\} + \eta,\nonumber
  \end{equation}
  we get the following:
  \begin{eqnarray}\label{THM4.5_UB1}
   E(\kappa_i|X_i,\tau) \leq \Pr(\kappa_i > \eta|X_i,\tau)+ \eta.
  \end{eqnarray}
  Equation (\ref{THM4.5_UB1}) coupled with Theorem \ref{THM_CONCEN_INEQ_2}, implies that for every $X_i \in \mathbb{R}$ we have,
  \begin{eqnarray}
   \bigg\{E(\kappa_i|X_i,\tau) > \frac{1}{2}\bigg\}
   &\subseteq& \bigg\{\frac{H(a,\eta,\delta)e^{-\frac{\eta(1-\delta) X_{i}^{2}}{2}}}{\tau^{2a}\Delta(\tau^2,\eta,\delta)}> \frac{1}{2}-\eta\bigg\}.\label{THM4.5_UB2}
  \end{eqnarray}
  Using exactly similar argument as used in Theorem \ref{THM_T1_UB} about equality of $t_{1i}$ for $i=1,\ldots,m$, but now noting that under $H_{1i}$, the distribution of $X_i$ does not depend on $i$, it follows that $t_{2i}=t_2$ for some $t_2$ for $i=1,\ldots,m$, where $t_{2i}$ denotes the probability of type II error of the $i$-th test. Now observe that for sufficiently large $m$, $\tau = \tau_m < 1$ and hence $\log (\frac{1}{\tau^2}) \neq 0$. Therefore, using (\ref{THM4.5_UB2}), we have, 
  \begin{eqnarray}
   t_2 &=& \Pr\big(E(\kappa_i|X_i,\tau) > \frac{1}{2}\big |H_{1i} \mbox{ is true}\big)\nonumber\\
       &\leq& \Pr\bigg(\frac{H(a,\eta,\delta)e^{-\frac{\eta(1-\delta) X_{i}^{2}}{2}}}{\tau^{2a}\Delta(\tau^2,\eta,\delta)} > \frac{1}{2}-\eta \big |H_{1i} \mbox{ is true} \bigg)\nonumber\\
       &=& \Pr\bigg(X_i^2 < \frac{2}{\eta(1-\delta)}\bigg(a\log(\frac{1}{\tau^2})+\log(\frac{1}{\Delta(\tau^2,\eta,\delta)})+\log\big(\frac{H(a,\eta,\delta)}{\frac{1}{2}-\eta}\big) \bigg) |H_{1i} \mbox{ is true}\bigg)\nonumber \\
       &=& \Pr\bigg(X_i^2 < \frac{2a}{\eta(1-\delta)} \log(\frac{1}{\tau^2}) (1+o(1)) \big |H_{1i} \mbox{ is true}\bigg) \mbox{ as }m \rightarrow \infty.\label{T2_EQ_LAST}
             \end{eqnarray}
 To prove the equality in (\ref{T2_EQ_LAST}), we first observe that $\log (\frac{1}{\tau^2}) \rightarrow \infty$ as $m \rightarrow \infty$ and that $\log\big(\frac{H(a,\eta,\delta)}{\frac{1}{2}-\eta}\big)$ is a bounded quantity. Furthermore, one can show that
 \begin{equation}\label{THM4.5_UB3_JUSTIFICATION}
   \frac{\log(\frac{1}{\Delta(\tau^2,\eta,\delta)})}{\log(\frac{1}{\tau^2})} \rightarrow 0 \mbox{ as } \tau \rightarrow 0 \mbox{ and hence as }m \rightarrow \infty.  
  \end{equation}
 Combination of these facts prove the last equality in (\ref{T2_EQ_LAST}). For the time being let us assume (\ref{THM4.5_UB3_JUSTIFICATION}), and the proof of this will be given at the end.\vspace{1.5mm}

 To complete the proof of the theorem let us proceed as follows. Note that under $H_{1i}$, $X_i \sim N(0, 1+\psi^2)$. Therefore, by (\ref{T2_EQ_LAST}) and the fact that $\lim\limits_{m \rightarrow \infty} \frac{\psi^2}{1+\psi^2} = 1$ under Assumption \ref{ASSUMPTION_ASYMP}, we have 
\begin{equation} \label{T2_EQUATION}
  t_2 \leq  \Pr\bigg(|Z| < \sqrt{\frac{2a}{\eta(1-\delta)}}\sqrt{\frac{\log(\frac{1}{\tau^2})}{\psi^2}} (1+o(1))\bigg) \mbox{ as }m \rightarrow \infty.
\end{equation} 
Now under the assumption that $\lim_{m\rightarrow\infty} \tau/p \in(0,\infty)$, it follows that under Assumption \ref{ASSUMPTION_ASYMP}, $\log(\frac{1}{\tau^2})/\psi^2\rightarrow C \in (0,\infty)$ as $m \rightarrow \infty.$ Together with (\ref{T2_EQUATION}), this shows
\begin{eqnarray}
 t_2 &\leq& \Pr \bigg( |Z| < \sqrt{\frac{2aC}{\eta(1-\delta)}} \big(1+o(1)\big) \bigg) \mbox{ as }m \rightarrow \infty\nonumber\\
  &=& \Pr \bigg(|Z| < \sqrt{\frac{2aC}{\eta(1-\delta)}}\bigg)\big(1+o(1)\big)\mbox{ as }m \rightarrow \infty \nonumber \\
  &=& \bigg[2\Phi\bigg(\sqrt{\frac{2aC}{\eta(1-\delta)}}\bigg)-1\bigg]\big(1+o(1)\big) \mbox{ as }m \rightarrow \infty. \nonumber
\end{eqnarray}
It is clear from the proof that the $o(1)$ terms do not depend on $i$. This completes the proof of the theorem, modulo the proof of (\ref{THM4.5_UB3_JUSTIFICATION}) which is given below.\vspace{1.5mm}    

Since $L(\cdot)$ is slowly varying and $a >0,$ from Lemma \ref{LEM_A_1} it follows that, for every fixed $\eta \in (0,1)$ and $\delta \in (0,1),$ 
  \begin{eqnarray}\label{THM4.5_UB4}
   \lim_{\tau \rightarrow 0} \xi(\tau^2,\eta,\delta) = \lim_{\tau \rightarrow 0}\frac{\int_{\frac{1}{\tau^2}\big(\frac{1}{\eta\delta}-1\big)}^{\infty}t^{-(a+\frac{1}{2}+1)}L(t)dt}{(a+\frac{1}{2})^{-1}\big(\frac{1}{\tau^2}\big(\frac{1}{\eta\delta}-1\big)\big)^{-(a+\frac{1}{2})}L(\frac{1}{\tau^2}\big(\frac{1}{\eta\delta}-1\big))}= 1,
  \end{eqnarray}
  where $\xi(\tau^2,\eta,\delta)$ is defined in the statement of Theorem \ref{THM_CONCEN_INEQ_2}. Again, using Lemma \ref{LEM_A_3}, we obtain
  \begin{equation}\label{THM4.5_UB5}
   \lim_{\tau \rightarrow 0} \frac{\log{L(\frac{1}{\tau^2}\big(\frac{1}{\eta\delta}-1\big))}}{\log{\frac{1}{\tau^2}}}=0,
  \end{equation}
  for every fixed $\eta \in (0,1)$ and every fixed $\delta \in (0,1),$ since $L(\cdot)$ is slowly varying. (\ref{THM4.5_UB4}) and (\ref{THM4.5_UB5}), together with the definition of $\Delta(\tau^2,\eta,\delta)$, lead to (\ref{THM4.5_UB3_JUSTIFICATION}) immediately.\vspace{1.5mm}

\textbf{{Proof of Theorem \ref{THM_T1_LB}:}}
 By definition, the probability of type I error for the $i$-th decision in (\ref{INDUCED_DECISION}) is given by,
  \begin{equation}
   t_1 = \Pr(E(1-\kappa_i|X_i,\tau) > \frac{1}{2}|H_{0i} \mbox{ is true})\nonumber
  \end{equation}
where $t_1$ does not depend on $i$ as already shown in the proof of Theorem \ref{THM_T1_UB} before.\vspace{1.5mm}

Now, using (\ref{THM4.5_UB1}) and Theorem (\ref{THM_CONCEN_INEQ_2}), for every fixed $\eta \in (0,1/2)$ and every fixed $\delta \in (0,1)$, we have,
 \begin{equation}
   E(\kappa_i|X_i,\tau) 
  \leq \eta +  \frac{H(a,\eta,\delta)e^{-\frac{\eta(1-\delta) X_{i}^{2}}{2}}}{\tau^{2a}\Delta(\tau^2,\eta,\delta)}\nonumber
 \end{equation}
 whence it follows that
 \begin{eqnarray}\label{T1_LB_EQ1}
  \bigg\{E(1-\kappa_i|X_i,\tau) > \frac{1}{2} \bigg\}
 &\supseteq& \bigg\{\frac{H(a,\eta,\delta)e^{-\frac{\eta(1-\delta) X_{i}^{2}}{2}}}{\tau^{2a}\Delta(\tau^2,\eta,\delta)} < \frac{1}{2}-\eta \bigg\}.
 \end{eqnarray}

 Therefore, using the definition of $t_1$ and (\ref{T1_LB_EQ1}), we obtain the following:
 \begin{eqnarray}
  t_1 
  &\geq& \Pr\bigg(\frac{H(a,\eta,\delta)e^{-\frac{\eta(1-\delta) X_{i}^{2}}{2}}}{\tau^{2a}\Delta(\tau^2,\eta,\delta)} < \frac{1}{2}-\eta|H_{0i} \mbox{ is true}\bigg)\nonumber\\
  &=& \Pr\bigg(X^2_i >\frac{2}{\eta(1-\delta)}\{a\log(\frac{1}{\tau^2})+\log(\frac{1}{\Delta(\tau^2,\eta,\delta)})+ \log(\frac{H(a,\eta,\delta)}{\frac{1}{2}-\eta})\}|H_{0i} \mbox{ is true}\bigg).\nonumber
 \end{eqnarray}
 Then using (\ref{THM4.5_UB3_JUSTIFICATION}) coupled with the arguments as in the proof of Theorem \ref{THM_T1_UB}, we obtain,
 \begin{equation}
  t_1 \geq \frac{(\frac{1}{2}-\eta)/\sqrt{\pi a}}{H(a,\eta,\delta)}\cdot\frac{\tau^{\frac{2a}{\eta(1-\delta)}}}{\sqrt{\log(\frac{1}{\tau^2})}}\Delta(\tau^2,\eta,\delta)(1+o(1))\mbox{ as }m \rightarrow \infty,\nonumber 
 \end{equation}
 where the $o(1)$ term above does not depend on $i$, and tends to zero as $m\rightarrow\infty$.\vspace{1.5mm}
 
 Since $\Delta(\tau^2,\eta,\delta) \sim L(\frac{1}{\tau^2})$ as $\tau \rightarrow 0$ and hence as $m \rightarrow \infty$, the stated result follows immediately.
This completes the proof of Theorem \ref{THM_T1_LB}.\vspace{1.5mm}

\textbf{{Proof of Theorem \ref{THM_T2_LB}:}}
  By definition, the probability of type II error for the $i$-th decision in (\ref{INDUCED_DECISION}) is given by,
  \begin{equation}
   t_2 = \Pr\big(E(1-\kappa_i|X_i,\tau) \leq \frac{1}{2}\big |H_{1i} \mbox{ is true}\big)\nonumber
  \end{equation}
  where $t_2$ does not depend on $i$ as already shown in the proof of Theorem \ref{THM_T2_UB} before.\vspace{1.5mm}
  
  Note that, by our assumption, $\tau\rightarrow 0$ as $m\rightarrow\infty$. Therefore, using Theorem \ref{THM_MOMENT_INEQ}, for all sufficiently large $m$, we have,
  \begin{align}
   \bigg\{\frac{A_{0}K}{a(1-a)}e^{\frac{X_i^2}{2}} \tau^{2a}L(\frac{1}{\tau^2})(1+o(1)) \leq \frac{1}{2} \bigg\} \subseteq \bigg\{E(1-\kappa_i|X_i,\tau) \leq \frac{1}{2}\bigg\},\nonumber
  \end{align}
  where the $o(1)$ term tends to zero as $m\rightarrow\infty$, and does not depend on $i$ or $X_i$. Hence, for all such $m$,
   \begin{eqnarray}
   t_2 &=& \Pr\big(E(1-\kappa_i|X_i,\tau) \leq \frac{1}{2}\big |H_{1i} \mbox{ is true}\big)\nonumber\\
   &\geq&  \Pr\big(\frac{A_{0}K}{a(1-a)}e^{\frac{X_i^2}{2}} \tau^{2a}L(\frac{1}{\tau^2})(1+o(1)) \leq \frac{1}{2}\big |H_{1i} \mbox{ is true}\big)\nonumber\\
   &\geq&  \Pr\big(X^2_i \leq 2\big\{\log(\frac{1}{\tau^{2a}})+\log(\frac{1}{L(\frac{1}{\tau^2})})+\log(\frac{a(1-a)}{2A_{0}K})+\log(1+o(1))\} \big|H_{1i} \mbox{ is true}\big).\nonumber
   \end{eqnarray}
 Since $\lim\limits_{\tau \rightarrow 0} {\log(\frac{1}{L(\frac{1}{\tau^2})})}/{\log(\frac{1}{\tau^{2}})}=0$ and $\log(\frac{1}{\tau^{2}})\rightarrow \infty$ as $\tau \rightarrow 0$, we have, for all sufficiently large $m$,
   \begin{align}
    2\big\{\log(\frac{1}{\tau^{2a}})+\log(\frac{1}{L(\frac{1}{\tau^2})})+\log(\frac{a(1-a)}{2A_{0}K})+\log(1+o(1))\big\} &=2a\log(\frac{1}{\tau^{2}})(1+o(1)) > 0.\nonumber
   \end{align}
 
 Since $X_i \sim N(0, 1+\psi^2)$ under $H_{1i}$ and $\psi^2 \rightarrow \infty$, we have,
   \begin{eqnarray}
    t_2 &\geq& \Pr\big(X^2_i \leq 2a\log(\frac{1}{\tau^{2}})(1+o(1))\big|H_{1i} \mbox{ is true}\big)\nonumber\\
        &=& \Pr\big(|Z| \leq \sqrt{2a}\sqrt{\frac{\log(\frac{1}{\tau^{2}})}{\psi^2}}(1+o(1))\big)\nonumber\\
        &=& 2(\Phi(\sqrt{2a}\sqrt{C})-1)(1+o(1)).\nonumber
   \end{eqnarray}

The second inequality in the above chain of inequalities follows using Assumption \ref{ASSUMPTION_ASYMP} and the fact that $\lim\limits_{m\rightarrow\infty} \tau/p \in(0,\infty)$. This completes the proof of Theorem \ref{THM_T2_LB}.\vspace{1.5mm}
 
Next we move on to the proofs of Theorem \ref{THM_T1_EB_UB} and Theorem \ref{THM_T2_EB_UB}. But, before that, we would like to mention that all the $o(1)$ terms used in the proofs of these two theorems, are independent of the index $i$ and tend to zero as the number of tests $m$ goes to infinity.\vspace{1.5mm}
 
\textbf{{Proof of Theorem \ref{THM_T1_EB_UB}:}}
We fix any $c_1 \geq 2$ and $c_2 \geq 1$ in the definition of $\widehat{\tau}$ in (\ref{TAU_HAT_EMPB}). \vspace{1.5mm}


Now, using the facts $X_1|\nu_1=0 \sim N(0,1)$ and $X_1|\nu_1=1 \sim N(0,1+\psi^2)$, we have,
\begin{eqnarray}\label{THM_T1_EB_ALPHA}
\alpha_m
&=& \Pr\big(|X_1| > \sqrt{2\log m} \big) \nonumber\\
&=&2\bigg[(1-p)\Pr\big(Z > \sqrt{c_1\log m}\big)+p\Pr\big(Z > \sqrt{\frac{c_1\log m}{1+\psi^2}}\big)\bigg].
\end{eqnarray}
Observe that under the assumption $p\equiv p_m \propto m^{-\epsilon}$, $0 < \epsilon < 1$ and Assumption \ref{ASSUMPTION_ASYMP}, one has $c_1\log m/(1+\psi^2) \rightarrow c_1C/(2\epsilon)$, where the constant $C \in (0,\infty)$ has already been defined in Assumption \ref{ASSUMPTION_ASYMP}. Hence, $\Pr\big(Z > \sqrt{c_1\log m/(1+\psi^2)}\big)=\beta(1+o(1))$, where $\beta=1-\Phi(c_1C/(2\epsilon))$. Clearly, $0 < \beta < 1/2$. On the other hand, applying Mill's ratio, we have $p^{-1}\Pr\big(Z > \sqrt{c_1\log m}\big)\sim\frac{m^{-c_1/2}p^{-1}}{\sqrt{2\pi c_1\log m}} \rightarrow 0$ as $m \rightarrow \infty$. Therefore, using these observations in (\ref{THM_T1_EB_ALPHA}) we obtain $\alpha_m=2\beta p(1+o(1))$.\vspace{1.5mm}

Now, using the definition of the probability of type I error of the $i$-th empirical Bayes decision in (\ref{INDUCED_DECISION_EB}), we have,
\begin{eqnarray}\label{EB_TYPE1_1}
 \widetilde{t}_{1i}
 &=& \Pr\big(E(1-\kappa_i|X_i,\widehat{\tau}) > \frac{1}{2} \big| H_{0i} \mbox{ is true}\big)\nonumber\\
 &=& \Pr\big(E(1-\kappa_i|X_i,\widehat{\tau}) > \frac{1}{2}, \widehat{\tau} \leq 2\alpha_m \big| H_{0i} \mbox{ is true}\big) \nonumber\\
 &&+\Pr\big(E(1-\kappa_i|X_i,\widehat{\tau}) > \frac{1}{2}, \widehat{\tau} > 2\alpha_m \big| H_{0i} \mbox{ is true}\big).
\end{eqnarray}
 As noted in \cite{PKV2014}, for each fixed $x$, $E(1-\kappa_i|x,\tau)$ is non-decreasing in $\tau$. Therefore, $E(1-\kappa_i|X_i,\widehat{\tau}) \leq E(1-\kappa_i|X_i,2\alpha_m)$ whenever $\widehat{\tau} \leq 2\alpha_m$. 
 Thus, we have,
 \begin{eqnarray}\label{EB_TYPE1_2}
 &&\Pr\big(E(1-\kappa_i|X_i,\widehat{\tau}) > \frac{1}{2}, \widehat{\tau} \leq 2\alpha_m \big| H_{0i} \mbox{ is true}\big)\nonumber\\
  &\leq& \Pr\big(E(1-\kappa_i|X_i, 2\alpha_m) > \frac{1}{2}\big| H_{0i} \mbox{ is true}\big)\nonumber\\
  &\leq& B_{1}^{*}\frac{\alpha_m^{2a}L(\frac{1}{\alpha_m^2})}{\sqrt{\log (\frac{1}{\alpha_m})}}(1+o(1)) \mbox{ as } m\rightarrow\infty,
 \end{eqnarray}
 for some finite positive constant $B_{1}^{*}$, independent of $m$. The last step of the above chain of inequalities follows using the same arguments as in the proof of Theorem \ref{THM_T1_UB} and then applying the slowly varying property of $L$.\vspace{1.5mm}
 
 Let $\widehat{\tau}_1 \equiv \frac{1}{m}$ and $\widehat{\tau}_2=\frac{1}{c_2m}\sum_{j=1}^{m}1\{|X_j|> \sqrt{c_1\log m}\}$. Thus, $\widehat{\tau}=\max\{\widehat{\tau}_1,\widehat{\tau}_2\}$.\vspace{1.5mm}
 
 Now observe that, since $\alpha_m=2\beta p(1+o(1))$ and $p \propto m^{-\epsilon}$, for $0 <\epsilon<1$, we have, $1/m < 2\alpha_m$ for all large $m$. Therefore, $\Pr\big(\widehat{\tau}_1 > 2\alpha_m \big| H_{0i} \mbox{ is true}\big)=0$ for all sufficiently large $m$. Using this observation and the fact that $\{\widehat{\tau} > 2\alpha_m\}\subseteq \{\widehat{\tau}_1 > 2\alpha_m\}\bigcup \{\widehat{\tau}_2 > 2\alpha_m\} $, we obtain for all sufficiently large $m$ the following: 
 \begin{eqnarray}\label{EB_TYPE1_3} 
 &&\Pr\big(E(1-\kappa_i|X_i,\widehat{\tau}) > \frac{1}{2}, \widehat{\tau} > 2\alpha_m \big| H_{0i} \mbox{ is true}\big)\nonumber\\
 &\leq& \Pr\big(\widehat{\tau} > 2\alpha_m \big| H_{0i} \mbox{ is true}\big)\nonumber\\
 &\leq& \Pr\big(\widehat{\tau}_1 > 2\alpha_m \big| H_{0i} \mbox{ is true}\big)+\Pr\big(\widehat{\tau}_2 > 2\alpha_m \big| H_{0i} \mbox{ is true}\big)\nonumber\\
 &=& \Pr\big(\widehat{\tau}_2 > 2\alpha_m \big| H_{0i} \mbox{ is true}\big)\nonumber\\
 &\leq& \Pr\big(|X_i| > \sqrt{c_1\log m} \big| H_{0i} \mbox{ is true}\big)+\Pr\big(\widehat{\tau}_2 > 2\alpha_m, |X_i| \leq \sqrt{c_1\log m} \big| H_{0i} \mbox{ is true}\big)\nonumber\\
 &\leq& \frac{1/\sqrt{\pi}}{m^{c_1/2}\sqrt{\log m}} + \Pr\big(\widehat{\tau}_2 > 2\alpha_m, |X_i| \leq \sqrt{c_1\log m} \big| H_{0i} \mbox{ is true}\big)
 \end{eqnarray}
where we use the facts $X_i\sim N(0,1)$ under $H_{0i}$, $1-\Phi(t)< \frac{\phi(t)}{t}$ for $t>0$, and $2/c_1\leq 1$, for the last step in (\ref{EB_TYPE1_3}).\vspace{1.5mm} 

Note that $\widehat{\tau}_2 = \frac{1}{c_2m}\sum_{j(\neq i)=1}^{m}1\{|X_j|> \sqrt{c_1\log m}\}$ over the set $\{|X_i| \leq \sqrt{c_1\log m}\}$. Therefore it follows that,
\begin{eqnarray}\label{EB_TYPE1_4}
 &&\Pr\bigg(\widehat{\tau}_2 > 2\alpha_m, |X_i| \leq \sqrt{c_1\log m} \bigg| H_{0i} \mbox{ is true}\bigg)\nonumber\\
 &\leq&\Pr\bigg(\frac{1}{c_2m}\sum_{j(\neq i)=1}^{m}1\{|X_j|> \sqrt{c_1\log m}\} > 2\alpha_m\bigg| H_{0i} \mbox{ is true}\bigg)\nonumber\\
 &=&\Pr\bigg(\frac{1}{c_2m}\sum_{j(\neq i)=1}^{m}1\{|X_j|> \sqrt{c_1\log m}\} > 2\alpha_m\bigg)
\end{eqnarray}
where the last step in (\ref{EB_TYPE1_4}) follows from the fact that the distribution of the remaining $X_j$'s do not depend on that of $X_i$. Therefore we have,
\begin{eqnarray}\label{EB_TYPE1_5}
 &&\Pr\bigg(\frac{1}{c_2m}\sum_{j(\neq i)=1}^{m}1\{|X_j|> \sqrt{c_1\log m}\} > 2\alpha_m\bigg)\nonumber\\
 &=&\Pr\bigg(\frac{1}{m-1}\sum_{j(\neq i)=1}^{m}1\{|X_j|> \sqrt{c_1\log m}\} > \frac{2c_2m}{m-1}\alpha_m\bigg)\nonumber\\
 &\leq&\Pr\bigg(\frac{1}{m-1}\sum_{j(\neq i)=1}^{m}1\{|X_j|> \sqrt{c_1\log m}\} \geq 2\alpha_m\bigg) \big[\mbox{since } \frac{c_2m}{m-1} > 1 \big].
\end{eqnarray}
Note that $1\{|X_j|> \sqrt{2\log m}\} \stackrel{i.i.d.}{\sim} \mbox{Bernoulli}(\alpha_m)$ for $j \in \{1,\cdots,m\}\setminus\{i\}$. Also $0 < \alpha_m < 2\alpha_m < 1$ for all sufficiently large $m$. Therefore, applying Hoeffding's inequality (\cite{HOEFF1963}), we obtain, for all sufficiently large $m$,
\begin{eqnarray}\label{EB_TYPE1_6}
\Pr\bigg(\frac{1}{m-1}\sum_{j(\neq i)=1}^{m}1\{|X_j|> \sqrt{c_1\log m}\} > 2\alpha_m\bigg)
 &\leq& e^{-(m-1)D(2\alpha_m,\alpha_m)}
\end{eqnarray}
where $D(2\alpha_m,\alpha_m)=2\alpha_m \log 2 + (1-2\alpha_m)\log \big(\frac{1-2\alpha_m}{1-\alpha_m}\big)$.\vspace{1.5mm}

 Recall that $\log (\frac{1}{1-x})/x \rightarrow 1 $ as $ x\downarrow 0$. Therefore, since $\alpha_m \rightarrow 0$ as $m \rightarrow \infty$, one can write $D(2\alpha_m,\alpha_m)$ as,
\begin{eqnarray}\label{EB_TYPE1_7}
 D(2\alpha_m,\alpha_m)
 &=& 2\log 2\cdot \alpha_m -(1-2\alpha_m)\frac{\alpha_m}{1-\alpha_m}\big(1+o(1)\big)\nonumber\\
 &=& \big(2\log 2 -1\big)\alpha_m\big(1+o(1)\big).
 \end{eqnarray}
Since $\alpha_m \sim 2\beta p$, we have, $(m-1)D(2\alpha_m,\alpha_m)=2(2\log 2 -1)\beta mp(1+o(1))$ as $m \rightarrow \infty$. Therefore, by combining equations (\ref{EB_TYPE1_1})-(\ref{EB_TYPE1_7}), we finally obtain for each $i=1,\ldots,m$,
 \begin{eqnarray}
  \widetilde{t}_{1i}
  &\leq& B_{1}^{*}\frac{\alpha_m^{2a}L(\frac{1}{\alpha_m^2})}{\sqrt{\log (\frac{1}{\alpha_m^2})}}(1+o(1))+ \frac{1/\sqrt{\pi}}{m^{c_1/2}\sqrt{\log m}} + e^{-2(2\log 2 -1)\beta mp(1+o(1))}\nonumber
 \end{eqnarray}
provided $m$ is sufficiently large. Before we conclude our arguments, it should be noted that all the $o(1)$ terms appeared in the present proof are independent of $i$ and this will be true for any $i=1,\dots,m$. This completes the proof of Theorem \ref{THM_T1_EB_UB}.\vspace{1.5mm}


\textbf{{Proof of Theorem \ref{THM_T2_EB_UB}:}}
 We fix any $c_1 \geq 2$ and $c_2 \geq 1$ in the definition of $\widehat{\tau}$ in (\ref{TAU_HAT_EMPB}). Let us choose any fixed $\gamma \in (0,\frac{1}{c_2})$.\vspace{1.5mm}
 
 Now, by the definition of the probability of type II error of the $i$-th decision in the empirical Bayes rule (\ref{INDUCED_DECISION_EB}), we have,
 \begin{eqnarray}\label{EB_TYPE2_1}
  \widetilde{t}_{2i}
 &=& \Pr\big(E(1-\kappa_i|X_i,\widehat{\tau}) \leq \frac{1}{2} \big| H_{1i} \mbox{ is true}\big)\nonumber\\
 &=& \Pr\big(E(\kappa_i|X_i,\widehat{\tau}) \geq \frac{1}{2} \big| H_{1i} \mbox{ is true}\big)\nonumber\\
 &=& \Pr\big(E(\kappa_i|X_i,\widehat{\tau}) \geq \frac{1}{2}, \widehat{\tau} \leq \gamma\alpha_m \big| H_{1i} \mbox{ is true}\big)\nonumber\\ 
 && +\Pr\big(E(\kappa_i|X_i,\widehat{\tau}) \geq \frac{1}{2}, \widehat{\tau} > \gamma\alpha_m \big| H_{1i} \mbox{ is true}\big).
 \end{eqnarray}
 Recall that for each fixed $x \in \mathbb{R}$, $E(\kappa_i|x,\tau)$ is decreasing in $\tau$. Therefore one has $E(\kappa_i|X_i,\widehat{\tau}) \leq E(\kappa_i|X_i,\gamma\alpha_m)$ whenever $\widehat{\tau} > \gamma\alpha_m$, whence it follows that
 \begin{eqnarray}
 \bigg\{E(\kappa_i|X_i,\widehat{\tau}) \geq \frac{1}{2}, \widehat{\tau} > \gamma\alpha_m\bigg\}
 &\subseteq& \bigg\{E(\kappa_i|X_i,\gamma\alpha_m) \geq \frac{1}{2}\bigg\}.\nonumber
 \end{eqnarray}
Therefore, applying the same set of arguments used in the proof of Theorem \ref{THM_T2_UB} and using the fact that under $H_{1i}$, $X_i\sim N(0,1+\psi^2)$, we obtain for every fixed $0 <\eta<1/2$ and every fixed $0<\delta<1$, the following:
\begin{eqnarray}\label{EB_TYPE2_2}
 &&\Pr\big(E(\kappa_i|X_i,\widehat{\tau}) \geq \frac{1}{2}, \widehat{\tau} > \gamma\alpha_m \big| H_{1i} \mbox{ is true}\big)\nonumber\\
 &\leq& \Pr\big(E(\kappa_i|X_i,\gamma\alpha_m) \geq \frac{1}{2} \big| H_{1i} \mbox{ is true}\big)\nonumber\\
 &=& \Pr\bigg(|Z| \leq \sqrt{\frac{2a}{\eta(1-\delta)}}\sqrt{\frac{2\log \big(\frac{1}{\gamma\alpha_m}\big)}{1+\psi^2}}\big(1+o(1)\big)\bigg).
\end{eqnarray}
Since $\alpha_m\sim 2\beta p$, using Assumption \ref{ASSUMPTION_ASYMP} it follows that
\begin{eqnarray}\label{EB_TYPE2_3}
 \frac{2\log \big(\frac{1}{\gamma\alpha_m}\big)}{1+\psi^2}=\frac{-2\log p}{\psi^2}(1+o(1))=C(1+o(1)).
\end{eqnarray}
Therefore, from (\ref{EB_TYPE2_2}) and (\ref{EB_TYPE2_3}), for every fixed $0 <\eta<1/2$ and every fixed $0<\delta<1$,  we obtain,
\begin{eqnarray}\label{EB_TYPE2_4}
 &&\Pr\big(E(\kappa_i|X_i,\widehat{\tau}) \geq \frac{1}{2}, \widehat{\tau} > \gamma\alpha_m \big| H_{1i} \mbox{ is true}\big)\nonumber\\
 &\leq& \Pr\bigg(|Z| \leq \sqrt{\frac{2a}{\eta(1-\delta)}}\sqrt{C}\bigg)\big(1+o(1)\big)\nonumber\\
 &=& \bigg[2\Phi\bigg(\sqrt{\frac{2a}{\eta(1-\delta)}}\sqrt{C}\bigg)-1\bigg]\big(1+o(1)\big).
\end{eqnarray}
Our aim is to show now that the first term on the right hand side of (\ref{EB_TYPE2_1}) goes to 0 as $m \rightarrow \infty$. From the definition of $\widehat{\tau}$, it follows that $ \widehat{\tau} \geq \frac{1}{c_2m}\sum_{j(\neq i)=1}^{m}1\{|X_j|> \sqrt{c_1\log m}\}$. Using this observation and noting that the distribution of the remaining $X_j$'s do not depend on the distribution of $X_i$ (because of independence), we obtain the following:
%
\begin{eqnarray}
&&\Pr\big(E(\kappa_i|X_i,\widehat{\tau}) \geq \frac{1}{2}, \widehat{\tau} \leq \gamma\alpha_m \big| H_{1i} \mbox{ is true}\big)\nonumber\\
&\leq& \Pr\big(\widehat{\tau} \leq \gamma\alpha_m \big| H_{1i} \mbox{ is true}\big)\nonumber\\
&\leq&\Pr\bigg(\frac{1}{c_2m}\sum_{j(\neq i)=1}^{m}1\{|X_j|> \sqrt{c_1\log m}\} \leq \gamma\alpha_m\bigg) \nonumber\\
&=& \Pr\bigg(- \big(\frac{1}{m-1}\sum_{j(\neq i)=1}^{m}1\{|X_j|> \sqrt{c_1\log m}\} -\alpha_m\big) \geq \big(1- \frac{c_2\gamma m}{m-1}\big)\alpha_m\bigg).\nonumber
\end{eqnarray}
Note that $1- \frac{c_2\gamma m}{m-1} > 0$ for all sufficiently large $m$. Therefore, using the preceding arguments and then applying the Markov's inequality, we obtain, for all sufficiently large $m$, the following:
\begin{eqnarray}
&&\Pr\bigg(E(\kappa_i|X_i,\widehat{\tau}) \geq \frac{1}{2}, \widehat{\tau} \leq \gamma\alpha_m \big| H_{1i} \mbox{ is true}\bigg)\nonumber\\
&\leq& \Pr\bigg(- \big(\frac{1}{m-1}\sum_{j(\neq i)=1}^{m}1\{|X_j|> \sqrt{c_1\log m}\} -\alpha_m\big) \geq \big(1- \frac{c_2\gamma m}{m-1}\big)\alpha_m\bigg)\nonumber\\
&\leq& \Pr\bigg(\big|\frac{1}{m-1}\sum_{j(\neq i)=1}^{m}1\{|X_j|> \sqrt{c_1\log m}\} -\alpha_m\big| \geq \big(1- \frac{c_2\gamma m}{m-1}\big)\alpha_m\bigg)\nonumber\\
&\leq& \frac{Var\big(1\{|X_1|> \sqrt{c_1\log m}\}\big)}{(m-1)(1- \frac{c_2\gamma m}{m-1})^2\alpha_m^2}\nonumber\\
&=& \frac{(1-c_2\gamma)^{-2}(1-\alpha_m)}{m\alpha_m}\big(1+o(1)\big)\rightarrow 0 \mbox{ as } m \rightarrow \infty,\nonumber
\end{eqnarray}
whence we have
\begin{equation}\label{EB_TYPE2_5}
 \Pr\bigg(E(\kappa_i|X_i,\widehat{\tau}) \geq \frac{1}{2}, \widehat{\tau} \leq \gamma\alpha_m \bigg| H_{1i} \mbox{ is true}\bigg)=o(1)\mbox{ as } m \rightarrow \infty.
\end{equation}
Combining (\ref{EB_TYPE2_1}), (\ref{EB_TYPE2_4}) and (\ref{EB_TYPE2_5}), it therefore follows that, for each $i=1,\ldots,m$,
we have,
\begin{eqnarray}
\widetilde{t}_{2i}
 &\leq&\bigg[2\Phi\bigg(\sqrt{\frac{2aC}{\eta(1-\delta)}}\bigg)-1\bigg]\big(1+o(1)\big)\nonumber
\end{eqnarray}
for all sufficiently large $m$, where the $o(1)$ term on the right hand side of the above inequality does not depend on $i$ and this will be true for any $i=1,\ldots,m$. This completes the proof of Theorem \ref{THM_T2_EB_UB}. \vspace{1.5mm}
 
\textbf{{Proof of Theorem \ref{THM_BAYES_RISK_UBLB}}} We shall prove only the upper bound here. The corresponding proof for the lower bound will follow analogously. First recall from (\ref{BAYES_RISK_GEN}) the general form of the Bayes Risk of a multiple testing rule under our chosen loss. Now since in our case $t_{1i} =t_1$ and $t_{2i}=t_2$ for all $i=1,\ldots,m$, we have 
  \begin{eqnarray}
  R_{OG} &=& m \big((1-p)t_1+ p t_2 \big) \nonumber \\
          &=& mp \big(\frac{(1-p)}{p}t_1+ t_2). \nonumber 
   \end{eqnarray}       
 To prove the result it suffices to show that under both the situations (I) and (II), $\frac{(1-p)}{p}t_1 \rightarrow 0$ as $m \rightarrow \infty$. We first use the fact that $1-p \leq 1$. Then using the upper bound for $t_1$ obtained in Theorem \ref{THM_T1_UB}, we have
 \begin{equation} \label{TYPE1_BOUND}
 \frac{(1-p)}{p}t_1 \leq \frac{1}{\sqrt{\pi a}} \cdot \frac{2 A_0 K}{a(1-a)} \cdot \frac{\tau}{p} \bigg( \frac{1}{\tau^2}\bigg)^{-(a-1/2)} \frac{L(\frac{1}{\tau^2})}{\sqrt{\log(\frac{1}{\tau^2})}}(1+o(1)) \mbox{ as }m \rightarrow \infty.
 \end{equation}
 The proof under case (I) follows from the facts that $a >1/2$, $\lim_{m\rightarrow\infty} \tau/p \in(0,\infty)$ and hence $\lim_{m \rightarrow \infty} \tau = 0$ and by part (iii) of Lemma \ref{LEM_A_4}, $\lim_{x \rightarrow \infty} x^{-\beta} L(x) = 0$ for any $\beta > 0$ as $L(\cdot)$ is slowly varying. Proof for the case (II) is simple using (\ref{TYPE1_BOUND}).
 \vspace{1.5mm}

\textbf{{Proof of Theorem \ref{THM_BAYES_RISK_EB_UB}}}
Recall that
\begin{equation}\label{THM_3.2_UB1}
  R_{OG}^{EB} = \sum_{i=1}^m\left\{(1-p)\widetilde{t}_{1i}+ p \widetilde{t}_{2i} \right\} = p\sum_{i=1}^m \left\{\frac{1-p}{p}\widetilde{t}_{1i} + \widetilde{t}_{2i}\right\}.
   \end{equation}
For each $i$, the upper bound to $\widetilde{t}_{2i}$ as in Theorem \ref{THM_T2_EB_UB}, is independent of $i$. Therefore, for all sufficiently large $m$,
\begin{equation}\label{THM_3.2_UB2}
 \sum_{i=1}^m \widetilde{t}_{2i}
 \leq m\bigg[2\Phi\bigg(\sqrt{\frac{2aC}{\eta(1-\delta)}}\bigg)-1\bigg]\big(1+o(1)\big)
\end{equation}
Therefore, to complete the proof of the theorem, it will be enough to show
\begin{equation}\label{THM_3.2_UB3}
 \sum_{i=1}^m\frac{1-p}{p} \widetilde{t}_{1i}=o(m) \mbox{ as } m \rightarrow \infty.
\end{equation}
For this, we first note that, for each $i=1,\ldots, m$, the upper bound for $\widetilde{t}_{1i}$, as obtained in Theorem \ref{THM_T1_EB_UB}, is independent of $i$. Using this and noting that $1-p<1$, we obtain,
\begin{equation}\label{THM_3.2_UB4}
\frac{1}{m}\sum_{i=1}^m\frac{1-p}{p}\widetilde{t}_{1i} \leq B_{1}^{*}\frac{\alpha_m^{2a}L(\frac{1}{\alpha_m^2})}{p\sqrt{\log (\frac{1}{\alpha_m^2})}}(1+o(1))+ \frac{1/\sqrt{\pi}}{m^{c_1/2}p\sqrt{\log m}} + \frac{1}{p}e^{-2(2\log 2 -1)\beta mp(1+o(1))},
\end{equation}
for all sufficiently large $m$, where $B_{1}^{*}$ and $\beta$ are independent of $m$ and have already been defined in Theorem \ref{THM_T1_EB_UB}.\vspace{1.5mm}

Since $p \rightarrow 0 $ as $m\rightarrow\infty$ and $\alpha_m \sim 2\beta p $, the first term on the right hand side of (\ref{THM_3.2_UB4}) can be shown to go to zero as $m\rightarrow\infty$ under case (I), exactly as in the proof of Theorem \ref{THM_BAYES_RISK_UBLB}, whereas for case (II), it goes to zero as $m\rightarrow\infty$ using the conditions of the theorem. Also, since $p \propto m^{-\epsilon}$ for $0<\epsilon<1$, $mp \rightarrow \infty$ as $m\rightarrow \infty$ and $\log p=o(mp)$. This implies that $\frac{1/\sqrt{\pi}}{m^{c_1/2}p\sqrt{\log m}}$ and $\frac{1}{p}e^{-2(2\log 2 -1)\beta mp(1+o(1))} = e^{-2(2\log 2 -1)\beta mp(1+o(1))-\log p}$ both tend to zero as $m\rightarrow\infty$. These observations, together with (\ref{THM_3.2_UB4}), imply that (\ref{THM_3.2_UB3}) holds, which on combining with (\ref{THM_3.2_UB1}) and (\ref{THM_3.2_UB2}), completes the proof of Theorem \ref{THM_BAYES_RISK_EB_UB}.

\bibliographystyle{apalike}
\bibliography{reference_ba}

\begin{thebibliography}{}

\bibitem[Armagan et~al., 2011]{ADC2011}
Armagan, A., Dunson, D.~B., and Clyde, M. (2011).
\newblock Generalized beta mixtures of gaussians.
\newblock In J.~Shawe-Taylor, R. S.~Zemel, P. L. B. F. C. N.~P. and (eds.), K.
  Q.~W., editors, {\em Advances in Neural Information Processing Systems},
  volume~24, pages 523--531.

\bibitem[Armagan et~al., 2012]{ADL2012}
Armagan, A., Dunson, D.~B., and Lee, J. (2012).
\newblock Generalized double pareto shrinkage.
\newblock {\em Statistica Sinica}, 23(1):119--143.

\bibitem[Benjamini and Hochberg, 1995]{BH1995}
Benjamini, Y. and Hochberg, Y. (1995).
\newblock Controlling the false discovery rate: a practical and powerfull
  approach to multiple testing.
\newblock {\em Journal of the Royal Statistical Society, Series B},
  57(1):289--300.

\bibitem[Bhattacharya et~al., 2012]{BPPD2012}
Bhattacharya, A., Pati, D., Pillai, N., and Dunson, D.~B. (2012).
\newblock Bayesian shrinkage.
\newblock {\em arXiv:1212.6088v1}.

\bibitem[Bhattacharya et~al., 2014]{BPPD2014}
Bhattacharya, A., Pati, D., Pillai, N., and Dunson, D.~B. (2014).
\newblock Dirichlet-laplace priors for optimal shrinkage.
\newblock {\em arXiv:1401.5398v1}.

\bibitem[Bingham et~al., 1987]{BGT1987}
Bingham, N.~H., Goldie, C.~M., and Teugels, J.~L. (1987).
\newblock {\em Regular Variation}.
\newblock Encyclopedia of mathematics and its applications, University Press,
  Cambridge, Great Britain.

\bibitem[Bogdan et~al., 2011]{BCFG2011}
Bogdan, M., Chakrabarti, A., Frommlet, F., and Ghosh, J.~K. (2011).
\newblock Asymptotic bayes-optimality under sparsity of some multiple testing
  procedures.
\newblock {\em The Annals of Statistics}, 39(3):1551--1579.

\bibitem[Bogdan et~al., 2008]{BGT2008}
Bogdan, M., Ghosh, J.~K., and Tokdar, S.~T. (2008).
\newblock A comparison of the benjamini-hochberg procedure with some bayesian
  rules for multiple testing.
\newblock In {\em Beyond Parametrics in Interdisciplinary Research: Festschrift
  in Honor of Professor Pranab K. Sen}, volume~1, pages 211--230, Beachwood,
  OH. IMS Collections, IMS.

\bibitem[Carvalho et~al., 2009]{CPS2009}
Carvalho, C., Polson, N., and Scott, J. (2009).
\newblock Handling sparsity via the horseshoe.
\newblock {\em Journal of Machine Learning Research W\&CP}, 5:73--80.

\bibitem[Carvalho et~al., 2010]{CPS2010}
Carvalho, C., Polson, N., and Scott, J. (2010).
\newblock The horseshoe estimator for sparse signals.
\newblock {\em Biometrika}, 97(2):465--480.

\bibitem[Datta and Ghosh, 2013]{DG2013}
Datta, J. and Ghosh, J.~K. (2013).
\newblock Asymptotic properties of bayes risk for the horseshoe prior.
\newblock {\em Bayesian Analysis}, 8(1):111--132.

\bibitem[Efron, 2004]{Efron2004}
Efron, B. (2004).
\newblock Large-scale simultaneous hypothesis testing: The choice of a null
  hypothesis.
\newblock {\em Journal of the American Statistical Association},
  99(465):96--104.

\bibitem[Efron, 2008]{Efron2008}
Efron, B. (2008).
\newblock Microarrays, empirical bayes and the two-groups model.
\newblock {\em Statistical Science}, 23(1):1--22.

\bibitem[Gelman, 2006]{GEL2006}
Gelman, A. (2006).
\newblock Prior distributions for variance parameters in hierarchical models.
\newblock {\em Bayesian Analysis}, 1(3):515--533.

\bibitem[Ghosh and Chakrabarti, 2015]{GC2015}
Ghosh, P. and Chakrabarti, A. (2015).
\newblock Posterior concentration properties of a general class of shrinkage
  estimators around nearly black vectors.
\newblock {\em arXiv:1412.8161v2}.

\bibitem[Griffin and Brown, 2005]{GB2005}
Griffin, J.~E. and Brown, P.~J. (2005).
\newblock Alternative prior distributions for variable selection with very many
  more variables than observations.
\newblock Technical report, University of Warwick.

\bibitem[Griffin and Brown, 2010]{GB2010}
Griffin, J.~E. and Brown, P.~J. (2010).
\newblock Inference with normal-gamma prior distributions in regression
  problems.
\newblock {\em Bayesian Analysis}, 5(1):171--188.

\bibitem[Griffin and Brown, 2012]{GB2012}
Griffin, J.~E. and Brown, P.~J. (2012).
\newblock Structuring shrinkage: some correlated priors for regression.
\newblock {\em Biometrika}, 99(2):481--487.

\bibitem[Griffin and Brown, 2013]{GB2013}
Griffin, J.~E. and Brown, P.~J. (2013).
\newblock Some priors for sparse regression modeling.
\newblock {\em Bayesian Analysis}, 8(3):691--702.

\bibitem[Hans, 2009]{Hans2009}
Hans, C. (2009).
\newblock Bayesian lasso regression.
\newblock {\em Biometrika}, 96(4):835--845.

\bibitem[Hoeffding, 1963]{HOEFF1963}
Hoeffding, W. (1963).
\newblock Probability inequalities for sums of bounded random variables.
\newblock {\em Journal of the American Statistical Association}, 58:13--30.

\bibitem[Mitchell and Beauchamp, 1988]{MB1988}
Mitchell, T. and Beauchamp, J. (1988).
\newblock Bayesian variable selection in linera regression (with discussion).
\newblock {\em Journal of the American Statistical Association},
  83(404):1023--1036.

\bibitem[Park and Casella, 2008]{PC2008}
Park, T. and Casella, G. (2008).
\newblock The bayesian lasso.
\newblock {\em Journal of the American Statistical Association},
  103(482):681--686.

\bibitem[Pati et~al., 2014]{PBPD2014}
Pati, D., Bhattacharya, A., Pillai, N., and Dunson, D. (2014).
\newblock Posterior contraction in sparse bayesian factor models for massive
  covariance matrices.
\newblock {\em The Annals of Statistics}, 42(3):1102--1130.

\bibitem[Polson and Scott, 2011]{PS2011}
Polson, N.~G. and Scott, J.~G. (2011).
\newblock Shrink globally, act locally: Sparse bayesian regularization and
  prediction.
\newblock In {\em Bayesian Statistics 9, Proceedings of the 9th Valencia
  International Meeting}, pages 501--538. Oxford University Press.

\bibitem[Polson and Scott, 2012]{PS2012}
Polson, N.~G. and Scott, J.~G. (2012).
\newblock On the half-cauchy prior for a global scale parameter.
\newblock {\em Bayesian Analysis}, 7(2):1--16.

\bibitem[Scott and Berger, 2006]{SB2006}
Scott, J. and Berger, J.~O. (2006).
\newblock An exploration of aspects of bayesian multiple testing.
\newblock {\em Journal of Statistical Planning and Inference},
  136(7):2144--2162.

\bibitem[Scott and Berger, 2010]{SB2010}
Scott, J. and Berger, J.~O. (2010).
\newblock Bayes and empirical-bayes multiplicity adjustment in the
  variable-selection problem.
\newblock {\em The Annals of Statistics}, 38(5):2587--2619.

\bibitem[Scott, 2011]{Scott2011}
Scott, J.~G. (2011).
\newblock Bayesian estimation of intensity surfaces on the sphere via needlet
  shrinkage and selection.
\newblock {\em Bayesian Analysis}, 6(2):307--327.

\bibitem[Storey, 2007]{Storey2007}
Storey, J.~D. (2007).
\newblock The optimal discovery procedure: a new approach to simultaneous
  significance testing.
\newblock {\em Journal of the Royal Statistical Society: Series B (Statistical
  Methodology)}, 69(3):347--368.

\bibitem[Tipping, 2001]{Tipping2001}
Tipping, M. (2001).
\newblock Sparse bayesian learning and the relevance vector machine.
\newblock {\em Journal of Machine Learning Research}, 1:211--244.

\bibitem[van~der Pas et~al., 2014]{PKV2014}
van~der Pas, S.~L., Kleijn, B.~J.~K., and van~der Vaart, A.~W. (2014).
\newblock The horseshoe estimator: Posterior concentration around nearly black
  vectors.
\newblock {\em Electronic Journal of Statistics}, 8:2585--2618.

\end{thebibliography}
\paragraph{Acknowledgement}
We would like to thank Dr. Sourabh Bhattacharya of ISI Kolkata for some useful tips on Markov Chain Monte Carlo simulations. We also thank Professor Jayanta K. Ghosh for letting us aware of the recent work of \cite{PKV2014} on posterior contraction of the horseshoe prior. Research of Malay Ghosh was partially supported by National Science Foundation (NSF) Grants DMS-1007494 and SES-1026165.

\end{document}